\providecommand*{\mrm}[1]{\mathrm{#1}}
\newcommand{\diff}{\,\mrm{dx}}
\newcommand{\R}{{\mathbb R}}
\newcommand{\C}{{\mathbb C}}
\newcommand\ce[1]{\textcolor{blue}{#1}}
\newcommand\reviewerOne[1]{#1}  
\newcommand{\rot}{\displaystyle\nabla\times}  
\newcommand{\Alin}{\mathcal{L}_0}
\newcommand{\Blin}{\mathcal{L}_1}
\newcommand{\Slin}{\mathcal{S}}
\DeclareMathOperator{\arctanh}{arctanh}
\DeclareMathOperator*{\argmax}{arg\,max}
\newcommand{\dis}{\displaystyle}
\newcommand{\om}{\omega}
\newcommand{\Om}{\Omega}
\newcommand{\scatt}{Q}
\newcommand{\eps}{\epsilon} 
\newcommand{\n}[1]{{\bf #1}}
\newcommand{\fr}[2]{\displaystyle\frac{#1}{#2}}
\newcommand{\med}{\tfrac{1}{2}}
\newcommand{\mx}[1]{\displaystyle\mathbb{#1}} 
\newcommand{\mc}[1]{\mathcal{#1}} 
\newcommand{\dpar}[2]{\displaystyle\frac{\partial #1}{\partial #2}}
\newcommand{\ddpar}[2]{\displaystyle\frac{\partial^2 #1}{\partial #2^2}}
\newcommand{\dtot}[2]{\displaystyle\frac{d #1}{d #2}}
\newcommand{\fem}{{hp}} 
\newcommand{\dtn}{a} 
\newcommand{\pml}{\ell} 
\newcommand{\PML}{{P\!M\!L}}
\newcommand{\Omdtn}{I}
\newcommand{\Honedtn}{H^{1}(\Omdtn_{\dtn})}
\def\BState{\State\hskip-\ALG@thistlm}
\providecommand{\myceil}[1]{\left \lceil #1 \right \rceil } 
\providecommand{\myfloor}[1]{\left \lfloor #1 \right \rfloor } 
  \pgfplotsset{
    compat=newest,
    tick label style={font=\scriptsize},
    label style={font=\scriptsize},
    legend style={font=\scriptsize}
  }
     \renewcommand{\tikzsetnextfilename}[1]{}
\newtheorem{theorem}{Theorem}
\newtheorem{lemma}[theorem]{Lemma}
\newtheorem{definition}[theorem]{Definition}
\newtheorem{remark}[theorem]{\textit{Remark}}
\newtheorem{goal}[theorem]{Goal}
\journal{arXiv}
\begin{document}

\begin{frontmatter}

\title{Computation of scattering resonances in absorptive and dispersive media with applications to metal-dielectric nano-structures}

\author{Juan C. Araújo C.\fnref{umit}}
\fntext[umit]{Department of Mathematics and Mathematical Statistics, Umeå University, MIT-Huset, 90187 Umeå, Sweden}

\author{Carmen Campos\fnref{UPV}}
\fntext[UPV]{Departament de Sistemes Informatics i Computacio, Universitat Polit\`ecnica de Val\`encia, Camı de Vera, s/n, 46022 Valencia (Spain)}

\author{Christian Engström\fnref{lnu}}
\fntext[lnu]{Department of Mathematics, Linnaeus University, Hus B, 35195  Växjö, Sweden}

\author{Jose E. Roman\fnref{UPV}}

\begin{abstract}
In this paper we consider scattering resonance computations in optics when the resonators consist of frequency dependent and lossy materials, such as metals at optical frequencies. The proposed computational approach combines a novel $hp$-FEM strategy, based on dispersion analysis for complex frequencies, with a fast implementation of the nonlinear eigenvalue solver NLEIGS.
Numerical computations illustrate that the pre-asymptotic phase is significantly reduced compared to standard uniform $h$ and $p$ strategies. Moreover, the efficiency grows with the refractive index contrast, which makes the new strategy highly attractive for metal-dielectric structures. The $hp$-refinement strategy together with the efficient parallel code result in highly accurate approximations and short runtimes on multi processor platforms.
\end{abstract}

\begin{keyword}

Plasmon resonance  \sep Resonance modes  \sep Nonlinear eigenvalue problems \sep Helmholtz problem  \sep  PML
 \sep Dispersion analysis \sep leaky modes  \sep resonant states  \sep quasimodes  \sep quasi-normal modes  
\end{keyword}
\end{frontmatter}

\section{Introduction}\label{sec:intro} 
Metallic nano-structures play an important role in many applications in physics, including surface enhanced Raman scattering and optical antennas \cite{Schuller2010193}. Surface plasmons that may exist in these structures cause an enormous electromagnetic field enhancement near the surface of noble metals. In nanomedicine gold nanoparticles are used in the forefront of cancer research since they not only support plasmon resonances but also have excellent biocompatibility \cite{Jorgensen2016}.

The material properties of metals are characterized by the complex relative permittivity function $\epsilon$, which changes rapidly at optical frequencies $\omega$.  The most common accurate material model is then the Drude-Lorentz model
\begin{equation}
	\eps_{metal} (\om):=\eps_\infty + \sum_{j=0}^{N_p}\fr{f_j\om_p^2}{\om_j^2-\om^2-i\om\gamma_j},
	\label{eq:drude_lorentz}
\end{equation}
where $\eps_\infty\geq 1$ and $f_j$, $\om_p,\,\om_j$, $\gamma_j$ are non-negative \cite{Cessenat1996}. Hence, the Maxwell eigenvalue problem in the spectral parameter $\omega$ is nonlinear for metal-dielectric nanostructures. Research in operator 
theory for this type of non-selfadjoint operator functions is in its infancy and has been focused on photonic crystal applications \cite{Engstrom2018442}. 

In this article, we consider open systems in nano-optics, where the material properties are modeled by \eqref{eq:drude_lorentz}.  
The most common approach to characterize the optical properties of open metal-dielectric nanostructures is to solve a source problem in time-domain and search for peaks in the amplitude of the field \cite{Lesina201510481}. Another common strategy is to solve a source problem for a fixed real frequency and perform a frequency sweep in a region of interest \cite{Hoffmann2009}. These two strategies give valuable information of the structure for a given source. A highly attractive alternative that is used in this paper is to characterize the behavior of the system using scattering resonances \cite{lecture+zworski,MR1350074}. Scattering resonances are a discrete set of complex frequencies that refer to a metastable behavior (in time) of the corresponding system and the corresponding functions are called scattering modes.  Scattered waves can be expanded in terms of scattering resonances and scattering modes and replace in a sense Fourier series expansions for problems posed on non-compact domains \cite{lecture+zworski}.
Here, we assume that the scalar relative permittivity $\eps (x_1,x_2,\omega)$ is independent of the space coordinate $x_3$ but dependent on the frequency $\omega$.  Furthermore, we assume that $\eps=1$ outside a cylinder of radius $r_0$ and consider electromagnetic waves propagating in the $(x_1,x_2)$-plane. The Maxwell problem in $\R^3$ is then reduced to Helmholtz type of equations in $\R^2$ for the so called TM and TE polarizations \cite{Cessenat1996}.

Resonances are solutions to a nonlinear eigenvalue problem with a Dirichlet-to-Neumann map (DtN) on an artificial boundary \cite{lenoir92,araujo+engstrom+jarlebring+2017}. An attractive alternative is to use a perfectly matched layer (PML) \cite{kim09,gopal08}. This method was introduced for source problems in electromagnetics by Berenger \cite{Berenger1994} and it is related to complex coordinate stretching developed in quantum mechanics \cite[Chapter 16]{MR1361167}. The application of the PML method for resonance problems has the advantage that for non-dispersive refractive indices the resulting matrix eigenvalue problem is linear, and the eigenvalue problem is rational when a Drude-Lorentz model is used. Resonance computations with a dispersive refractive index are demanding since nonphysical eigenvalues may appear in the region of interest if the approximation properties of the used finite element space are not very good; See \cite{araujo+engstrom+2017} for a discussion of spurious eigenvalues in the one dimensional case. 

The linear algebra problem that must be solved in this kind of computations is a rational eigenvalue problem, a particular case of the nonlinear eigenvalue problem $T(\omega)\xi=0$. Recently, several numerical methods have been proposed to compute a few eigenvalues $\omega$ (and corresponding eigenvectors $\xi$) of large-scale nonlinear eigenvalue problems \cite{Kressner2009,Jarl2012,guttel17}. Some of these methods are available in the SLEPc library \cite{slepc05+roman}. Essentially, there are three types of methods: Newton-type methods, contour integral methods, and linearization methods. Newton-type methods rely on having a good initial guess, otherwise the iteration may converge to an eigenvalue far from the search region. Contour integral methods compute all eigenvalues contained in a prescribed region of the complex plane, but they require having a good estimate of the number of enclosed eigenvalues, and on the other hand they have a high computational cost since they require a matrix factorization at each integration point. 
In this paper, we consider a method of linearization type, namely NLEIGS, see \S\ref{sec:NLEIGS}.
Two of the authors of this paper are also developers of SLEPc and during this work a SLEPc solver that implements NLEIGS was developed and tuned using as benchmarks the challenging computational examples presented in \S\ref{sec:results}.

\section{Resonances in optical nano-structures}\label{sec:resonances}
Our aim is to compute resonances in nano-structures using accurate material models for e.g. metals at optical frequencies. This requires a permittivity function $\epsilon$ that depends on the spectral parameter $\omega$. For many metals, the real part of $\epsilon$ is negative in the optical region, which is explored in plasmonics \cite{Schuller2010193}. Below, we state well known properties for isotropic passive materials that are valid for any fixed $x\in\R^d$. Let 
\[
	\C_+:=\{z\in\C\,:\, 0\leq\,\mrm{arg}\,z<\pi,\,z\neq 0 \}.
\]
Then, $\omega\eps(\omega)\in\C_+$ for $\omega\in\C_+$, where $\eps$ never vanishes in $\bar\C_+$ \cite{Cessenat1996}. The most common material model for solid materials such as Gold, Silver, and Silica is the 
Drude-Lorentz \eqref{eq:drude_lorentz} model. This rational model of $\omega$ satisfies the stated analytical requirements and will be used in the applications part of the article. 

Assume that $\eps (x,\omega)=\eps (x_1,x_2,\omega)$ is independent of $x_3$ and consider waves propagating in the $(x_1,x_2)$-plane. The $x_3$-independent electromagnetic field  $(\n E,\n H)$ is then decomposed into transverse electric (TE) polarized waves 
$(E_1,E_2,0,0,0,H_3)$ and transverse magnetic (TM) polarized waves $(0,0,E_3,H_1,H_2,0)$ \cite{Cessenat1996}. This decomposition reduces Maxwell's equations to one scalar equation for $H_3$ and one scalar equation for $E_3$. 
The TM-polarized waves and the TE-polarized waves satisfy formally
\begin{equation}\label{eq:TMTE}
	-\Delta E_3-\omega^2\eps E_3=0 \quad \text{and}\quad -\nabla\cdot\left (\frac{1}{\eps} \nabla H_3\right )-\omega^2 H_3=0,
\end{equation}
respectively. The full vector fields $(\n E,\n H)$ are then obtained from Maxwell's equations
\begin{equation}\label{eq:auxiliar_eq}
	\n E=\fr{-1}{i\om \,\eps (\om)}\rot \n H,\quad\hbox{and}\quad  \n H=\fr{1}{i\om}\rot \n E.
\end{equation}
For simplicity, we consider first resonances in the TM-case with an $\omega$-independent permittivity function $\eps\geq 1$, where $\eps-1$ has compact support. Let $L^2_{\mrm{comp}}$ denote the space of $L^2$- functions vanishing outside 
some compact set and let $L^2_{\mrm{loc}}$ denote the space of functions that are in $L^2(K)$ for every compact subset $K$ of $\R^d$. Define the operator $A: L^2(\R^d)\rightarrow  L^2(\R^d)$ with domain $\text{dom}\, A=H^2(\R^d)$ by $Au:=-\eps^{-1}\Delta u$. The spectrum $[0,\infty )$  is then continuous \cite{MR1361167} and we denote by 
$R(\omega):  L^2(\R^d)\rightarrow  L^2(\R^d)$ the resolvent
\[
	R(\omega):=(A-\omega^2)^{-1},\quad \im \omega>0.
\]
The operator function $R$ is a meromorphic family of operators that can be extended to 
\[
	\hat R(\omega):  L^2_{\mrm{comp}}(\R^d)\rightarrow  L^2_{\mrm{loc}}(\R^d),\quad \hat R(\omega):=(A-\omega^2)^{-1},\quad \im \omega>0.
\]
The scattering resonances are then defined as the poles of the meromorphic continuation of $\hat R$ to $\C$. The functions in $ L^2_{\mrm{loc}}(\R^d)$ that correspond to a scattering resonance are called resonance modes \cite{lecture+zworski}. Note that for metal-dielectric nanostructures the operator $A$ in the TM-case is replaced with an operator function in $\omega$. In the next sections, we will describe two common approaches to compute resonances and the restriction of resonance modes to a compact subset of $\R^d$. In the following, we use the notation
\begin{equation}
\begin{array}{rcl}
	-\nabla\cdot \left( \rho \nabla u\right) - \om^2 \eta u &=& 0,\\
\end{array} \label{eq:master_eq}
\end{equation}
where $u:=E_z,\,\rho:=1,\,\eta:=\eps$ for the TM-case and $u:=H_z,\,\rho:=1/\eps,\,\eta:=1$ for the TE-case. 

\subsection{Scattering resonances in $\R$}\label{sec:resonances1D}
Resonances as discussed in section \ref{sec:resonances} can also be determined from a problem with a Dirichlet-to-Neumann (DtN) map \cite{lenoir92,schenk2011optimization}. In one space dimension the resonance problem restricted to $\Omdtn_\dtn:=(-\dtn,\dtn)$ is formally: Find a non-zero $u$ and a complex $\omega$ such that
\begin{equation}\label{eq:Helmholtz}
	-\left( \rho u'\right)' - \om^2 \eta u=0 \,\,\,\hbox{for}\,\, x\in\Omdtn_\dtn,
\end{equation}
where the  Dirichlet-to-Neumann (DtN) map at $x=\pm \dtn$ is
\begin{equation}\label{eq:formalDtN}
	u'(-\dtn)=-i \omega \, \,u(-\dtn), \quad u'(\dtn)  =i \omega \, u(\dtn).
\end{equation}
Let $\mc Z$ denote the set of values $\om$ that are zeros or poles of $\eps$ and set $\mc D:=\mx C\setminus \mc Z$. Define for $u,v\in\Honedtn$ and $\omega\in \mc D\subset\C$ the forms
\[
\hat t_{0}(\omega)[u,v] :=\int_{-\dtn}^{\dtn}\rho u'\overline{v}'\diff,\quad \hat t_{1}[u,v] :=-iu(\dtn)\overline{v}(\dtn)-i u(-\dtn)\overline{v}(-\dtn),\quad \hat t_{2}(\omega)[u,v]:=-\int_{-\dtn}^{\dtn}\eta u\overline{v}\diff .
\]
The nonlinear eigenvalue problem is then as follows: Find vectors $u\in\Honedtn\backslash\{0\}$ and  $\omega\in \mc D$ satisfying 
\begin{equation}
	t_1(\omega)[u,v]:=\omega^2 \hat t_2(\omega)[u,v]+\omega \hat t_1[u,v]+ \hat t_0(\omega)[u,v]=0
	\label{eq:1D}
\end{equation}
for all $v\in \Honedtn$.    

Let $\Omdtn_\dtn=\Omdtn_0\cup\dots\Omdtn_{N_r}$ denote a partitioning of $\Omdtn_\dtn$ and $\chi_{\Omdtn_m}$  the characteristic function of the subset $\Omdtn_m$. For material properties that are piecewise constant in $x$, we assume a permittivity function in the form
\begin{equation}
	\eps(x,\omega):=\sum_{m=0}^{N_r}\eps_m(\omega)\chi_{\Omdtn_m}(x),\quad x\in \Omdtn_\dtn,\quad \omega\in \mc D,
	\label{eq:permittivity_function}
\end{equation}
where the dependencies on $\omega$ in $\eps_m$ for $m=0,1,\dots$ are of Drude-Lorentz type \eqref{eq:drude_lorentz}. Note that \eqref{eq:1D} is a quadratic eigenvalue problem if $\eps$ is independent of $\omega$ and a rational eigenvalue problem for Drude-Lorentz type of materials.

\subsection{Scattering resonances in $\R^2$}
\begin{figure}
\centering
	\begin{tikzpicture}[thick,scale=1.0, every node/.style={scale=1.0}]	
		\tikzstyle{ann} = [fill=none,font=\large,inner sep=4pt]
		
		\draw( -4.10, -0.1) node { \includegraphics[scale=0.7,angle=0,origin=c]{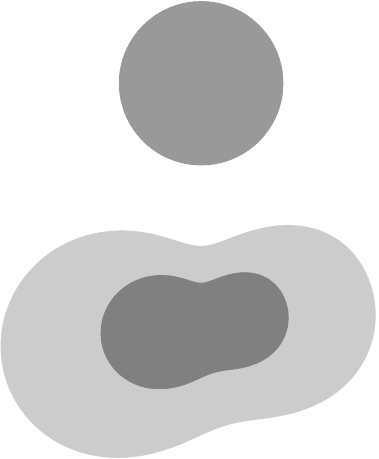} };
		
		\node[ann] at (-2.9,0.2) {$a$};
		\draw[arrows=->,line width=0.8pt](-4.0,0.0)--(-1.80,0.0);

		\draw (-4.0,0) circle (2.2cm);
		\draw (-4.0,0) circle (3.5cm);
		
		\node[ann] at ( -4.00, 2.7) {$\Omega_{\PML}$};
		\node[ann] at ( -5.03, 0.5) {$\Omega_a$};
		
		\node[ann] at ( -4.00, 0.9 ) {$\Omega_1$};
		\node[ann] at ( -5.03,-1.00) {$\Omega_2$};
		\node[ann] at ( -4.00,-0.80) {$\Omega_3$};
		
		\draw( 4.00, 0.0) node { \includegraphics[scale=0.7] {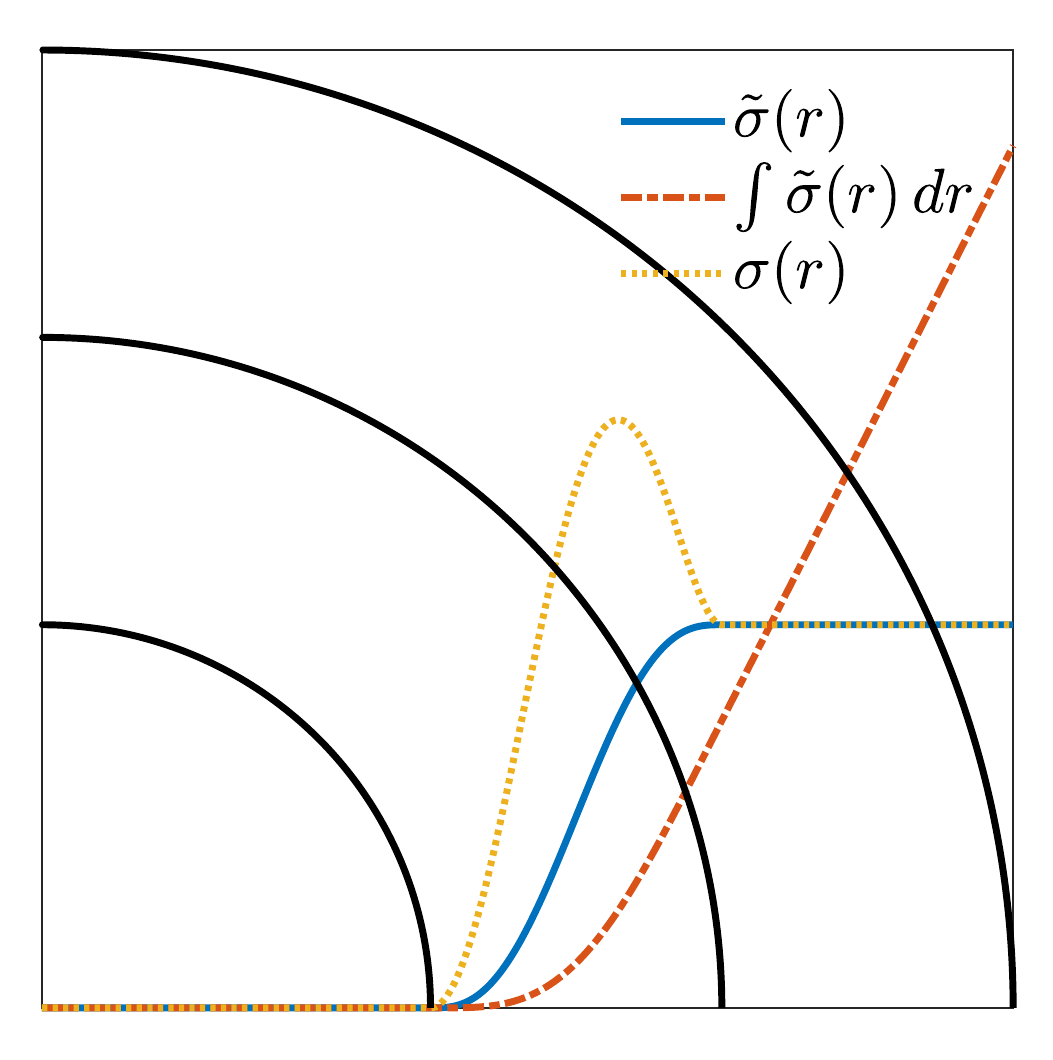} };
		
		\fill (0.56,-3.41,0.0) circle (2pt);
		\node[ann] at ( 0.56,-3.7) {$0$};
		
		\fill (3.32,-3.41,0.0) circle (2pt);
		\node[ann] at ( 3.32,-3.7) {$a$};
		
		\fill (5.40,-3.41,0.0) circle (2pt);
		\node[ann] at ( 5.40,-3.7) {$b$};
		
		\fill (7.47,-3.41,0.0) circle (2pt);
		\node[ann] at ( 7.47,-3.7) {$b+\ell$};
		
		\fill (7.47,-0.68,0.0) circle (2pt);
		\node[ann] at (7.80,-0.70) {$\sigma_0$};
		
		\fill (7.47,0.75,0.0) circle (2pt);
		\node[ann] at (7.90, 0.75) {$\sigma_M$};
	
	\end{tikzpicture}	
	\caption{\emph{Left) Arbitrary configuration of resonators. Right) 
	PML stretching function. }}\label{fig:domain_pml}
\end{figure}

Resonances in $\R^2$ can also be approximated by \eqref{eq:master_eq} with a DtN-map on an artificial boundary \cite{lenoir92,araujo+engstrom+jarlebring+2017}. However, the nonlinearity in the DtN-map is more complicated in dimensions larger than one. Then, an attractive alternative to the DtN-map is a complex coordinate stretching technique called Perfectly Matched Layers (PML). This approach does not add any non-linearity to the problem. Hence, in our setting we will obtain a rational eigenvalue problem. Approximation of resonances using a radial PML was analyzed in \cite{kim09} and we consider the truncation of the infinite PML problem to the disc $\Om$ in $\R^2$.

Let $\Omega_a$ denote a disk of radius $a$, and let $\Omega_1,\,\Omega_2,...,\Omega_{N_r}$ denote the subsets of $\Om_a$ corresponding to the resonators. Set $\Om_r:=\cup_{i=1}^{N}\Om_i$, $\Om_0:=\Om_a\setminus\Om_r$ and attached to $\Om_a$ an outer layer $\Om_\PML$.  Then, the computational domain is the disc $\Om:=\Om_a\cup\Om_\PML$ as illustrated in Fig. \ref{fig:domain_pml}.

We define the complex stretching functions in polar coordinates $(r,\theta)$, similarly as presented in \cite{lassas1998b,kim09}:
\begin{equation}
	\tilde \sigma(r):=\left\{
		\begin{array}{ll}
			0,& \hbox{if}\,\, r < a \\
			P(r),& \hbox{if}\,\, a \leq r \leq b \\
			\sigma_0,& \hbox{if}\,\, r > b 
		\end{array}
	\right.\!\!\!\!,\quad
	\begin{array}{lllll}
			\tilde \alpha(r)  \!\!\!& := 1+i\tilde \sigma(r), & \, &
			\tilde r(r) \!\!\!& := (1+i\tilde \sigma)r=\tilde \alpha(r)\,r, \\[2mm]
			\sigma(r) \!\!\!& := \tilde \sigma(r)+r\dpar{\tilde \sigma}{r}, & \, &
			\alpha(r) \!\!\!& := \dpar{\tilde r}{r}=1+i\sigma(r),
		\end{array}
		\label{eq:sig_pol}
\end{equation}
where the polynomial $P(r)$ is required to be increasing in $[a,b]$. 
Moreover, $\tilde\sigma(r)\in C^{2}(0,b+\pml)$, and $\sigma(r) = \partial (r\tilde \sigma)/\partial r$. For this we introduce the fifth order polynomial $P(r)$ satisfying
$P(a)=P'(a)=P''(a)=P'(b)=P''(b)=0$ and $P(b)=\sigma_0$.

From the given curved coordinate representation, we transform to Cartesian coordinates and define 
 $\mc A\in C^2(\Om)^{2\times 2}$ and $\mc B\in C^2(\Om)$ by
\begin{equation}
		\mc A:=\left(
		\begin{array}{cc}
		\tfrac{\tilde\alpha}{\alpha}\cos^2\theta+\tfrac{\alpha}{\tilde\alpha}\sin^2\theta &
		\bigg(\tfrac{\tilde\alpha}{\alpha} -\tfrac{\alpha}{\tilde\alpha}\bigg)\sin\theta\cos\theta \\ 
		\bigg(\tfrac{\tilde\alpha}{\alpha} -\tfrac{\alpha}{\tilde\alpha}\bigg)\sin\theta\cos\theta &
		\tfrac{\tilde\alpha}{\alpha}\sin^2\theta+\tfrac{\alpha}{\tilde\alpha}\cos^2\theta
		\end{array}
		\right),\quad 
		\mc B:=\alpha \tilde\alpha.
		\label{eq:def_pml_r}
\end{equation}
The PML coefficients are illustrated in Fig. \ref{fig:domain_pml}, and it can be seen that $\mc A$ and $\mc B$ are identities for $r\leq a$.

Let $(\cdot,\cdot)_{\Om_j}$ denote the inner product in $L^2(\Om_j)$. The nonlinear eigenvalue problem is then:
Find $u\in H_0^1(\Om)\setminus \{0\}$ and $\om  \in\mc D$ such that for all $v\in H_0^1(\Om)$
\begin{equation}
	t_2(\om)[u,v]=0,
\label{eq:eig_prob2D}
\end{equation}	
where $t_2(\om)[u,v]:=\tilde t_0(\om)[u,v]+\tilde t_1(\om)[u,v]$ with
\[
	\tilde t_0(\om)[u,v]:=(\rho\nabla u,\nabla v)_{\Om_a}-\om ^2 (\eta u,v)_{\Om_a},\quad \tilde t_1(\om)[u,v]:=(\mc A\nabla u,\nabla v)_{\Om_\PML}-\om ^2(\mc B u,v)_{\Om_\PML}.	
\]	   

\section{A-priori based $hp$-FEM for eigenvalue problems}\label{sec:dispersion}
It is well known that the accuracy of a finite element approximation of the Helmholtz problem $-\Delta u -\om^2u=f$ deteriorates with increasing frequency $\omega$. A major problem is that the discrete frequency of the FE solution is different from the frequency of the exact solution. This effect called pollution has been studied intensively. Particularly, for a uniform mesh size $h$ the asymptotic error estimates for linear elements \cite[Sec. 4.4.3]{ihl98} yield the condition $\om^2 h<1$, which for large $\om$ results in prohibitively expensive meshes. However, the dispersion analysis \cite{ihl98,thompson94,Ainsworth04} yields pre-asymptotic estimates of the form $\om h<1$, which is a significant improvement. Moreover, it was realized that higher order elements are advantageous to reduce the pollution effect.

A-posteriori estimators are powerful tools when the pollution is negligible, but in the presence of pollution the error in the solution is typically underestimated \cite{MR3094621}. 
For Helmholtz equation with $\omega>1$ and FE of order $p$ the conditions $p=O(log(\om))$ and $\om h/p=O(1)$ are sufficient for accurate a posteriori error estimation \cite{MR3094621}. Recently, error estimates that are explicit in the eigenvalue $\om^2$ have also been developed \cite{Sauter10}. However, the minimal dimension of a finite element space such that the relative eigenfunction error is below $100\%$ is unknown even in the selfadjoint case with analytic coefficients (see \cite[Remark 6.1]{Sauter10}).

\reviewerOne{Our aims are (i) to extend the dispersion analysis in \cite{Ainsworth04} to the case with a complex frequency $\omega$, (ii) 
 
to propose an a-priori $hp$-strategy for non-selfadjoint eigenvalue problems with piecewise constant coefficients based on an element-wise application of (i).}
The a-priori strategy for enriching the finite element space developed in this paper can in principle also be combined with an a-posteriori based strategy such as \cite{Giani2016,MR3529053}. 

\subsection {Numerical dispersion for a real frequency $\omega$}\label{sec:err1D}

The case with a real frequency $\omega$ and constant coefficients has been studied extensively and \cite{ihl98,thompson94,Ainsworth04,chau15} derived explicit estimates depending only on $\om,\,h$, and $p$. In this subsection we review those results and consider in the following subsection extensions to complex $\om$.

In the one dimensional setting, the normalized (wave speed $c=1$) homogeneous wave equation reads
\begin{equation}
	\ddpar{w}{t}-\ddpar{w}{x}=0.
	\label{eq:wave1d}
\end{equation}
The general solution of the wave equation can be expressed as the superposition 
\begin{equation}
	w(x,t)=\int_{-\infty}^\infty \left[a(k)e^{i(kx+\om t)}+b(k)e^{i(kx-\om t)}\right]dk,
	\label{eq:superp1d}
\end{equation}
for some functions $a$ and $b$.
The frequency $\om$ and the wave number $k$ are in this case related by the exact dispersion relation $w^2=k^2$.

We now turn into the numerical solution of \eqref{eq:wave1d}, where the \emph{discrete wave number} 
$k_\fem$ is a FE approximation to $k$. Let $\{x_j\},\,j\in \mx Z$
be a uniform distribution of points on $\R$, with mesh size $h:=x_{j+1}-x_j$, and let $\varphi_j$ be the nodal shape functions of polynomial degree $p$.
Then, semi-discrete solutions are written in the form $w_\fem (x,t)=u_\fem(x)e^{-i\om t}$, and at nodal values the FE space representation becomes $u_\fem(x_j)=\sum_j \xi_j \varphi_j(x_j)$. By analogy with \eqref{eq:superp1d}, we search for solutions of the form
$w_\fem(x_j,t)=b_\fem (k_\fem) e^{i(k_\fem x_j-\om t)}$, which implies $\xi_j=b_\fem e^{ijk_\fem h}$.

The variational formulation of the problem is then: 
Find $u_\fem\in V_\fem\subset H^1 (\R)$ such that
\begin{equation}
	B_\om(u_\fem,v_\fem):=(u_\fem',v_\fem')-\om^2 (u_\fem,v_\fem)=0,\quad (u,v):=\int_{\R} u\bar v\,dx,
	\label{eq:var_lag1d}
\end{equation}
for all $v_\fem\in V_\fem$. 
The explicit form of $\varphi_j$ in \eqref{eq:var_lag1d} leads to a discrete dispersion relation of the form $\cos(k_\fem h)=R_p(\om h)$, where the \emph{numerical cosine} $R_p(\om h)$ consists of rational terms involving $\om,\,h$, and $p$; see \cite{ihl98,thompson94,Ainsworth04} for further details.
The dispersive error for \eqref{eq:wave1d} is defined as $\mc E^p:=R_p(\om h)-\cos(\om h)$, from where \emph{dispersion analysis} refers to studying the convergence of $|\mc E^p|$ with respect to $\om, h$ and $p$. The outcome of the analysis is that $\mc E^p$ is an excellent measurement of the finite element space approximative properties for wave problems as motivated by \cite{ihl98,thompson94,Ainsworth04}. 

We use the following notations: $\kappa=\om h/2$, $N_e=\myfloor{p/2}$, $N_o=\myfloor{(p+1)/2}$, where $\myfloor{x}$ stands for the integer part of $x$. Ainsworth \cite[Sec. 4]{Ainsworth04} proved that when $\om\in\R$, the function $\mc E^p$ can be written in the form
\begin{equation}
\begin{array}{l}
\mc E^p(\om h)=\fr{\sin \om h}{\om h}\left\{
\mc E^p_o \sin^2\left(\fr{\om h}{2}\right)+\mc E^p_e \cos^2\left(\fr{\om h}{2}\right)
\right\}
\left\{
1+\fr{\sin \om h}{2\om h}(\mc E^p_o-\mc E^p_e)
\right\}^{-1}, \\[3mm]
\mc E_e^p(\kappa) \fr{\cos^2 (\om h/2)}{\om h}=-Q_{2N_e+3/2}(\kappa)\{1-Q_{2N_e+3/2}(\kappa)\tan \kappa\}^{-1}, \\[3mm]
\mc E_o^p(\kappa) \fr{\sin^2 (\om h/2)}{\om h}=-Q_{2N_o+1/2}(\kappa)\{1+Q_{2N_o+1/2}(\kappa)\cot \kappa\}^{-1},
\end{array}
\label{eq:Ep}
\end{equation}
with
\begin{equation}
Q_m(\kappa):=\fr{J_m(\kappa)}{Y_m(\kappa)}, \,\, m=\hbox{integer}+\fr{1}{2}.
\label{eq:Qm}
\end{equation} 
It was shown \cite[Theorem 3.3]{Ainsworth04} that the error $\mc E^p$ for real $\omega$ passes through three phases as the order $p$ is increased: An oscillatory phase, a transition zone, and finally superexponential decay of $\mc E^p$. 
In the remaining of the section we consider numerical dispersion analysis for $hp$-FEM computations of Helmholtz type of problems with a complex frequency $\om$. 

\reviewerOne{This is of interest since scattering resonances are complex and the results in Section \ref{sec:description_lag} are the base for the $hp$-FEM strategy proposed in Section \ref{sec:con_assum}.}

\subsection{Numerical dispersion for a complex frequency $\omega$}\label{sec:description_lag}
First, we show that the results in \cite{ihl98,Ainsworth04} 
can be extended from $\om\in\R$ to a region in the complex plane. This extension requires that several issues are addressed. Namely, that the  expressions can be analytically continued to the complex plane, and the identification of possible branch cuts and poles of the different expressions involved when deriving the estimates in \cite{Ainsworth04}. We rely on the results in \cite{olver_DLMF,olver54,olver_new}, where many of the subtleties of working with Bessel functions of complex argument are addressed.

It can be verified that \eqref{eq:Ep} also holds for $\om \in \mx C$ with $|\arg\om|<\pi$. First, by introducing standard sesquilinear forms and following the derivations in \cite{Ainsworth04}. Particularly, equation \cite[(4.12)]{Ainsworth04} is reached by using \cite[(8.461), (8.465)]{grad07}, which in turn hold for complex arguments. Note that in the case $\om\in \mx R$, the subscripts $o,\,e$ in \eqref{eq:Ep} are reserved for \emph{odd}, and \emph{even}, respectively. 

\subsubsection{Numerical dispersion analysis for small $|\om h|$}\label{sec:small_wh}
In this subsection, we consider $\mc E^p$ for small $|\om h|$ and address the case with large $|\om h|$ in the next section. In the procedure we need the following lemma.
\begin{lemma}\label{lemmaQ1} Let $m=n+1/2$ for $n\in\mx Z$ and define $Q_m$ as in \eqref{eq:Qm}. Then,
\begin{equation}\label{eq:Qexp}
Q_m(\kappa)=-\fr{1}{2}\left[\fr{(m-\med)!}{(2m-1)!}\right]^2\fr{(2\kappa)^{2m}}{2m}+\cdots,
\end{equation}
for all $\kappa\in\C$ with $|\kappa| \ll 1$ and $|\arg \kappa| <\pi$.
\end{lemma}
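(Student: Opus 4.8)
The plan is to reduce the statement to the classical small-argument expansions of the Bessel functions $J_m$ and $Y_m$ for half-integer order $m=n+1/2$, which are valid for complex argument with $|\arg\kappa|<\pi$ (see \cite{olver_DLMF}), and then take the quotient. First I would recall that for $m$ not a negative integer,
\[
J_m(\kappa)=\frac{(\kappa/2)^{m}}{\Gamma(m+1)}\Bigl(1+O(\kappa^2)\Bigr),
\]
and that for half-integer order the second solution $Y_m$ is, up to sign, the companion $J_{-m}$; more precisely $Y_{n+1/2}(\kappa)=(-1)^{n+1}J_{-n-1/2}(\kappa)$. Hence
\[
Y_m(\kappa)=(-1)^{n+1}\frac{(\kappa/2)^{-m}}{\Gamma(1-m)}\Bigl(1+O(\kappa^2)\Bigr).
\]
Both expansions converge for all $\kappa\neq 0$ and, with the principal branch of $\kappa^{\pm m}$, extend analytically to $|\arg\kappa|<\pi$, so no branch-cut issue arises beyond fixing that cut. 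Dividing, the $O(\kappa^2)$ factors combine into $1+O(\kappa^2)$, the powers of $\kappa/2$ combine to $(\kappa/2)^{2m}$, and we are left with
\[
Q_m(\kappa)=\frac{J_m(\kappa)}{Y_m(\kappa)}=(-1)^{n+1}\,\frac{\Gamma(1-m)}{\Gamma(m+1)}\Bigl(\frac{\kappa}{2}\Bigr)^{2m}\bigl(1+O(\kappa^2)\bigr).
\]

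The remaining work is purely the identification of the constant. Writing $m=n+1/2$, I would use $\Gamma(m+1)=\Gamma(n+3/2)$ and $\Gamma(1-m)=\Gamma(1/2-n)$, and convert these into the factorial-type notation of the statement via $\Gamma(n+3/2)=(n+1/2)!$ in the paper's convention, together with the reflection/duplication identities to relate $\Gamma(1/2-n)$ to $\Gamma(n+1/2)$: indeed $\Gamma(1/2-n)\Gamma(1/2+n)=\pi/\cos(n\pi)=(-1)^n\pi$, so $\Gamma(1/2-n)=(-1)^n\pi/\Gamma(1/2+n)$. Then the Legendre duplication formula $\Gamma(2m)=\frac{2^{2m-1}}{\sqrt\pi}\Gamma(m)\Gamma(m+1/2)$ with $2m=2n+1$ lets me replace the product $\Gamma(m-1/2)\Gamma(m)$-type expressions by $(2m-1)!$, producing the bracketed square $\bigl[(m-1/2)!/(2m-1)!\bigr]^2$ and the explicit prefactor $-\tfrac12\cdot\tfrac{1}{2m}$ after the sign $(-1)^{n+1}$ is absorbed. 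A quick sanity check at $n=0$, i.e. $m=1/2$, where $J_{1/2}(\kappa)=\sqrt{2/(\pi\kappa)}\sin\kappa$ and $Y_{1/2}(\kappa)=-\sqrt{2/(\pi\kappa)}\cos\kappa$, gives $Q_{1/2}(\kappa)=-\tan\kappa=-\kappa+O(\kappa^3)$, which matches the claimed leading term $-\tfrac12\cdot 1\cdot\tfrac{(2\kappa)^{1}}{1}=-\kappa$; this confirms the normalization.

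The main obstacle is bookkeeping rather than conceptual: juggling the several equivalent $\Gamma$-function identities (reflection, duplication, and the translation into the paper's half-integer ``factorial'' notation) so that the constant collapses to exactly $-\tfrac12\bigl[(m-1/2)!/(2m-1)!\bigr]^2/(2m)$ without a stray factor of $\pi$, $2$, or sign. I would therefore carry the constant symbolically through the duplication formula once and for all, verify it at $n=0$ and $n=1$, and only then state the general term; the higher-order corrections are subsumed in ``$\cdots$'' and need no further analysis beyond noting analyticity of $Q_m$ near $\kappa=0$ once the common factor $(\kappa/2)^{2m}$ is divided out, which is immediate from the ratio of two convergent power series whose denominator has nonzero constant term after that division.
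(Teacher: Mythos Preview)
Your proposal is correct and follows essentially the same route as the paper's proof: expand $J_m$ by its power series near the origin, use the half-integer identity $Y_{n+1/2}(\kappa)=(-1)^{n-1}J_{-n-1/2}(\kappa)$ (your sign $(-1)^{n+1}$ is the same), take the quotient, and simplify the resulting $\Gamma$-function constant. The paper merely cites ``properties of the $\Gamma$ function'' for the last step, whereas you spell out the reflection and duplication identities and verify the normalization at $n=0$; this added bookkeeping is helpful but not a different method.
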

\begin{proof} We follow the steps in the proof of \cite[Lemma A1]{Ainsworth04}, which is based on the representation formulas in \cite[(8.440)]{grad07}. Those formulas are under the assumption $|\arg \kappa| <\pi$, also valid for complex $\kappa$. Since, in addition $|\kappa| \ll 1$, the representation formulas \cite[(8.440), (8.465-1) ]{grad07} hold:
\begin{equation}
\begin{array}{l}
J_\nu(\kappa)=\left( \fr{\kappa}{2} \right)^\nu\dis \sum_{k=0}^{\infty} \fr{(-1)^k}{k!\,\Gamma(\nu+k+1)}
\left( \fr{\kappa}{2} \right)^{2k}, \\[5mm]
Y_{n+1/2}(\kappa)=(-1)^{n-1}J_{-n-1/2}(\kappa), \qquad |\arg \kappa| <\pi.
\end{array}
\end{equation}
By retaining only the first term in this series and using properties of the $\Gamma$ function, we obtain
\eqref{eq:Qexp}.
\end{proof}

The following theorem extends \cite[Thm 3.2]{Ainsworth04} to complex frequencies $\om$.
\begin{theorem}\label{thm:low_frequency} Let $p\in\mx N$, and $|\om h|\ll 1$. The discrete dispersion relation $\mc E^p$ is then
\begin{equation}%
	R_p(\om h)-\cos (\om h)=\frac{1}{2} \left[ \fr{p!}{(2p)!} \right]^2 \fr{(\om h)^{2p+1}}{2p+1}+\mc O(\om h)^{2p+4}.
	\label{eq:small_wh}
\end{equation}
\end{theorem}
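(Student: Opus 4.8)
The plan is to read off \eqref{eq:small_wh} from the closed form \eqref{eq:Ep} --- which, as recorded above, remains valid for $\om\in\C$ with $|\arg\om|<\pi$ --- together with the small-argument expansion of $Q_m$ furnished by Lemma \ref{lemmaQ1}. Throughout, set $\kappa=\om h/2$ and work in the punctured disc $0<|\om h|\ll1$. Since \eqref{eq:Qexp} gives $Q_m(\kappa)=\mc O(\kappa^{2m})$, the two ingredients in \eqref{eq:Ep} obey $Q_{2N_e+3/2}(\kappa)=\mc O(\kappa^{4N_e+3})$ and $Q_{2N_o+1/2}(\kappa)=\mc O(\kappa^{4N_o+1})$. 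The first step is the parity bookkeeping for $N_e=\myfloor{p/2}$ and $N_o=\myfloor{(p+1)/2}$: when $p$ is even, $4N_o+1=2p+1<2p+3=4N_e+3$, while when $p$ is odd, $4N_e+3=2p+1<2p+3=4N_o+1$. Hence in either case exactly one of the two half-integer orders $2N_e+3/2$, $2N_o+1/2$ equals $p+\tfrac12$, and the corresponding $Q_m$ is the unique term of minimal order $\kappa^{2p+1}$; it is this term that produces the leading contribution, and the identities $m-\tfrac12=p$, $2m-1=2p$ in \eqref{eq:Qexp} yield the coefficient $\tfrac12\,[p!/(2p)!]^2/(2p+1)$ independently of the parity of $p$.

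The second step is to expand the remaining factors of \eqref{eq:Ep} about $\kappa=0$ and check that none of them perturbs this leading behaviour. For $p\ge1$ one has $N_o\ge1$, so $Q_{2N_o+1/2}(\kappa)\cot\kappa=\mc O(\kappa^{4N_o})$ and $Q_{2N_e+3/2}(\kappa)\tan\kappa=\mc O(\kappa^{4N_e+4})$ are both $\mc O(\kappa^2)$, whence the two inner correction brackets in \eqref{eq:Ep} are $1+\mc O(\kappa^2)$; likewise $\sin(\om h)/(\om h)=1+\mc O((\om h)^2)$, $\cos^2(\om h/2)=1+\mc O((\om h)^2)$ and $\sin^2(\om h/2)=\kappa^2(1+\mc O(\kappa^2))$, and --- using the two sub-relations of \eqref{eq:Ep} to note that $\mc E^p_o$ and $\mc E^p_e$ are $\mc O((\om h)^{2p})$ --- the outer bracket $\{1+\tfrac{\sin\om h}{2\om h}(\mc E^p_o-\mc E^p_e)\}^{-1}$ is $1+\mc O((\om h)^2)$ too. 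Substituting the two sub-relations of \eqref{eq:Ep} into its first line then reduces $\mc E^p$, to leading order, to $-\sin(\om h)\bigl(Q_{2N_o+1/2}(\kappa)+Q_{2N_e+3/2}(\kappa)\bigr)\bigl(1+\mc O((\om h)^2)\bigr)$. Replacing the dominant $Q_m$ by its leading term from \eqref{eq:Qexp}, discarding the subdominant $Q$ (which is $\mc O((\om h)^{2p+3})$ and hence, once multiplied by $\sin(\om h)$, falls into the remainder), and using $\sin(\om h)=\om h+\mc O((\om h)^3)$ delivers the leading term of \eqref{eq:small_wh}; because the $Q_m$ series, the $\tan/\cot$ brackets and the $\sin/\cos$ expansions all advance in powers of $(\om h)^2$, each correction lies one such power below the leading term, which gives the remainder $\mc O(\om h)^{2p+4}$.

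The complex-argument difficulties are already absorbed into the two tools we invoke: \eqref{eq:Ep} was carried off the real axis by rerunning Ainsworth's derivation with sesquilinear forms and the complex-valid identities of \cite{grad07}, and Lemma \ref{lemmaQ1} was proved for $|\arg\kappa|<\pi$; no new branch cut or pole appears, since for $0<|\om h|\ll1$ the functions $Q_{2N_e+3/2}$, $Q_{2N_o+1/2}$ are analytic and the denominators in \eqref{eq:Ep} stay bounded away from zero. Thus the only substantive work is the asymptotic bookkeeping, and the step I expect to be most delicate is carrying the parity case split together with the three independent sources of $\mc O((\om h)^2)$ corrections --- from the $Q_m$ tail, from the $\tan/\cot$ factors, and from the trigonometric Taylor expansions --- so that the clean gap down to $\mc O(\om h)^{2p+4}$ in \eqref{eq:small_wh} is genuinely established and not merely plausible.
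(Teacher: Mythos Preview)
Your approach is essentially the same as the paper's: both insert the small-argument expansion of $Q_m$ from Lemma~\ref{lemmaQ1} into the closed form \eqref{eq:Ep} and then carry out the asymptotic bookkeeping of Ainsworth's proof of \cite[Thm~3.2]{Ainsworth04}, the only novelty being that Lemma~\ref{lemmaQ1} has already been established for complex $\kappa$ with $|\arg\kappa|<\pi$. The paper compresses this into a one-line pointer to Ainsworth, whereas you write out the parity split $N_e,N_o$, identify the dominant half-integer order $m=p+\tfrac12$, and track the $\mc O((\om h)^2)$ corrections from the trigonometric factors and the $\tan/\cot$ brackets explicitly; the underlying argument is identical.
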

\begin{proof} 
Lemma \ref{lemmaQ1} is stated for complex $\kappa$, with $|\arg \kappa| <\pi$, then by plugging \eqref{eq:Qexp} into \eqref{eq:Ep}, the result follows from the discussion in the proof of \cite[Thm 3.2]{Ainsworth04}.
\end{proof}

\subsubsection{Numerical dispersion analysis for large $|\om h|$}\label{sec:large_wh}
\begin{figure}
\centering
	\begin{tikzpicture}[thick,scale=1.0, every node/.style={scale=1.0}]	
		\tikzstyle{ann} = [fill=none,font=\large,inner sep=4pt]
		
		\draw( -0.18, 0.03) node { \includegraphics[scale=0.484,angle=0,origin=c]{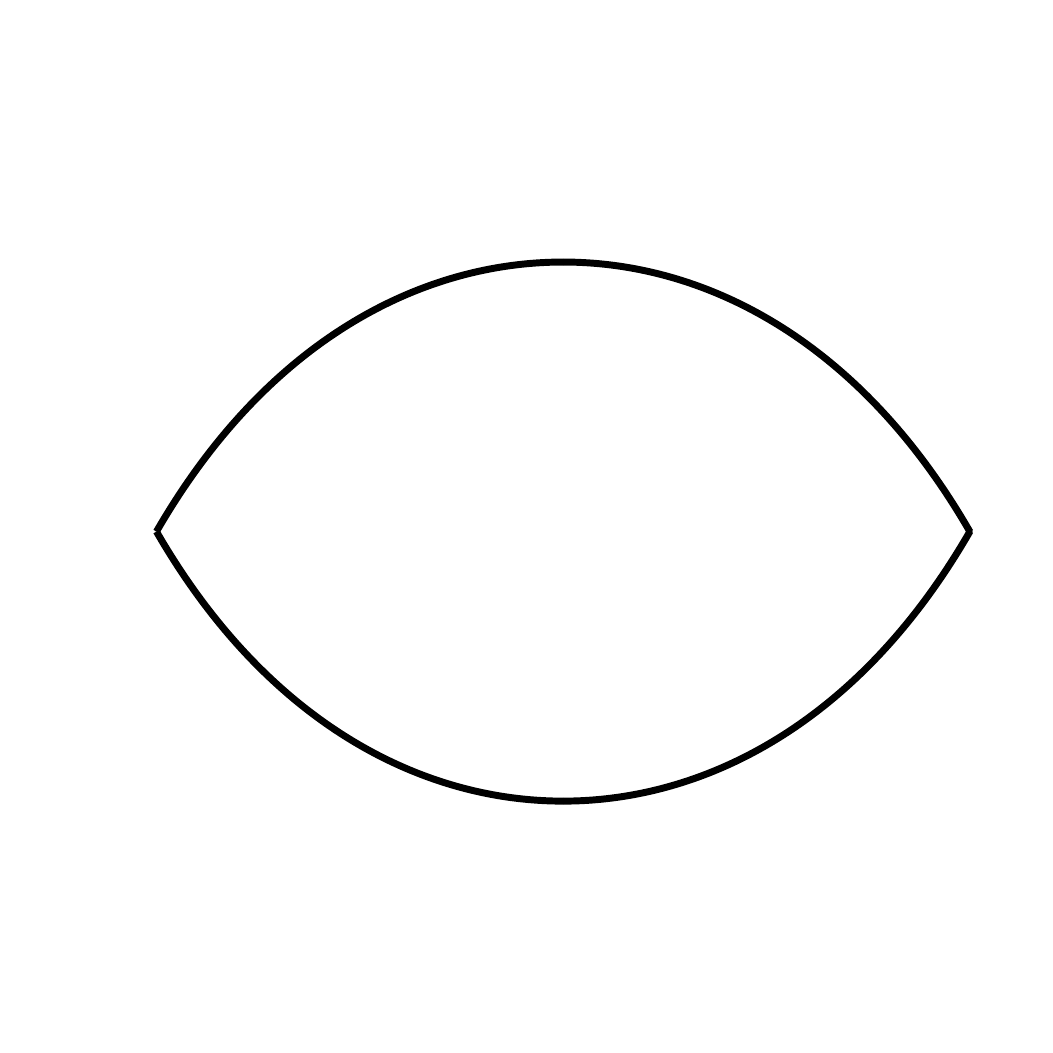} };
		
		\draw[<->,thick] (-3,0)--(3,0) node[right]{};
		\draw[<->,thick] (0,-2)--(0,2) node[above]{};
		
		\fill ( 0.0,0.0) circle (2.0pt) node[below right]{$0$};
		\fill ( 2.0,0.0) circle (2.0pt) node[below right]{$1$};
		\fill (-2.0,0.0) circle (2.0pt) node[below left]{$-1$};
		
		\fill ( 0.0, 1.32548) circle (2.0pt) node[above right]{$ic$};
		\fill ( 0.0,-1.32548) circle (2.0pt) node[below right]{$-ic$};
		
		\draw (8.0,0) circle (2.0cm) node[left]{$ $};
		\node[ann] at (8.0,0.0) {$|z|<1$};
		
		\draw[->] (3,1) to[out=30,in=150] node[midway,above] {$g(w)$} (5,1);
		\draw[<-] (3,-1) to[out=-30,in=-150] node[midway,below] {$g^{-1}(z)$} (5,-1);
	\end{tikzpicture}
	\caption{\emph{Illustration of Definition \ref{def1} and the action of the mapping $g(z)$: Left) Domain $K$, used to characterize the behavior of Bessel functions of complex argument. Right) the domain $D$ is the unit disk.} }\label{fig:domain_map}
\end{figure}

The case $|\kappa|\gg 1$, with $\kappa=\om h/2$, is of central importance for the paper. To simplify the presentation of this case, we map a particular region of the complex plane on concentric disks.
\begin{definition}\label{def1} Let $D=\{z:|z|<1\}$ and denote by $K\subset \mx C$ the open region enclosed by the parametric curve $w=\pm (\tau \coth\tau-\tau)^{1/2}\pm i(\tau^2-\tau\tanh \tau)^{1/2}$, $0<\tau<\tau_0$, where $\tau_0$ is the solution of $\coth \tau=\tau$. For $\delta>0$, define $S_\delta:=\{z:1-\delta<|z|<1+\delta\}$. Then, we define a continuous bijective mapping $g:\mx C\rightarrow \mx C$, where the range satisfies $\text{Ran}\,g|_{K}=D$ and $g$ maps the set $\{z:\,\text{dist}\,(z,\partial K)<\delta\}$ on $S_{\delta}$. Finally, $g$ is the identity map on $\R$ (see Fig. \ref{fig:domain_map}).
\end{definition}

The mapping $g$ in Definition \ref{def1} allows us to split the complex plane in three zones by using concentric disks. 
The region $K$ has previously been used \cite{olver54,olver_DLMF} to derive a uniform expansion of the Bessel functions $J_{\nu}(\nu z),\,Y_{\nu}(\nu z)$ for large order $\nu$ and complex argument $z$. The behavior of  $J_{\nu}(\nu z), Y_{\nu}( \nu z)$ depends on the location of $z$ with respect to $K$. 
Particularly, we use the result that 
$|J_{\nu}(\nu z)|$ decays or grows rapidly
with $|\im z|$, 
depending on whether $z$ lies inside or outside $K$. 
Similarly, $|Y_{\nu}(\nu z)|$ grows unbounded 
as dist$(z,\partial K)$ increases. Finally, $Y_{\nu}(\nu z)$ has complex zeros outside $K$, which become poles of $Q_\nu(\nu z)$. Those zeros are located close to $\partial K$, in the transition zone.

The following lemma is used to extend the dispersion analysis to complex $\omega$.
\begin{lemma}\label{lemmaQ2} Define $\delta_\nu=(x_2-x_1)/2$, where $x_1,\,x_2\in\mx R^+$ are the first two real roots of $Y_\nu (x)$. Let 
$\mx H^+:=\{z : 0<\arg z<\pi,\,\,\hbox{with}\,\,\im z > \delta_\nu\}$ and 
$\mx H^-:=\{z : -\pi<\arg z<0,\,\,\hbox{with}\,\,\im z < -\delta_\nu\}$. 
Then, for $\nu,|z|$ large, and $|\nu g(z/\nu)|>\nu+\nu^{1/3}$ the following approximations hold:
\begin{equation}
Q_\nu(z)\approx\mp i,\quad z\in \mx H^{\pm}.
\label{eq:lemma2}
\end{equation}
\end{lemma}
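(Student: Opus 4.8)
The plan is to reduce the statement to the asymptotic balance between the two Hankel functions of large order, the geometry of which is precisely what the eye $K$ and the mapping $g$ of Definition~\ref{def1} encode. First I would use $H^{(1)}_\nu=J_\nu+iY_\nu$ and $H^{(2)}_\nu=J_\nu-iY_\nu$, i.e.\ $J_\nu=\tfrac12(H^{(1)}_\nu+H^{(2)}_\nu)$, $Y_\nu=\tfrac1{2i}(H^{(1)}_\nu-H^{(2)}_\nu)$, to rewrite
\[
  Q_\nu(z)=\frac{J_\nu(z)}{Y_\nu(z)}=i\,\frac{H^{(1)}_\nu(z)+H^{(2)}_\nu(z)}{H^{(1)}_\nu(z)-H^{(2)}_\nu(z)}.
\]
It then suffices to show that on the admissible part of $\mx H^+$ the first Hankel function is exponentially negligible, $H^{(1)}_\nu(z)/H^{(2)}_\nu(z)\to 0$, whence $Q_\nu(z)\to i\cdot\tfrac{0+1}{0-1}=-i$; and, symmetrically, that on the admissible part of $\mx H^-$ one has $H^{(2)}_\nu(z)/H^{(1)}_\nu(z)\to 0$, whence $Q_\nu(z)\to i\cdot\tfrac{1+0}{1-0}=i$. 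The constant $\mp i$ thus comes out of this elementary algebra alone, with no need to control amplitude prefactors.

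Next I would translate the hypotheses into the large-order setting. Writing $\zeta=z/\nu$, the condition $|\nu g(z/\nu)|>\nu+\nu^{1/3}$ is the same as $|g(\zeta)|>1+\nu^{-2/3}$; since by Definition~\ref{def1} $g$ maps $K$ onto $D$ and a $\delta$-tube around $\partial K$ onto $S_\delta$, this says exactly that $\zeta$ lies outside the eye $K$ at distance of turning-point order $\gtrsim\nu^{-2/3}$ from $\partial K$ --- on the real axis it is just the classical requirement $z>\nu+\nu^{1/3}$, placing $z$ past the turning-point layer around $z=\nu$. On this set Olver's uniform asymptotic expansions for $H^{(1,2)}_\nu(\nu\zeta)$, $\nu$ large and $\zeta$ complex \cite{olver54,olver_DLMF,olver_new}, are valid, and because $\zeta$ is bounded away from $\partial K$ the Airy factors in those expansions may be replaced by their exponential asymptotics with uniformly $O(\nu^{-1})$ relative error. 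This yields Debye-type representations with a common slowly varying amplitude $A(\zeta)$,
\[
  H^{(1)}_\nu(\nu\zeta)=A(\zeta)\,\nu^{-1/2}e^{\,i\nu\phi(\zeta)}\bigl(1+O(\nu^{-1})\bigr),\qquad
  H^{(2)}_\nu(\nu\zeta)=A(\zeta)\,\nu^{-1/2}e^{-i\nu\phi(\zeta)}\bigl(1+O(\nu^{-1})\bigr),
\]
where $\phi(\zeta)=(\zeta^2-1)^{1/2}-\arccos(1/\zeta)$ is the Debye phase, analytically continued from $\zeta\in(1,\infty)$. In particular $\bigl|H^{(1)}_\nu(z)/H^{(2)}_\nu(z)\bigr|=e^{-2\nu\,\im\phi(z/\nu)}\bigl(1+O(\nu^{-1})\bigr)$.

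It remains to estimate this ratio. By the properties of $J_\nu(\nu z)$ and $Y_\nu(\nu z)$ recalled above and in \cite{olver54,olver_DLMF}, for $\zeta$ outside $K$ the function $H^{(1)}_\nu(\nu\zeta)$ is recessive while $|J_\nu(\nu\zeta)|$ --- equivalently $|H^{(2)}_\nu(\nu\zeta)|$ --- grows exponentially with $|\im\zeta|$ as $\zeta$ recedes from $\partial K$; in the Debye form this is the statement $\im\phi(\zeta)>0$, with $\nu\,\im\phi(\zeta)$ bounded below through $\nu\,\mathrm{dist}(\zeta,\partial K)\gtrsim\nu^{1/3}$ and through $\nu\,\im\zeta=\im z>\delta_\nu$. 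Hence, on the admissible subset of $\mx H^+$, $\nu\,\im\phi(z/\nu)\to+\infty$, so $H^{(1)}_\nu(z)/H^{(2)}_\nu(z)\to 0$ and $Q_\nu(z)\to -i$; the case $z\in\mx H^-$ is obtained by complex conjugation ($\im\phi<0$ there, $H^{(2)}_\nu/H^{(1)}_\nu\to 0$, $Q_\nu\to i$). The bound $\im z>\delta_\nu$ also keeps $z$ off the real zeros of $Y_\nu$ --- the poles of $Q_\nu$, which lie near the real axis in the oscillatory region with spacing $\approx2\delta_\nu$ --- so that $Q_\nu$ is holomorphic there and \eqref{eq:lemma2} is meaningful.

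The main obstacle, and the only genuinely analytic ingredient, is the uniformity of the last two steps: confirming that $|\nu g(z/\nu)|>\nu+\nu^{1/3}$, through the construction of $g$ in Definition~\ref{def1}, is the correct complex analogue of ``distance $\gg\nu^{1/3}$ beyond the turning point'', so that Olver's expansions hold with the Airy functions legitimately replaced by their exponential asymptotics and $O(\nu^{-1})$ remainders uniform in $z$, and obtaining a uniform positive lower bound for $\im\phi(z/\nu)$ --- equivalently, for the exponential growth rate of $|J_\nu(\nu z)|$ outside $K$ --- on the admissible set. All of this is contained in the Bessel-function analysis of \cite{olver54,olver_DLMF,olver_new}; granting it, the identity of the first paragraph yields the limits $\mp i$ immediately.
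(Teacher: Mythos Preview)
Your proposal is correct and shares with the paper the same algebraic starting point: rewriting
\[
Q_\nu(z)=i\,\frac{H^{(1)}_\nu(z)+H^{(2)}_\nu(z)}{H^{(1)}_\nu(z)-H^{(2)}_\nu(z)}
\]
and then arguing that one Hankel function is exponentially negligible against the other on $\mx H^\pm$. Where you diverge is in the asymptotic machinery used to justify that dominance. The paper simply invokes the large-argument Hankel asymptotics \cite[(9.2.3), (9.2.4)]{abramowitz+stegun}, i.e.\ $H^{(1,2)}_\nu(z)\sim\sqrt{2/(\pi z)}\,e^{\pm i(z-\nu\pi/2-\pi/4)}$, writes $z=x+iy$, and substitutes directly into the quotient to obtain
\[
Q_\nu(z)\approx i\,\frac{e^{ix}e^{-y}e^{i\theta}+e^{-ix}e^{y}e^{-i\theta}}{e^{ix}e^{-y}e^{i\theta}-e^{-ix}e^{y}e^{-i\theta}},\quad \theta=-\tfrac{\nu\pi}{2}-\tfrac{\pi}{4},
\]
from which the sign of $y$ immediately gives $\mp i$. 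You instead pass through Olver's uniform large-order expansions and the Debye phase $\phi(\zeta)=(\zeta^2-1)^{1/2}-\arccos(1/\zeta)$, reducing the question to the sign of $\im\phi(z/\nu)$ outside the eye $K$. Your route is heavier but arguably more honest about the regime where $\nu$ and $|z|$ are simultaneously large (the fixed-$\nu$ asymptotics (9.2.3)--(9.2.4) are not \emph{a priori} uniform in $\nu$), and it makes explicit how the hypothesis $|\nu g(z/\nu)|>\nu+\nu^{1/3}$ enters as the turning-point separation condition. The paper's route is a two-line computation once one accepts (9.2.3)--(9.2.4); yours gives a clearer account of why those formulas are legitimate here.
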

\begin{proof} 
By the conditions stated above, \cite[eqs. (9.2.3), (9.2.4)]{abramowitz+stegun} hold, and $g(z/\nu)\in \C\setminus (\overline{D\cup S_\delta})$. From the identities 
$
J_\nu(z) = \tfrac{1}{2}(H^{(1)}_\nu(z)+H^{(2)}_\nu(z)),\,
Y_\nu(z) = \tfrac{1}{2i}(H^{(1)}_\nu(z)-H^{(2)}_\nu(z))
$ and \cite[eqs. (9.2.3), (9.2.4)]{abramowitz+stegun}, we obtain the quotient
\begin{equation}
Q_\nu(z)=i\,\fr{H^{(1)}_\nu(z)+H^{(2)}_\nu(z)}{H^{(1)}_\nu(z)-H^{(2)}(z)}
\approx 
i\,\fr{e^{ix}e^{-y}e^{i\theta}+e^{-ix}e^{y}e^{-i\theta}}
{e^{ix}e^{-y}e^{i\theta}-e^{-ix}e^{y}e^{-i\theta}}\,,\quad
x,y\in \mx R, \,\,\theta=-\frac{\nu\pi}{2}-\frac{\pi}{4},
\label{eq:lemma2_proof}
\end{equation}
where we set $z:=x+iy$. Assume that $z\in\mx H^\pm$, then \eqref{eq:lemma2_proof} implies that $Q_\nu(z)\approx \mp i$. 
\end{proof}

The following theorem extends \cite[Thm 3.3]{Ainsworth04} to complex frequencies $\omega$.
\begin{theorem} \label{thm:high_frequency}
 Let $p\in \mx N,\,\, \om,\kappa\in\mx C$, assume that $|\om h|\gg 1$, and take $\sigma=(2p+1)\cdot g(\om h/(2p+1))$. Then the error $|\mc E^p|$ in the discrete dispersion relation passes through three distinct phases as the order $p$ is increased:
\begin{enumerate}
	\item [{\rm i)}]Non-decaying zone: For $2p+1<|\sigma|-o(|\sigma|^{1/3})$, the difference $|\mc E^p|$ does not decay as $p$ is increased. For the case with small $|\im \om|$, 
then $|\mc E^p|$ oscillates, but does not decay, as $p$ is increased.
	\item [{\rm ii)}] Transition zone: For $|\sigma|-o(|\sigma|^{1/3})<2p+1<|\sigma|+o(|\sigma|^{1/3})$, and $\om h$ not a pole of \eqref{eq:Ep}, the error $|\mc E^p|$ decays at rate:
	\begin{equation}
	|\mc E^p|\approx \left| \sin(\om h) \fr{\textup{Ai}(\xi)}{\textup{Bi}(\xi)} \right|, 
	\,\,\xi=-\left(\fr{2}{p}\right)^{1/3}\fr{\om h-2p}{2},
	\label{eq:airy_decay}
	\end{equation}
	where \textup{Ai,\,Bi} denote Airy functions.
	\item [{\rm iii)}] Superexponential decay: For $2p+1>|\sigma|+o(|\sigma|^{1/3})$, $|\mc E^p|$ decreases at a superexponential decay rate:
	\begin{equation}
	|\mc E^p|\approx \left| \fr{\sin(\om h)}{2} f(\sqrt{1-(\om h/(2p+1))^2})^{p+1/2}\right|,
	\label{eq:exp_decay}
	\end{equation}
	where $f: w\rightarrow (1-w)/(1+w)\exp(2w)$, with $|f(w)|<1$. In particular, for the case $2p+1>|\om h|e/2$ with $e=\exp(1)$, we have
	\begin{equation}
	|\mc E^p|\approx \left| \fr{\sin(\om h)}{2} \left[\fr{\om h e}{2(2p+1)}\right]^{2p+1}\right|.
	\label{eq:exp_decay_faster}
	\end{equation}	
\end{enumerate}
\end{theorem}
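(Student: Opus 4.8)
The plan is to reproduce the architecture of the proof of \cite[Thm 3.3]{Ainsworth04}, systematically replacing each appeal to real-variable Bessel asymptotics by its complex-variable counterpart from \cite{olver_DLMF,olver54}, and to organise the three phases by the position of the point $z:=\om h/(2p+1)$ relative to the region $K$ of Definition \ref{def1}. Since $g$ carries $K$ onto the unit disk $D$, one has $|\sigma|=(2p+1)\,|g(z)|$, so $2p+1\gtrless|\sigma|$ distinguishes $z\in K$ from $z\notin\overline K$. As recalled before Lemma \ref{lemmaQ1}, the representation \eqref{eq:Ep} holds for $|\arg\om|<\pi$; moreover both orders $2N_e+\tfrac32$ and $2N_o+\tfrac12$ equal $p+\tfrac12$ or $p+\tfrac32$, so for $|\om h|\gg1$ the whole of $\mc E^p$ is a fixed rational expression in $\sin\om h$, $\tan\tfrac{\om h}{2}$, $\cot\tfrac{\om h}{2}$ and the two quotients $Q_{2N_e+3/2}(\tfrac{\om h}{2})$, $Q_{2N_o+1/2}(\tfrac{\om h}{2})$. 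Everything then reduces to the large-order asymptotics of $Q_m(\tfrac{\om h}{2})$, plus a verification that the correction factors $\{1-Q\tan\tfrac{\om h}{2}\}^{-1}$, $\{1+Q\cot\tfrac{\om h}{2}\}^{-1}$, $\{1+\tfrac{\sin\om h}{2\om h}(\mc E^p_o-\mc E^p_e)\}^{-1}$ stay bounded and bounded away from zero in each regime; for $\om$ with $|\im\om|$ bounded away from $0$ this is automatic since $\tan\tfrac{\om h}{2},\cot\tfrac{\om h}{2}$ are then uniformly bounded, and near the real axis it goes as in \cite{Ainsworth04}.

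For phase (iii) I would argue as follows. When $2p+1>|\sigma|+o(|\sigma|^{1/3})$ the point $z$ lies well inside $K$, so $g(z)\in D$, and the Debye-type uniform expansion of $J_\nu(\nu z)$, $Y_\nu(\nu z)$ for large order and complex argument applies; writing $w=\sqrt{1-z^2}$ it yields $J_\nu(\nu z)\sim c_\nu(1-z^2)^{-1/4}e^{\nu(w-\arctanh w)}$ and $Y_\nu(\nu z)\sim-2c_\nu(1-z^2)^{-1/4}e^{-\nu(w-\arctanh w)}$, hence
\[
	Q_\nu(\nu z)\sim-\tfrac12\,f(w)^{\nu},\qquad f(w)=\tfrac{1-w}{1+w}\,e^{2w},\quad|f(w)|<1,
\]
which is precisely the function $f$ of the statement. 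As $|Q|$ is then superexponentially small, every correction factor in \eqref{eq:Ep} equals $1+o(1)$ and $\mc E^p\approx-\sin(\om h)\,(Q_{2N_o+1/2}+Q_{2N_e+3/2})$; taking the order $\nu=p+\tfrac12$ with $z=\om h/(2p+1)$ (the companion order $p+\tfrac32$ contributing only at lower order when $|\om h|\ll2p+1$) gives \eqref{eq:exp_decay}. For the sharper range $2p+1>|\om h|e/2$ one has $|z|\ll1$, so $w=1-\tfrac{z^2}{2}+\cdots$ and $f(w)=\tfrac{z^2e^2}{4}(1+o(1))=\bigl(\tfrac{\om h\,e}{2(2p+1)}\bigr)^2(1+o(1))$; inserting this into \eqref{eq:exp_decay} produces \eqref{eq:exp_decay_faster}, the hypothesis ensuring the base has modulus $<1$.

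For the transition and non-decaying zones I would proceed by switching expansions near the turning point. In the transition zone $|\sigma|-o(|\sigma|^{1/3})<2p+1<|\sigma|+o(|\sigma|^{1/3})$ the point $z$ lies within an $\mc O(\nu^{-2/3})$-neighbourhood of $z=1$ (equivalently $g(z)\in S_\delta$), where the Airy-type uniform expansion is the relevant one: $J_\nu(\nu z)\sim a_\nu\,\textup{Ai}(\nu^{2/3}\zeta)$, $Y_\nu(\nu z)\sim-a_\nu\,\textup{Bi}(\nu^{2/3}\zeta)$, with $\zeta=\zeta(z)$ the Olver variable, $\zeta(1)=0$, $\zeta=2^{1/3}(1-z)+\mc O((1-z)^2)$. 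With $\nu=p+\mc O(1)$ and $z=\om h/(2\nu)$ this gives $\nu^{2/3}\zeta\to\xi=-(2/p)^{1/3}\tfrac{\om h-2p}{2}$, whence $Q_\nu(\tfrac{\om h}{2})\approx-\textup{Ai}(\xi)/\textup{Bi}(\xi)$ and, through \eqref{eq:Ep}, $|\mc E^p|\approx|\sin(\om h)\,\textup{Ai}(\xi)/\textup{Bi}(\xi)|$, i.e.\ \eqref{eq:airy_decay}; excluding the poles of \eqref{eq:Ep} removes the complex zeros of $Y_\nu$ clustering near $\partial K$ (equivalently the off-axis zeros of $\textup{Bi}$), where this quotient has a pole. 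Finally, for $2p+1<|\sigma|-o(|\sigma|^{1/3})$ the point $z$ lies outside $\overline K$: deep in $\mx H^\pm$ (that is, $|\im\om h|$ large) Lemma \ref{lemmaQ2} gives $Q_\nu(\tfrac{\om h}{2})\approx\mp i$, a nonzero constant, so $\mc E^p=\mc O(1)$ with leading part independent of $p$; for $|\im\om|$ small one is in the oscillatory Debye regime and, exactly as in \cite[Thm 3.3]{Ainsworth04}, $Q_\nu$ is an $\mc O(1)$ oscillating quantity, so $|\mc E^p|$ oscillates in $p$ without decaying.

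The main obstacle will be supplying the complex-variable Bessel input just invoked: establishing the Debye-type and Airy-type uniform asymptotic expansions of $J_\nu(\nu z)$, $Y_\nu(\nu z)$ for \emph{complex} $z$ with explicit, $\nu$-uniform error bounds, tracking the branch cuts of $w=\sqrt{1-z^2}$ and of $\zeta^{3/2}$, verifying that the boundary between the recessive and dominant behaviour of $J_\nu(\nu\,\cdot)$ is exactly Olver's curve $\partial K$ (which is what Definition \ref{def1} encodes), and matching the two expansions across the transition annulus $S_\delta$. A secondary but delicate point is the bookkeeping of the two half-integer orders $2N_e+\tfrac32$ and $2N_o+\tfrac12$, which differ by $1$: depending on the parity of $p$ one of the two terms in $Q_{2N_o+1/2}+Q_{2N_e+3/2}$ (respectively one of the two Airy quotients) is subdominant, and the leading constants in \eqref{eq:exp_decay} and \eqref{eq:airy_decay} — the factor $\tfrac12$, respectively the single quotient $\textup{Ai}(\xi)/\textup{Bi}(\xi)$ — record which survives in the regime of interest; this, together with the uniform control of the rational correction factors noted above, is the routine-but-careful remainder.
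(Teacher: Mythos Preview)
Your proposal is correct and follows essentially the same architecture as the paper: both reduce \eqref{eq:Ep} to the large-order behaviour of $Q_m(\kappa)$, organise the three phases by the position of $\kappa/m$ relative to Olver's region $K$, invoke Lemma~\ref{lemmaQ2} for the non-decaying zone, Airy-type transition asymptotics for phase~ii), and Debye-type expansions (with the substitution $w=\sqrt{1-(\kappa/m)^2}$) for phase~iii). The only notable difference is that the paper does not attempt to \emph{establish} the complex-variable Bessel asymptotics you flag as the main obstacle---it simply cites the relevant DLMF/Olver formulas (\cite[(10.19.8), (10.20.3)--(10.20.5)]{olver_DLMF}, \cite[Sec.~4]{olver54}) and then verifies that Ainsworth's transformations $w=\sqrt{1-(\kappa/m)^2}$, $z=m(\arctanh w-w)$, $z=\tfrac{2}{3}\xi^{3/2}$ are analytic continuations of their real counterparts, so that \cite[(A.10)--(A.15)]{Ainsworth04} carry over verbatim; you can shortcut your ``main obstacle'' the same way.
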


\begin{proof} In the case $\om\in\mx \R^+$, then $\sigma=\om h$ and the theorem was proved in \cite[Thm. 3.3]{Ainsworth04}. In the rest of the proof we assume that $\im \omega \neq 0$ and set $\kappa=\om h/2$. For $\om h$ fixed, the error $|\mc E^p|$ in \eqref{eq:Ep} decays as $|Q_m|$ goes to zero. Then we describe the behavior of $|Q_m(\kappa)|$ in different regions of the complex plane as $p$ is increased. 

\rm i) In this regime $|g(\kappa/m)|>1$, which implies that the point $\kappa$ is in the complement of $K$. Let $\delta_m=(x_2-x_1)/2$, where $x_1,x_2$ are the first two positive roots of $Y_m(x)$. Then, $Q_m(\kappa)$ is close to a pole on the real line if $-\delta_m < \im \kappa < \delta_m$. If $|\im \kappa|<\delta_m$, then by \cite[eqs. (9.2.1), (9.2.2)]{abramowitz+stegun} the function $Q_m$ is oscillatory and dominated by $\re\kappa$, with an error $e^{|\im \kappa|}\mc O(|\kappa|^{-1})$; Compare with \cite[Sec. A.1.1]{Ainsworth04}. 

For $|\im \kappa|>\delta_m$, we let $\hat z=\kappa/m$, and write $Q_m(m \hat z)$ in terms of Airy functions. 
For this, we use the uniform asymptotic expansions \cite[(10.20.4),(10.20.5)]{olver_DLMF} with $k=0$, 
and neglect the $m^{-5/3}$ term.
The transformation \cite[(10.20.3)]{olver_DLMF} can be analytically continued to the complex plane, provided that $\hat z$ is located outside $K$. This is true by the assumptions of theorem. 
Then, we use \cite[(9.6.6),(9.6.8)]{olver_DLMF} to obtain a representation in terms of the Bessel functions $J_{1/3},\,J_{-1/3}$ of fixed order, which is analogous to \cite[(A.7)]{Ainsworth04}. 
Similarly as in \cite[Sect. A.1.1]{Ainsworth04}, we use Watson formulas \cite[(8.440-1),(8.440-2)]{grad07} to obtain \cite[(A.8)]{Ainsworth04} that holds for complex $\om$. 
Finally, the argumentation given in \cite[Sect. A.1.1]{Ainsworth04} also holds in the present case. 
Additionally, Lemma \ref{lemmaQ2} implies that for the current region $|Q_m(\kappa)|\approx 1$, while the order of $|\mc E^p|$ is dominated by $|\sin \kappa|$, which grows exponentially with $|\im \kappa|$.

\rm ii) As $\kappa$ is not a pole of $Q_m(\kappa)$, we use asymptotic expansions for Bessel functions,
which are valid in the transition zone. Particularly, we truncate the series \cite[(10.19.8)]{olver_DLMF}
with $k=0$. For $\kappa$ fixed, 
the resulting formula becomes
\begin{equation}
	Q_m(\kappa)\approx -\fr{\textup{Ai}(\xi)}{\textup{Bi}(\xi)},\,\,\, \xi=-\left(\fr{2}{m}\right)^{1/3}(\kappa-m).
\label{eq:Q_airy}
	\end{equation}
Then for large $p$, we have that $2m\approx 2p$, and with the use of \eqref{eq:Ep}
we obtain \eqref{eq:airy_decay}. The same result is obtained by using  
Olver's uniform expansions in \cite[(10.20.4), (10.20.5)]{olver_DLMF}.
As a remark, we mention that even if \cite[(A.9)]{Ainsworth04} is a valid
linearization of \eqref{eq:Q_airy}, the approximation is quite rough in the complex case because 
$\min_{m\in \mathbb{Z}+1/2} |\kappa-m|\geq |\im \kappa|$ may be large. However, in the case $\im \kappa=0$, the linearization becomes a close approximation to \eqref{eq:Q_airy} in the transition zone, which implies that $\mc E^p$ decays algebraically at rate $\mc O(p^{-1/3})$.  \\


\rm iii) In this region $|g(\kappa/m)|<1$, or equivalently $\kappa/m\in K$. By \cite[Sec. 4]{olver54}, it follows that $|Q_m(\kappa)|$ decays. The approximation \cite[(A.10)]{Ainsworth04} is justified for complex arguments if the transformations $w=\sqrt{1-(\kappa/m)^2}$, $z=m(\arctanh w-w)$, and $z=\tfrac{2}{3}\xi^{3/2}$ are analytic continuations of its real valued versions. This is easily verified by writing $z\circ w(\kappa)$ explicitly, using the identity $\arctanh w\equiv \log((1+w)/\sqrt{1-w^2})$. After a direct calculation we obtain $\sqrt{1-w^2}=\kappa/m$. Substitution of these into $z/m=(\arctanh w-w)$ results in equation \cite[eq. (4.6)]{olver54}, which holds for $z/m\in K$. Finally, by having validated \cite[(A.10)]{Ainsworth04}, we proceed as in \cite{Ainsworth04} and derive (A.12), (A.13), (A.14), and (A.15). Hence, the results \cite[Sec. A.2.2]{Ainsworth04} hold, which finalizes the proof. Alternatively, \eqref{eq:exp_decay_faster} is straightforwardly obtained by the use of \cite[(10.19.1),(10.19.2)]{olver_DLMF} and \eqref{eq:Ep}.
\end{proof}


As an illustration of the results in this section, we present in Fig. \ref{fig:lag_1d}, a dispersion comparison between $\cos(\om h),\,R_p(\om h)$ for $\om=20-0.5i$ and polynomial order $p=20$, followed by a convergence plot showing exponential decay for both real and imaginary parts of the difference $R_p(\om h)-\cos(\om h)$. Notice that convergence starts at $p=20$ and $h=2$, where we see in the two first panels that $h=2$ is the largest mesh size where the difference is small.
\begin{figure}
	\centering
	\begin{tikzpicture}[thick,scale=1.0, every node/.style={scale=1.0}]	
	\tikzstyle{ann} = [fill=none,font=\large,inner sep=4pt]
	
	\draw( 0.00, 0.0) node { \includegraphics[scale=0.7]{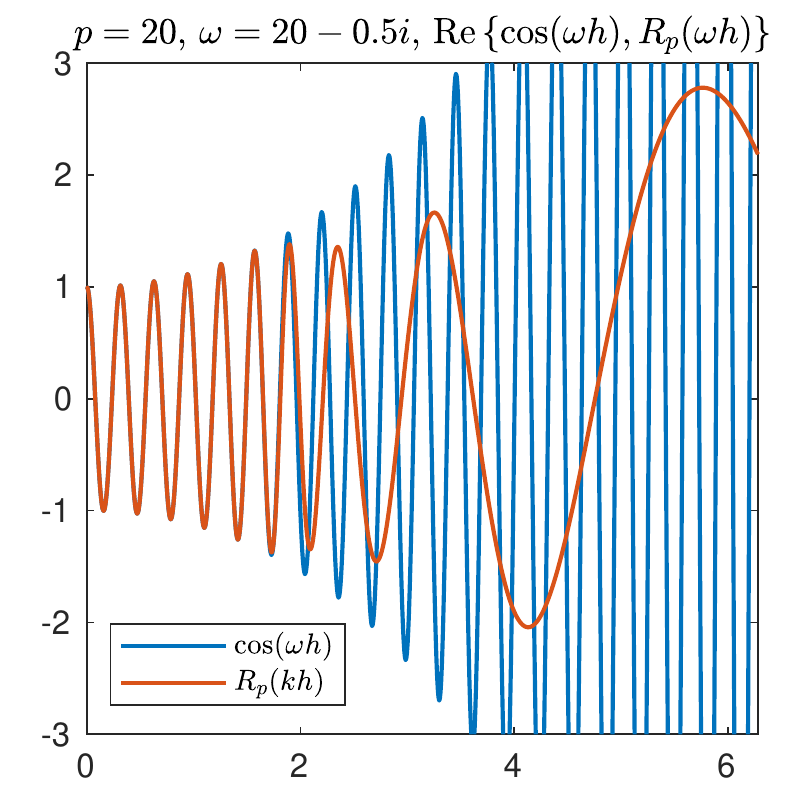} };
	\draw( 5.60, 0.0) node { \includegraphics[scale=0.7]{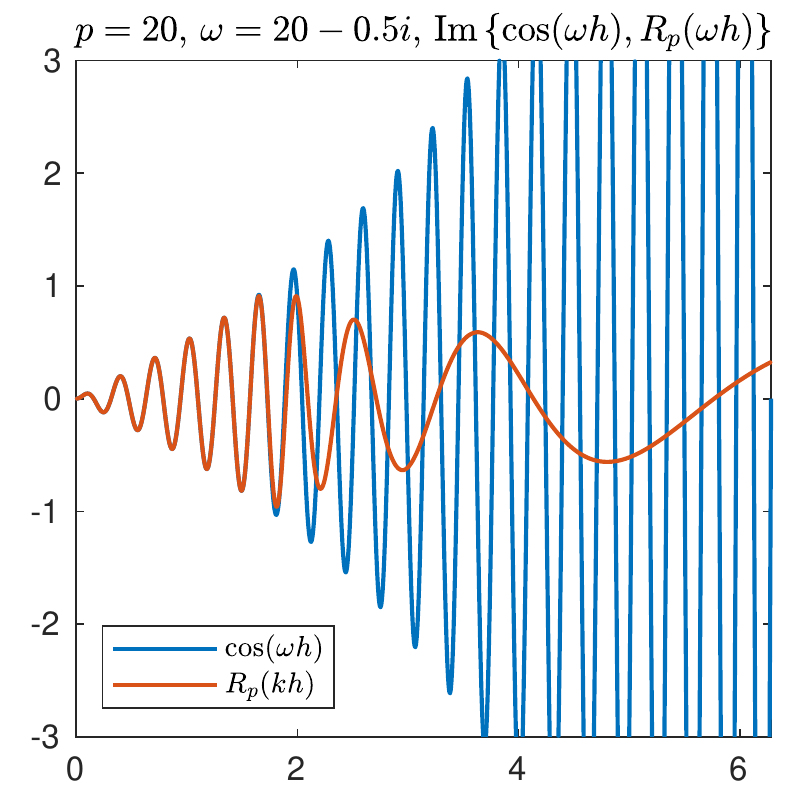} };
	\draw(10.50, 0.0) node { \includegraphics[scale=0.7]{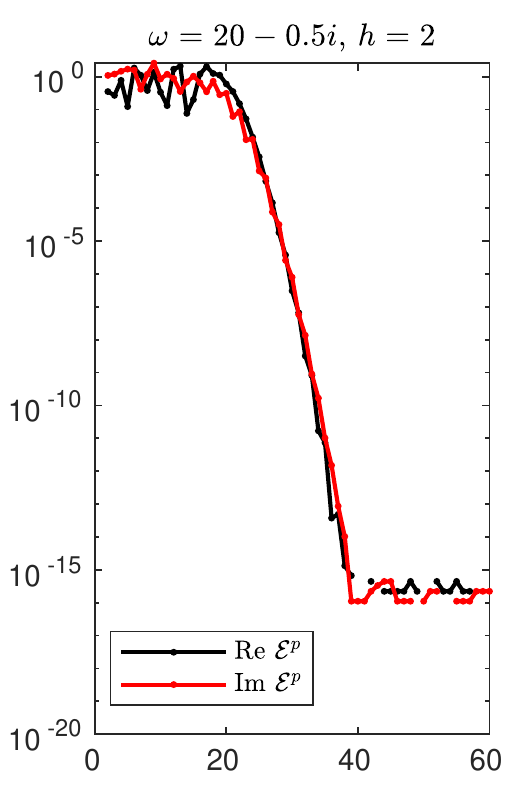} };
	
	\end{tikzpicture}
	\caption{\emph{In the first two panels we present a comparison of $\cos(\om h),\,R_p(\om h)$ for $\om\in\mx C$, by using $p=20$ and show left) real parts, middle) imaginary parts versus $h$ in the horizontal axis. In the right panel, we show convergence for $\re {\mc E^p},\, \im {\mc E^p}$ vs. $p$, with fixed $h=2.0$.}}\label{fig:lag_1d}
\end{figure}
The natural extension of the discrete dispersion relation to higher dimensions on tensor product meshes is presented in \cite[Sec. 2.3]{Ainsworth04}. 
\ce{We} refer the interested reader
to the work in \cite{AinsworthMaxwell04}, where it is
shown that the results from \cite{Ainsworth04} are also important 
for the analysis of the dispersive properties of high order edge
FE used for the full Maxwell equations.

\reviewerOne{In the following sections we make use of the dispersion analysis revised in the current section, for the FE computation of resonances in one and two dimensions with quadrilateral elements. 
Particularly, we design \emph{a-priori} strategies for problems with piecewise constant coefficients, considering each element in our triangulation separately.}

\noindent
\subsubsection*{Dispersion analysis for piecewise constant refractive index:}\label{sec:multidis1d}
Consider a problem similar to \eqref{eq:var_lag1d} with $\om^2$ replaced with $\om^2n^2$, and a refractive index profile $n$ defined by 
the constants $n_j$ for $x\in I_j$. The problem for $u$ can then be formulated as the solution of a linear system with matrix entries $\scatt_{l,m}$ given by 
the exponentials $c_{l,m} e^{i n_j \om x_l}$ defined in $I_{j}$.
For a piecewise polynomial approximation $u^\fem$, we obtain a corresponding matrix $\scatt^\fem(\om)$ that approximates $\scatt(\om)$.
Then, each entry $\mc E_{l,m}=|\scatt_{l,m}-\scatt_{l,m}^\fem|$ can be treated similarly as the dispersive error \eqref{eq:Ep}, which motivates the use of the dispersion analysis described in Section \ref{sec:description_lag}.


\section{Discretization, a-priori refinement strategies, and solution of the nonlinear eigenvalue problem}\label{sec:Disc_n_NEP}
In this section we describe the computational details used to obtain the approximated resonant pairs as the solution of the nonlinear eigenvalue problems described in Section \ref{sec:resonances}. In particular, we introduce an initial FE triangulation, which by assumption is conforming and regular. Additionally, we are given a region in $\C$ where we search for eigenvalues. 
Then, the initial triangulation
is refined  
depending on the permittivity function 
defined in the computational domain. 
The mesh refinement is performed following a-priori strategies that are presented below.
We motivate the extension to higher dimensions and describe how to obtain the resulting matrix problem in dimensions $d=1,2$. Finally, we describe our strategy for the solution of the resulting nonlinear eigenvalue problem.

\subsection{Discretization with the FE method}\label{sec:discretization}
The domain $\Omega\subset\mx R^d$ is covered with a regular and quasi uniform finite element mesh $\mc T(\Om_a)$ consisting of elements $\{K_j\}^N_{j=1}$. The mesh is designed such that the permittivity function $\eps (\om)$ is constant in each $K_j$. 
Let $h_j$ be the length of the largest diagonal of the non-curved primitive $K_j$
and denote by $h$ the maximum mesh size $h:=\max_j{h_j}$. 

In the following, $\mc P_p$ denotes the space of polynomials on $\mx R^d$ of degree $\leq p$ in each coordinate and the script $\fem:=\{h,p\}$ labels the discrete pairs.
Furthermore, we assign per element $K_j$ a local polynomial degree $p_j$ satisfying $1\leq p_j \leq p$. We define the finite element space $S^{\fem}(\Omega):=\{u\in H^1(\Omega):\left. u\right|_{K_j} \in\mc P_{p_j}(K_j)\,\,\hbox{for}\,\,K_j\in\mc T\}$, and $N:=\dim(S^{\fem}(\Omega))$ \cite{Babuska92}. Furthermore, in the case $d=2$, all the computations are done in the approximated domain $\Omega^\fem$ by using curvilinear elements following standard procedures \cite{Babuska92}. The used FE meshes are \emph{shape regular} in the sense of \cite[Sec. 4.3]{Schwab1998}, and consist of  quadrilaterals with curvilinear edges that deviate slightly from their \emph{non-curved} primitives. Finally, we assume that the PML is set up following the discussions in \cite{kim09,araujo+engstrom+2017}, which accounts for large enough $\ell$ and $\sigma_0$ such that the search region is feasible \cite{araujo+engstrom+2017}. We assume that the FE space in $\Om_{\PML}$ is good enough and concentrate on the physical region $\Om_a$.

\subsection{A-priori refinement strategies}\label{sec:con_assum}

In the current section, we present two novel \emph{a-priori} refinement strategies to be used for the computation of Helmholtz resonances with piecewise constant coefficients. 
Following the dispersion analysis sketched in Section \ref{sec:dispersion}, 
the aim is that a given initial mesh is refined (a-priori)
such that the resulting mesh satisfies the conditions
for superexponential decay of the dispersive error in Theorems \ref{thm:low_frequency} and \ref{thm:high_frequency}.
Resonances are then approximated by the eigenvalues of a rational matrix-valued function. The refractive index is by assumption the constant $n_j$ on element $K_j$. Then for $x\in K_j$, both the TM-case and the TE-case can be written in the form 
$-\Delta u_j -(\om_j n_j)^2 u_j =0$. A shift value $\mu$ is then introduced, and eigenvalues are approximated in the closed ball $\Lambda:=\overline{B(\mu,r_\mu)}$ centered at $\mu$ with radius $r_\mu$ in the complex plane. 
In practice $r_\mu$ should be large enough to allow the computation of several eigenvalues at once, but small enough such that $n_j(\om)$ does not vary excessively. We choose $\Lambda$ such that all $n_j(\om)$ are continuous functions for $\om\in\Lambda$.

Additionally, we define
\begin{equation}\label{eq:piecewise_refract}
	\lambda_j:=\argmax_{\om\in \Lambda} |n_j (\om)|,\,\,\,
	\tilde n_j:= n_j(\lambda_j),
	\,\,\,\,\hbox{and let}\,\,\,\,
	k_j:=
	\begin{cases} 
		|\mu| & \text{if}\,\, |\tilde n_j|<1 \\
		|\tilde n_j\mu| & \text{otherwise},
	\end{cases}
\end{equation}
where we assume that the arguments of the maxima consists of one point $\lambda_j$. For the description of our strategy, we use the following definition of an extended mesh.
\begin{definition}\label{def:ext_grid} 
{\bf Extended mesh:} Let $K_j$ be a one dimensional element of size $h_j:=x^j_2-x^j_1$ defined by the nodes in $[x^j_1,x^j_2]$. The extended mesh $\mc M(K_j)$ is then defined as the partition with points $\hat x_l=lh_j$, for all $l\in\mx Z$.
\end{definition} 

From this stage we estimate locally the dispersive properties of the finite element space, motivated by the results in Section \ref{sec:description_lag}. 
Refinement strategies can be designed so that for given $k_j$, a finite element space defined over $\mc M(K_j)$ satisfies the conditions in Theorems \ref{thm:low_frequency}, and \ref{thm:high_frequency} for superexponential decay.
In this way, we obtain target values for $p_j$ and $h_j$ that can be easily computed in each FE cell. In Section \ref{sec:strategies}, we present two alternative ways to achieve this.

In an a-priori refinement strategy, we first estimate the initial state of an input FE mesh by using a \emph{global mesh indicator} $\gamma_0$. This is done by first checking the constraints of the desired mesh: the minimum mesh size or maximum polynomial degree that we are allowed to use. This information is contained in $k_0,h_0$, defined below. Consecutively, we check each cell $K_j$ and refine $h_j,p_j$ if needed according to a specific goal.

\reviewerOne{
The goal of the refinement is to guarantee that in each $K_j$, our FE eigenfunctions restricted to $K_j$, satisfy the conditions for superexponential decay of the error on the extended mesh $\mc M(K_j)$
as it has been described in Section \ref{sec:dispersion}. Particularly, 
equations \eqref{eq:small_wh} and \eqref{eq:exp_decay_faster} suggest a relationship between $p$ and $h$ when the dispersive error $\mc E^p$ is in the superexponential decay region.
In the case of small $|\om h|$, we can rewrite the leading term in \eqref{eq:small_wh} as
$\mc E^p\approx c \cdot \gamma^2$, 
with $c$ and $\gamma$ given by
\begin{equation}\label{eq:dispersive_control}
	c=\fr{1}{4} \fr{\om h}{2p+1} \left( \fr{e}{2} \right)^{2p},\quad \gamma:=\left( \fr{\om h}{2p} \right)^{p},
\end{equation}
where we have used Stirling's approximation. 
For increasing $p$, the decay of $|\gamma|^2$ is faster than the increase of $|c|$. 
From Theorem \ref{thm:high_frequency} with $2p>|\om h|e/2$ it follows that the decay of $|\gamma|$ in \eqref{eq:dispersive_control} implies the decay of $|\mc E^p|$ in \eqref{eq:exp_decay_faster}.

Finally, for Helmholtz problems in higher space dimensions with real $\omega$, 
the quotient $\gamma$ is found to play an important role in the estimation of the dispersive FE errors \cite{ihl98,Sauter10,MR3094621}. For all these reasons, $\gamma$ is a natural indicator to be used for designing a-priori strategies for the control of dispersive FE errors.
}

The proposed refinement strategy for enriching the finite element space is based on the goal
\begin{goal}\label{def:goal}
For given $k_j$ and $\gamma_0$, find $h_j$ and $p_j$ such that the condition
\begin{equation}
	\left(\frac{k_j h_j}{2p_j} \right)^{p_j}\leq \gamma_0
	\label{eq:mesh_goal}
\end{equation}
is satisfied in $K_j$.
\end{goal}
\begin{remark}\label{rem1}
Depending on $k_j$ and $\gamma_0$, Goal \ref{def:goal} may be unfeasible. Then we say that the target finite element space is unreachable for the given input parameters.
\end{remark}

\subsection{Proposed refinement strategies}\label{sec:strategies}
In the following, we present two strategies in order to verify that the condition \eqref{eq:mesh_goal} is satisfied in each FE cell $K_j$.
The proposed a-priori 
refinement 
procedure is sketched in Algorithm \ref{algo}.

In the $h$-strategy we assume that $p_j$ is fixed, and we perform a standard $h$-refinement: split the cell $K_j$ in $2^d$ new cells and update $\mc T(\Om_a)$ \cite{bangerth09}. Similarly, in the $p$-strategy we keep $h_j$ fixed and find a suitable $p_j$.
\begin{algorithm}
  \caption{A-priori $hp$-FE refinement strategy\label{algo}}
  \DontPrintSemicolon
  \SetKwData{KwTo}{\!$,\dots,$}
  \SetKwInput{Input}{Input}
  \Input{$p_0$, $K_j$, $h_j$, $n_j$, $\Lambda$, and $\mu\in\Lambda$. Each element is assigned $p_j=p_0$.}
  Compute $k_j$ in each element \;
  Set $h_0$, $k_0$ and compute $\gamma_0$
  according to the strategies \ref{sec:h_str} or \ref{sec:p_str} \; 
  Check feasibility of refinement:
  \lIf{$\gamma_0\geq 1$}{restart with modified input parameters} \;
  \For{$j=1$ \KwTo $N$}{
     Check Goal \ref{def:goal} for element $K_j$ \;
     \lIf{Goal \ref{def:goal} not satisfied}{refine $h_j$ or $p_j$} \;
  }
  Start the assembly of the FE matrices \;
  Start the  NEP solver with shift $\mu$ and compute the pairs $(u^\fem_m,\om^\fem_m)$ \;
\end{algorithm}


The initial mesh $\mc T(\Om_a)$ is by assumption a conforming triangulation of $\Omega_a$ without \emph{ghost nodes}; see \cite{bangerth09}. Then, a fixed polynomial degree $p_j=p_0\geq 1$ is assigned to each element $K_j$, $j\in\mc I_0:=\{j:1\leq j\leq N\}$.
The refractive index profile $n_j$ is known per element, and the region $\Lambda\subset\C$ containing the shift $\mu$ has been specified. From \eqref{eq:piecewise_refract}, we assign the constants $k_j$ to each element $K_j$ of mesh size $h_j$.
Then, a parameter $\gamma_0$ is introduced in order to 
account for the state of the initial mesh.
In the case $\gamma_0<1$, we perform an a-priori refinement of the mesh, and continue with the steps of Algorithm \ref{algo}. Otherwise we go back to beginning of Algorithm \ref{algo} and ask the user to modify the input parameters.

Below we propose two strategies for achieving Goal \ref{def:goal}.
\subsubsection{$h$-strategy ($sh$-FE)}\label{sec:h_str}  
Let $k_0=\min_{j\in\mc I_0} k_j$ and define $\mc I:=\{j:k_j=k_0\}$, $h_0:=\min_{j\in\mc I} h_j$. 
Then, we check the state of the initial mesh 
for given $k_0,\,h_0 \in \mx R$, and define the global mesh indicator as
\[
	\gamma_0:=\left(\frac{k_0 h_0}{2p_0} \right)^{p_0}.
\]
Consecutively, we perform $h$-refinements in all cells $K_j$, with $j\in\mc I_0$, such that Goal \ref{def:goal} is satisfied.
The last statement implies satisfying the condition
\begin{equation}
	h_j \leq  \left(\frac{k_0}{k_j}\right) h_0.
	\label{eq:h_goal}
\end{equation} 
After this, we iteratively refine cells $K_j$ such that each cell has neighboring cells that are at most one level of refinement higher than itself. For this, we allow ghost nodes, and we do not coarsen cells.
\subsubsection{$p$-strategy ($hp$-FE)}\label{sec:p_str}
Let $k_0=\max_{j\in\mc I_0} k_j$ and define $\mc I:=\{j:k_j=k_0\}$, $h_0:=\max_{j\in\mc I} h_j$. We compute the $p_j$ corresponding to the element $K_j$ such that \eqref{eq:mesh_goal} is satisfied. The last statement requires solving for the zeros $z_i$ of the nonlinear equation
\begin{equation}
	F_j(z):=\left( \frac{k_j h_j }{2z} \right)^{z}-\gamma_0=0,
	\,\,\,\text{with}\,\,\,\gamma_0:=\left(\frac{k_0 h_0}{2p_0} \right)^{p_0}.
	\label{eq:scheme_nonlinear}
\end{equation}
We choose the solution $z_i\geq 1$ that minimizes $|p_0-z_i|$. Finally, we take 
$p_j:=\myceil{z_i}$,  
where $\myceil{z_i}$ is the smallest integer greater than or equal to $z_i$.
\begin{remark}\label{rem2}
In order to solve \eqref{eq:scheme_nonlinear} we compute the derivative with respect to $z$, and solve by using a scalar Newton-Raphson root finder. We use $z_0=p_0$ as initial guess and search for solutions in $z_i\in[1,p_0]$. If the only roots are such that $z_i<1$, then the resulting $z_i$ is not feasible. Possible workarounds are to increase the input parameter $p_0$, or a further uniform $h$-refinement may be needed before starting the strategies. 
\end{remark} 
\subsection{Assembly of FE matrices}\label{assembly}

In this subsection, we consider the assembly of the FE matrices for the 1D problem  \eqref{eq:1D} and the 2D problem \eqref{eq:eig_prob2D}. Assume that the set of
shape functions $\{\varphi_1,\dots,\varphi_N\}$ is a basis of the space $S^{\fem}(\Omega^\fem)$ defined in section \ref{sec:discretization}. Then $u_\fem\in S^{\fem}(\Omega^\fem)$ has the representation
\begin{equation}
u_\fem=\sum_{j=1}^N \xi_j\,\varphi_j.
\label{eq:fem_ans}
\end{equation}
\subsubsection{Discrete problem in 1D}\label{sec:dtn_1d}

From \eqref{eq:1D}, with $\Omdtn^\fem\subset\R$, we state the corresponding finite element problem: Find $u_\fem\in S^{\fem}(\Omdtn^\fem)\setminus \{0\}$ and $\om_\fem \in\mc D$, such that $t_1(\om_\fem)[u_\fem,v]=0$ is satisfied for all $v\in S^{\fem}(\Omdtn^\fem)$.

Similarly, we state the corresponding matrix problem: Find the eigenpairs $(\om_\fem,\xi)\in\mc D\times \mx C^{N}$ such that
\begin{equation}\label{eq:dtn_discrete}
	T_1(\om_\fem) \xi:=\left(\sum_{m=0}^{N_r} 
	\left\{\rho_m(\om_\fem )A_m-\om_\fem^2 \eta_m (\om_\fem )M_m\right\}
	-i\om_\fem \rho_0 E \right){\xi}=0,
\end{equation}
with finite element matrices
\begin{equation}
	A^m_{ij}=\int_{\Omdtn_m} \varphi'_j\varphi'_i\,dx,\,\,M^m_{ij}=\int_{\Omdtn_m} \varphi_j\varphi_i\,dx,\,\,
	E_{ij}=\left(\varphi_j(-\dtn)\varphi_i(-\dtn)+\varphi_j(\dtn)\varphi_i(\dtn)\right),
	\label{eq:mat_dtn}
\end{equation}
for $m=0,\ldots,N_r$.
\subsubsection{Discrete problem in 2D}\label{sec:pml_2d}
From \eqref{eq:eig_prob2D}, with $\Om^\fem\subset\R^2$, we state the  corresponding finite element problem: Find $u_\fem\in S^{\fem}(\Om^\fem)\setminus \{0\}$ and $\om_\fem \in\mc D$, such that $t_2(\om_\fem)[u_\fem,v]=0$ is satisfied for all $v\in S^{\fem}(\Om^\fem)$.  The entries in the finite element matrices become
\begin{equation}
\begin{array}{ll}
	A^0_{ij}= (\nabla\varphi_j, \nabla\varphi_i)_{\Om^\fem_0}+
						(\mc A\nabla\varphi_j, \nabla\varphi_i)_{\Om^\fem_\PML},	
	& 
	M^0_{ij}=(\varphi_j, \varphi_i)_{\Om^\fem_0} +
				 	 (\mc B \,\varphi_j, \varphi_i)_{\Om^\fem_\PML} \\[2mm]
	A^m_{ij}= (\nabla\varphi_j, \nabla\varphi_i)_{\Om^\fem_m}, & M^m_{ij}=(\varphi_j, \varphi_i)_{\Om^\fem_m},
\end{array}
\label{eq:def_fem}
\end{equation} 
with $m=1,\ldots,N_r$.

The nonlinear matrix eigenvalue problem reads: Find the eigenpairs $(\om_\fem ,{\xi})\in\mc D\times \C^N\setminus \{0\}$ such that
\begin{equation}
T_2 (\om_\fem) \,{\xi}:=\left(\sum_{m=0}^{N_r} \rho_m(\om_\fem )A_m-\om_\fem^2 \eta_m (\om_\fem )M_m\right){\xi}=0.
\label{eq:eig_matrix}
\end{equation}

All numerical experiments have been carried out using the finite element library \emph{deal.II} \cite{dealII82} with Gauss-Lobatto shape functions \cite[Sec. 1.2.3]{solin04}. For fast assembly and computations with complex numbers the package PETSc \cite{petsc-efficient} is used.

The computational platform used for the executions is Tirant 3, consisting of 336 computing nodes and on each of them two Intel Xeon SandyBridge E5-2670 processors (16 cores each). The processors, running at 2.6 GHz with 32 GB of memory, are interconnected with an Infiniband FDR10 network. All runs are scheduled for at most 4 MPI processes per node.

\subsection{Solution of the nonlinear eigenvalue problem}\label{sec:NLEIGS}

For solving the nonlinear eigenvalue problems we use SLEPc \cite{slepc05+roman} and in particular its NEP module \cite{campos18}. We provide a target value $\mu$ and request to compute a few eigenvalues (and corresponding eigenvectors) close to that value. This process is repeated for several values of $\mu$ in order to cover the region of interest.

The user interface to SLEPc allows the representation of the nonlinear eigenproblem by passing a list of matrices and a list of corresponding scalar nonlinear functions.
In our case, the matrix problem to be solved is \eqref{eq:eig_matrix},
from where the functions that multiply the matrix coefficients are either polynomial ($1$ and $-\omega^2$) or rational ($-\omega^2\eps(\omega)$ and $1/\eps(\omega)$). SLEPc provides a simple mechanism to define these functions, either by providing the coefficients of numerator and denominator, or by combining other functions (e.g., additive combination as required in \eqref{eq:drude_lorentz}).

In this work, we use SLEPc's implementation of the NLEIGS method \cite{guttel14}, which was developed in the course of this paper. We next provide a brief description of this method, together with some implementation details that improve the solver's efficiency. We express the eigenvalue problem as
\begin{equation}\label{eq:split}
T(\omega)\xi=0,\qquad\text{with}\quad T(\omega)=\sum_{i=1}^{d}A_if_i(\omega),
\end{equation}
where $\omega$ is the eigenvalue, $\xi$ is the eigenvector, $A_i$ are constant matrices and $f_i$ are scalar nonlinear functions. NLEIGS aims at finding eigenvalues located inside a certain region of the complex plane $\Sigma$.

For this, it first approximates $T$ in that region with a rational matrix $F_d$ whose poles are selected from the set of singularities of $T$, denoted by $\Gamma$. The rational approximation has the form
\begin{equation}\label{eq:nleigsq}
F_d(\omega):=\sum_{j=0}^{d}b_j(\omega)D_j
\end{equation}
and is constructed so that it interpolates $T$ at nodes $\sigma_j\in\partial\Sigma$ (the boundary of $\Sigma$), using the rational basis functions with poles at $\gamma_j\in\Gamma$ defined by the recursion
\begin{equation}\label{eq:nleigsbj}
b_0(\omega)=1,\quad b_{j}(\omega)=\frac{\omega-\sigma_{j-1}}{\beta_{j}(1-\omega/\gamma_{j})}b_{j-1}(\omega),\quad j=1,2,\dots.
\end{equation}
The $\beta_j$'s are normalization factors chosen so that $\max_{\omega\in\partial\Sigma}|b_j(\omega)|=1$. The interpolation nodes and poles that determine the approximation $F_d(\omega)$ are obtained as a sequence of Leja--Bagby points for $(\Sigma,\Gamma)$~\cite{guttel14}. The interpolation conditions $R_j(\sigma_j)=T(\sigma_j)$ determine that the coefficient matrices $D_j$ of~\eqref{eq:nleigsq} (called rational divided differences) can be computed via the recurrence
\begin{equation}\label{eq:nleigsdd}
D_0=\beta_0T(\sigma_0),\quad
D_{j}=\frac{T(\sigma_j)-R_{j-1}(\sigma_j)}{b_j(\sigma_j)},\quad j=1,2,\dots.
\end{equation}
Since $T$ is expressed in the form~\eqref{eq:split}, they can be written as
\begin{equation}\label{eq:nleigsdsplit}
D_j=\sum_{i=0}^{d}d_{i}^jA_i,\quad j\ge 0,
\end{equation}
where $d_{i}^j$ denotes the $j$th rational divided difference corresponding to the scalar function $f_i$, which can be computed with a cheap and numerically stable procedure detailed in~\cite{guttel14}. Here we use this latter form~\eqref{eq:nleigsdsplit}, but in our implementation the $D_j$ matrices are not computed explicitly. Instead the solver works with them implicitly, operating with the $A_i$ matrices that appear in the definition of $T$.

The solver is implemented for the case of a general nonlinear function $T$. However, if the problem is rational, which is the case we are concerned in this paper, the degree $d$ is equal to $\max\{p,q\}$ if $T$ is a rational matrix-valued function of type $(p,q)$, and also the singularity set $\Gamma$ is equal to the set of poles of $T$. In that case, the interpolant $F_d$ of~\eqref{eq:nleigsq} is exact for any choice of the sampling points $\sigma_k$. In other words, $F_d$ is just a rewrite of $T$.

Once $F_d$ has been obtained, the problem $F_d(\omega)\xi=0$ is solved via linearization, that is, a linear eigenvalue problem is constructed
\begin{equation}\label{eq:nleigslin}
\Alin y=\omega \Blin y,
\end{equation}
whose eigenvalues $\omega$ are the same and whose eigenvectors have the form
\begin{equation}
y=\begin{bmatrix}
b_0(\omega)\xi\\\vdots\\b_{d-1}(\omega)\xi
\end{bmatrix}.
\end{equation}
The matrices $\Alin$ and $\Blin$ of the linearization~\eqref{eq:nleigslin} have a particular block structure,
\begin{equation}\label{eq:nleigsmatlin}
  \Alin=\begin{bmatrix}
    D_0       & D_1      & \dots  & D_{d-2}       & (D_{d-1}-\frac{\sigma_{d-1}}{\beta_d}D_d) \\
    \sigma_0I & \beta_1I &        &               & \\
              & \ddots   & \ddots &               & \\
              &          & \ddots & \beta_{d-2}I  & \\
              &          &        & \sigma_{d-2}I & \beta_{d-1}I
  \end{bmatrix},\quad
  \Blin=\begin{bmatrix}
    0 & 0                      & \dots  & 0                              &-\frac{D_d}{\beta_d} \\
    I & \frac{\beta_1}{\gamma_1}I &        &                                & \\
      & \ddots                 & \ddots &                                & \\
      &                        & \ddots & \frac{\beta_{d-2}}{\gamma_{d-2}}I & \\
      &                        &        & I                              & \frac{\beta_{d-1}}{\gamma_{d-1}}I
  \end{bmatrix}.
\end{equation}
We use the static NLEIGS variant~\cite{guttel14}, where the linearization matrices are created a priori and then the linear eigenproblem~\eqref{eq:nleigslin} is solved (as opposed to the dynamic variant where the approximation and linearization are built incrementally as the Krylov subspace grows). To solve the linear eigenproblem, we implement a customized version of the shift-and-invert Krylov--Schur method~\cite{stewart01}, as described next.

The dimension of the linear eigenproblem~\eqref{eq:nleigslin} is equal to $d\cdot N$, where $d$ is the number of terms in the rational approximation and $N$ is the dimension of the original nonlinear problem. Since this dimension may be quite large, it is important to exploit the block structure of the linearization matrices in order to solve the linear problem efficiently (in terms of memory and computational effort). The block structure is considered when operating with the matrices, and also in the management of the subspace basis, as explained next.

The linearization matrices~\eqref{eq:nleigsmatlin} are never built explicitly, and instead the Krylov--Schur method proceeds by operating with their nonzero blocks only.
To generate a new Krylov vector, we need to multiply the last vector of the basis with matrix
\begin{equation}\label{eq:nleigssinvert}
\Slin=(\Alin-\mu \Blin)^{-1}\Blin,
\end{equation}
where $\Alin$ and $\Blin$ are given in~\eqref{eq:nleigsmatlin}, and $\mu\in\Sigma$ is the shift (a value around which the eigenvalues are sought). We derive a set of recurrences that implicitly apply matrix $\Slin$ to a vector, by considering a block LU factorization of $(\Alin-\mu \Blin)$. The operations in these recurrences are expressed in terms of the problem matrices $A_i$ instead of the divided differences $D_i$. 
The required computations involve vector \emph{axpy} operations, 
matrix-vector products with the matrices, and the construction of $F_d(\mu)$ and its inverse. Rather than building the inverse explicitly, a sparse linear solver is used (usually via a factorization).

The block structure of the matrices $\Alin$ and $\Blin$ can also be exploited to allow a compact representation of the Krylov basis. The idea is to derive linear dependency relations among the $d$ blocks of the generated Krylov vectors, whose global dimension is $d\cdot N$. In this way, it is possible to define a basis $U_{k+d}$ of vectors of length $N$, from which all blocks of the Krylov basis $V_k$ can be reconstructed, resulting in the relation
\begin{equation}\label{eq:toarrep}
V_k=(I_d\otimes U_{k+d})G_k,
\end{equation}
for some matrix of coefficients $G_k$. That is, if the Krylov basis $V_k$ is divided in $d$ blocks of $N$ consecutive rows, $\{V_k^i\}_{i=0}^{d-1}$, then they can be expressed as
\begin{equation}\label{eq:toarblocksd}
V_{k}^i=U_{k+d}G_{k}^i,\quad i=0,\dots,d-1,
\end{equation}
where $\{G_k^i\}_{i=0}^{d-1}$ are the blocks of $G_k$. Both $U_{k+d}$ and $G_k$ must have orthonormal columns by construction. This is called the TOAR representation and has been described in~\cite{campos16} in the context of polynomial eigenvalue problems. We have adapted this technique to the NLEIGS linearization~\eqref{eq:nleigsmatlin}, so that a number of vector recurrences are employed to compute the columns of $U_{k+d}$ together with the entries of $G_k$ in the context of the Arnoldi iteration. The compact representation not only reduces the storage requirements for the basis in roughly a factor $d$, but it also reduces the computational cost associated with the orthogonalization of the basis vectors. Our implementation also incorporates additional optimizations such as restart and eigenvalue locking. 
Further details about the NEP module can be found in \cite{campos18}.
In summary, the main operations involved in the NLEIGS solver are orthogonalization and other operations with length-$N$ vectors, sparse matrix-vector products with the problem matrices $A$, matrix \emph{axpy} operations to form $F_d(\mu)$ explicitly, and sparse factorization of $F_d(\mu)$ for linear solves (also of size $N$). The last one is the most expensive operation, but it is much cheaper than a size $d\cdot N$ factorization in matrix $\Slin$~\eqref{eq:nleigssinvert} that would be required in a naive implementation of NLEIGS. Moreover, all the computation can be done in parallel (using MPI), enabling the solution of large-scale problems.

\section{Applications to metal-dielectric nanostructures}\label{sec:benchmarks}
In this section we study four interesting metal-dielectric configurations, from where numerical approximations to resonances and resonant modes are computed. 
These configurations are used in Section \ref{sec:results} for comparing the error convergence in standard $h$- and $p$-FE, against the novel strategies presented in Section \ref{sec:strategies}.
First, geometries with simple symmetries are introduced. 
This allow us to determine exact pairs explicitly for both TM and TE polarizations. Finally, we describe a more demanding test case.  

The first two configurations serve as Benchmarks for testing strategies \ref{sec:h_str} and \ref{sec:p_str} applied to problems with non-dispersive and piecewise constant material properties.
In the last two configurations we are motivated by realistic applications in nano-photonics, where a metal coating is introduced. For these, three different relative permittivity models are used:
$\eps_v:=1$ (\emph{Vacuum}), $\eps:=2$ (\emph{Silica}), and $\eps_{metal}$ (\emph{Gold}), modeled by a sum of Drude-Lorentz terms \eqref{eq:drude_lorentz}. For $\eps_{metal}$ we use the data given in Table \ref{gold_data} gathered in \cite{Rakic98}. This model of Gold has been extensively tested and has validity for $\om\in[0.5,6.5]\,eV$, where $eV$ denotes \emph{electron volt}.

\subsection{Scaling}\label{sec:scaling}
In finite precision arithmetic we prefer to work with dimensionless quantities, where we transform from dimensionless variables to physical variables (denoted with \textasciitilde). We use common physical constants in SI units: $\hbar$ is the scaled Planck's constant, $c$ is the speed of light in vacuum, and 
$e$ is the electron charge. 
In the numerical computations, we use the scaling factors $W=eV/\hbar$ in \emph{Hertz} and $L=2\pi c/W$ in \emph{meters}. Then, we define the dimensionless quantities
\begin{equation}
	x =\fr{\tilde x}{L},\quad {\omega}=\fr{\tilde \om}{W}\quad \hbox{satisfying}\quad LW=2\pi c.
	\label{scaling}
\end{equation}
The resulting length factor is $L=1239.842\,nm$, from where our spectral window becomes numerically equivalent to $eV$ scaling.

\begin{table}[]
\centering
\begin{tabular}{lll}
$\eps_\infty=1$  & $\om_p=$ 9.03           &            -          \\
$f_0=$ 0.76             & $\om_0=$ 0              & $\gamma_0=$ 0.053     \\
$f_1=$ 0.024            & $\om_1=$ 0.415          & $\gamma_1=$ 0.241     \\
$f_2=$ 0.01             & $\om_2=$ 0.83           & $\gamma_2=$ 0.345     \\
$f_3=$ 0.071            & $\om_3=$ 2.969          & $\gamma_3=$ 0.87      \\
$f_4=$ 0.601            & $\om_4=$ 4.304          & $\gamma_4=$ 2.494     \\
$f_5=$ 4.384            & $\om_5=$13.32           & $\gamma_5=$ 2.214   
\end{tabular}
\caption{Drude Lorentz data for Gold, taken from \cite{Rakic98}, with time convention $e^{-i\om t}$.}
\label{gold_data}
\end{table}

\subsection{Benchmarks in 1D}\label{sec:models_1D}

We focus on the problem described in Section \ref{sec:resonances1D} for \emph{even} refractive index profiles.
The computational domain is reduced to $I:=I_a^+:=(0,a)$, by imposing $u(0)=0$. This choice allows us to approximate the odd eigenfunctions of \eqref{eq:Helmholtz} and \eqref{eq:formalDtN}. 
For the derivation of reference solutions, we consider the following problem with $a=1$.


Let $\{x_j\}_{j=0}^{N}$ 
denote nodes with $x_0=0,\,x_N=1$ and introduce the partition consisting of $I_j:=(x_{j-1},x_{j})$, $j=1,2,\ldots,N$. Assume that the refractive index $n=\sqrt{\epsilon}$ is the constant $n_j$ over $I_j$ and let $u_j$ denote the restriction of $u$ to $I_j$. Furthermore, we assume that $n=1$, for $x>1$. Then $(\rho_j,\eta_j)=(1,n_j^2)$ for the TM-case and $(\rho_j,\eta_j)=(1/n_j^2,1)$ for the TE-case. The coupled problems for the TE/TM-case reads:
Find $(u_1,u_2,\ldots,u_N,\om)$ such that
\begin{equation}
	-\dtot{}{x}\left(\rho_j u_j'\right)-\om^2 \eta_j u_j=0,\,\,x\in I_{j},
	\label{eq:model_piecewise}
\end{equation}
where $u_{1}$ and $u_{N}$ satisfy the boundary conditions
\begin{equation}
	u_{1}(0)=0,\,\,\hbox{and}\,\,u'_{N}(1)=i\om\, u_{N}(1),
	\label{eq:boundary_piecewise}
\end{equation}
and the solutions of \eqref{eq:model_piecewise} are subject to the compatibility conditions
\begin{equation}
	u_{j}(x_{j})=u_{j+1}(x_{j}),\,\,\hbox{and}\,\,\rho_{j} u'_{j}(x_{j})=\rho_{j+1} u'_{j+1}(x_{j}),\,\,\hbox{for}\,\,j=1,\ldots,N-1.
	\label{eq:compatibility_piecewise}
\end{equation}
The general solutions to \eqref{eq:model_piecewise} can be written in the form
\begin{equation}
	u_j:=A_j e^{i n_j\om x} + B_j e^{-i n_j \om x}, \,\,x\in I_{j},
	\label{eq:solution_piecewise}
\end{equation}
with the $2N$ unknowns $A_j,\,B_j,\,j=1,2,\cdots,N$. The conditions \eqref{eq:boundary_piecewise} and \eqref{eq:compatibility_piecewise} imply that the unknowns are solutions of a matrix system 
\begin{equation}
	\scatt (\om)z=0,\quad z=(A_1,B_1,A_2,B_2,\cdots,A_N,B_N)^T,
	\label{eq:system_piecewise}
\end{equation}
where the entries corresponding to boundary conditions are placed in the last two rows.
If there exist nontrivial solutions to \eqref{eq:system_piecewise}, they satisfy $\det [\scatt(\om_m)]=0$ for some value $\om_m$ that corresponds to a resonance of the system.

\subsubsection{Slab problem}\label{sec:slab1d} 
In this section, we consider the problem \eqref{eq:model_piecewise}, \eqref{eq:boundary_piecewise}, and \eqref{eq:compatibility_piecewise} for the case $N=2$, with $n(x)=n_1$ for $x\in I_1:=(0,0.5)$, and $n(x)=1$ for $x\in I_2:=(0.5,1)$. The corresponding exact resonances for TM polarization are given by
\begin{equation}
	e^{2in_1 \om a}=-\mu,\,\, \om_m=\fr{(2m+1)\pi-i\hbox{Log}(\mu)}{2n_1 a}, \,\, \mu=\fr{n_1+1}{n_1-1},
	\label{eq:eig_slabTM}
\end{equation}
with the corresponding eigenfunctions as in \eqref{eq:solution_piecewise}, with
\begin{equation}
		2\frac{A_2}{A_1}=(n_1+1)e^{i \om a(n_1-1)}+(n_1-1)e^{-i \om a(n_1+1)}, B_1=-A_1,\,B_2=0.
	\label{eq:fun_slabTM}
\end{equation}
Similarly, the corresponding exact resonances for TE polarization are given by:
\begin{equation}
	e^{2in_1 \om a}=\mu,\,\, \om_m=\fr{2m\pi-i\hbox{Log}(\mu)}{2n_1 a}, \,\, \mu=\fr{n_1+1}{n_1-1},
	\label{eq:eig_slabTE}
\end{equation}
with the corresponding eigenfunctions as in \eqref{eq:solution_piecewise}, with
\begin{equation}
		2n_1\frac{A_2}{A_1}=(n_1+1)e^{i \om a(n_1-1)}-(n_1-1)e^{-i \om a(n_1+1)}, B_1=-A_1,\,B_2=0.
	\label{eq:fun_slabTE}
\end{equation}

\subsubsection{Multiple slab problem}\label{sec:multislab1d} 

Split the interval $I:=(0,1)$ in four uniform intervals $I_j$ of length $1/4$ and let $n:=(1,10,2,5)^T$ denote a vector with the
refractive indexes $n_j$. Using this refractive index profile results in an eigenvalue problem that is more demanding for FEM than the \emph{slab problem}. We compute very accurate Newton reference eigenvalues $\om_m$ from 
$\det [\scatt(\om_m)]=0$, with $\scatt (\om)$ given in \eqref{eq:system_piecewise}. 
For simplicity, we only study eigenvalue convergence of this problem for
\begin{equation}
	\begin{array}{lr}
		T\!M: & \om_{14}=10.105\,348\,365\,841-0.065\,215\,027\,533i, \\
		T\!E: & \om_{14}=10.156\,176\,418\,185-0.048\,229\,922\,564i.
	\end{array}
	\label{eq:reference_eigs_multislab}
\end{equation}


\subsection{Benchmarks in 2D}\label{sec:models_2D}
\begin{table}
\robustify\bfseries
\centering
\small
\begin{tabular}{rrS[table-format=2.9, detect-weight]
                  S[table-format=2.9, detect-weight]
                  S[table-format=2.9, detect-weight]
                  S[table-format=2.9, detect-weight] }
\toprule
{$m$} & {$j$} & {$\re \om_{T\!M}$} & {$\im \om_{T\!M}$} & {$\re \om_{T\!E}$} & {$\im \om_{T\!E}$} \\
\midrule
0 & 1  & 1.771128241  & -0.040209598  & 3.028519953  & -0.249632742  \\
1 & 2  & 2.507165546  & -0.308861246  & 1.276108857  & -0.022849842  \\
2 & 3  & 2.637054638  & -0.400052296  & 1.857593240  & -0.103922955  \\
3 & 4  & 3.312034818  & -0.666590209  & 2.444174749  & -0.314200015  \\
4 & 5	 & 3.406691805  & -0.693670033  & 2.506083838  & -0.291213845  \\
5 & 6	 & 3.525244074  & -0.743331707  & 2.324925787  & -0.200153901  \\
6 & 7  & 3.613595702  & -0.818203122  & 3.126303493  & -0.462545189  \\
7 & 8  & 3.671987538  & -0.878964710  & 2.510146419  & -0.300384680  \\
8 & 9  & 3.720376782  & -0.925532212  & 5.549482036  & -0.741472675  \\
9 & 10 & 3.762296208  & -0.963866600  & 3.201508932  & -0.529576832  \\
\bottomrule
\end{tabular}




\caption{\emph{Reference eigenvalues for the \emph{single coated disk} problem described in Section \ref{sec:SDC}.}}
\label{tab:CSD_reference}
\end{table}
The next two problems have radial symmetry centered at the origin, and the solutions expressed 
in polar coordinates $(r,\theta)$, will be written in terms of Bessel and Hankel functions of integer order $m$.
In this simple case outgoing solutions of \eqref{eq:master_eq} satisfy
\begin{equation}
	u=H_m^{(1)}(\om R)\left(
	\begin{array}{c}
		\cos m\theta \\
		\sin m\theta
	\end{array} \right),
	\,\,\,\hbox{for}\,\,\, x\in\partial B(0,R),\,\,\,\hbox{and}\,\,\, m\in \mx Z,
	\label{eq:outgoing}
\end{equation}
where \,supp$\,(n-1)\subset B(0,R)$. 
In subsections \ref{sec:SD} and \ref{sec:SDC}, we present solutions satisfying \eqref{eq:master_eq} and \eqref{eq:outgoing} for specific permittivity profiles.

\begin{figure}
	\begin{tikzpicture}[thick,scale=1.0, every node/.style={scale=0.9}]
		
		\draw(  0.00,14.95) node {\includegraphics[scale=0.53]{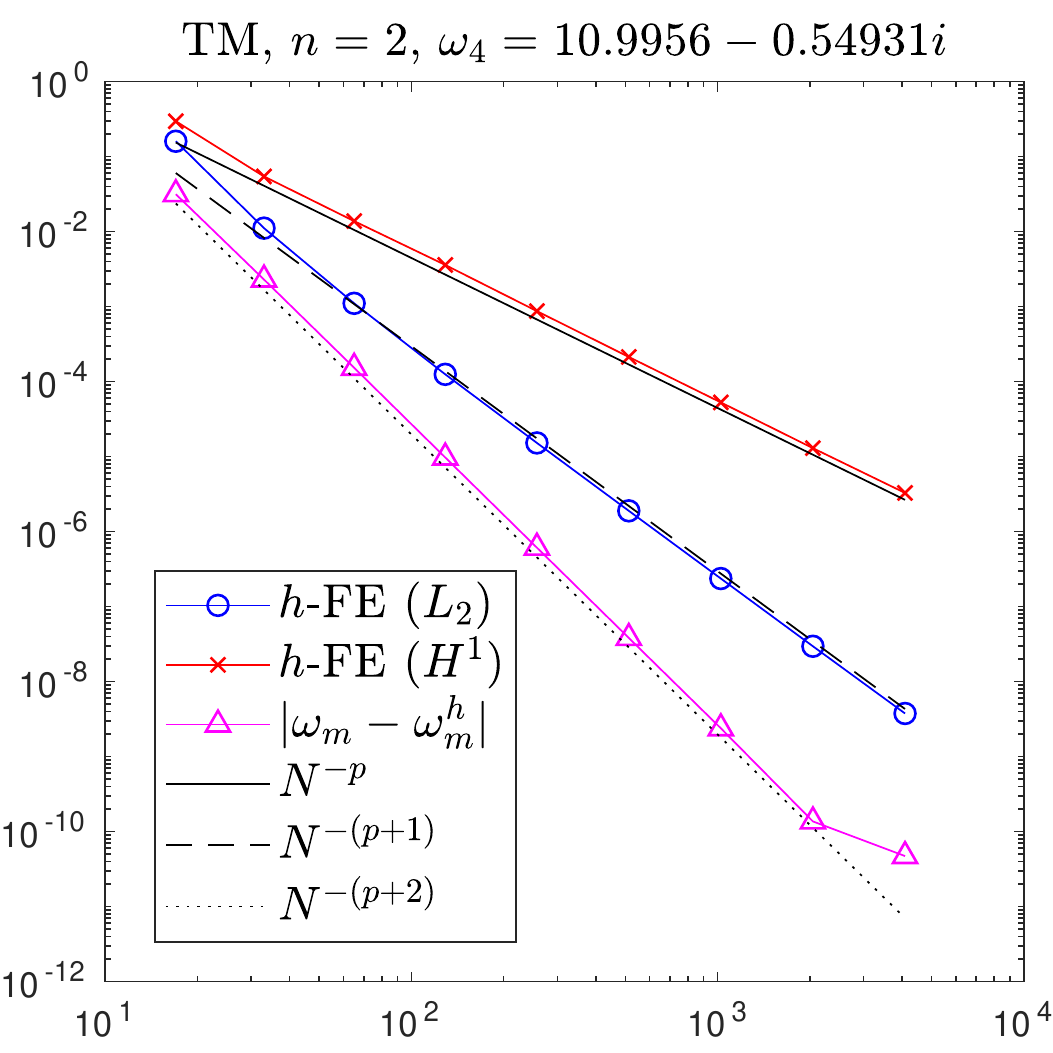}  };
		\draw(  5.50,14.95) node {\includegraphics[scale=0.53]{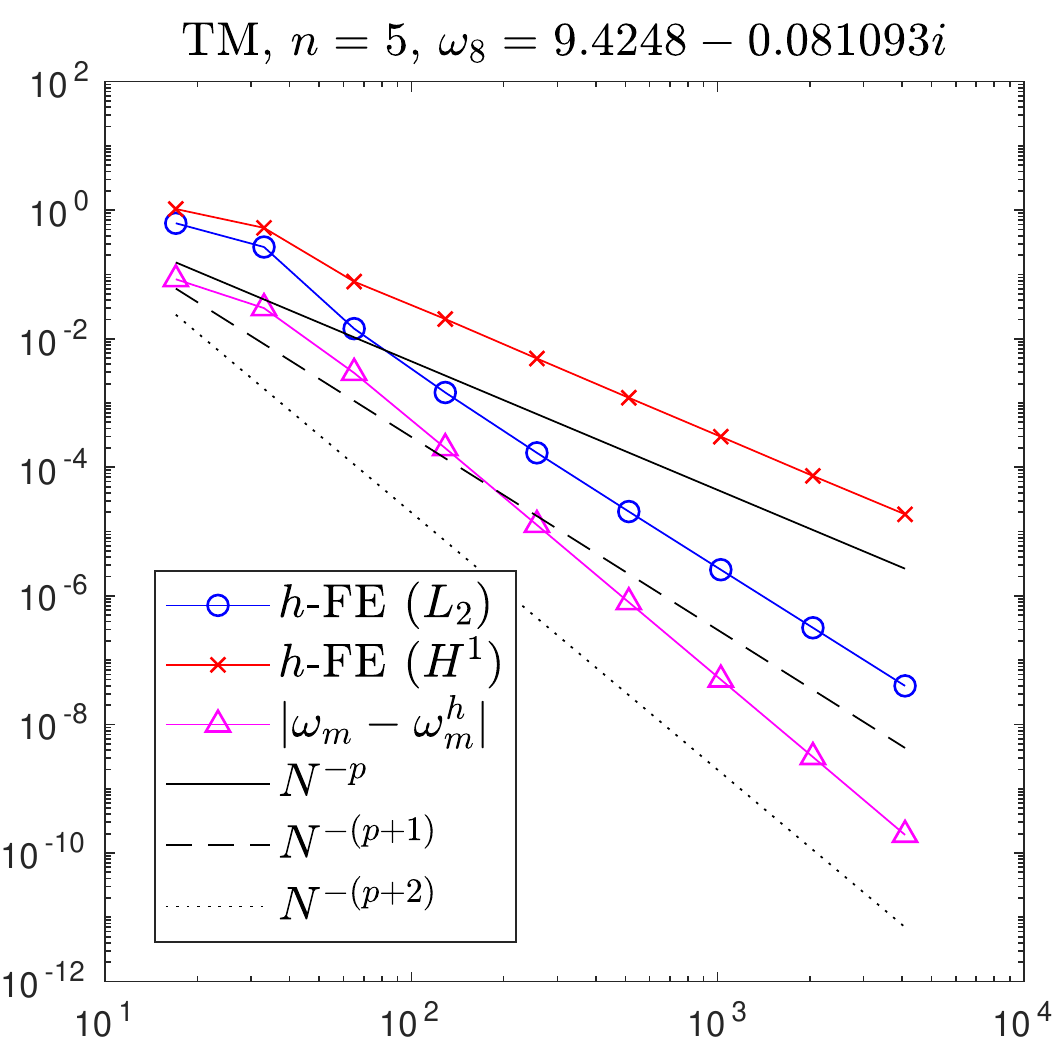}  };
		\draw( 11.00,14.95) node {\includegraphics[scale=0.53]{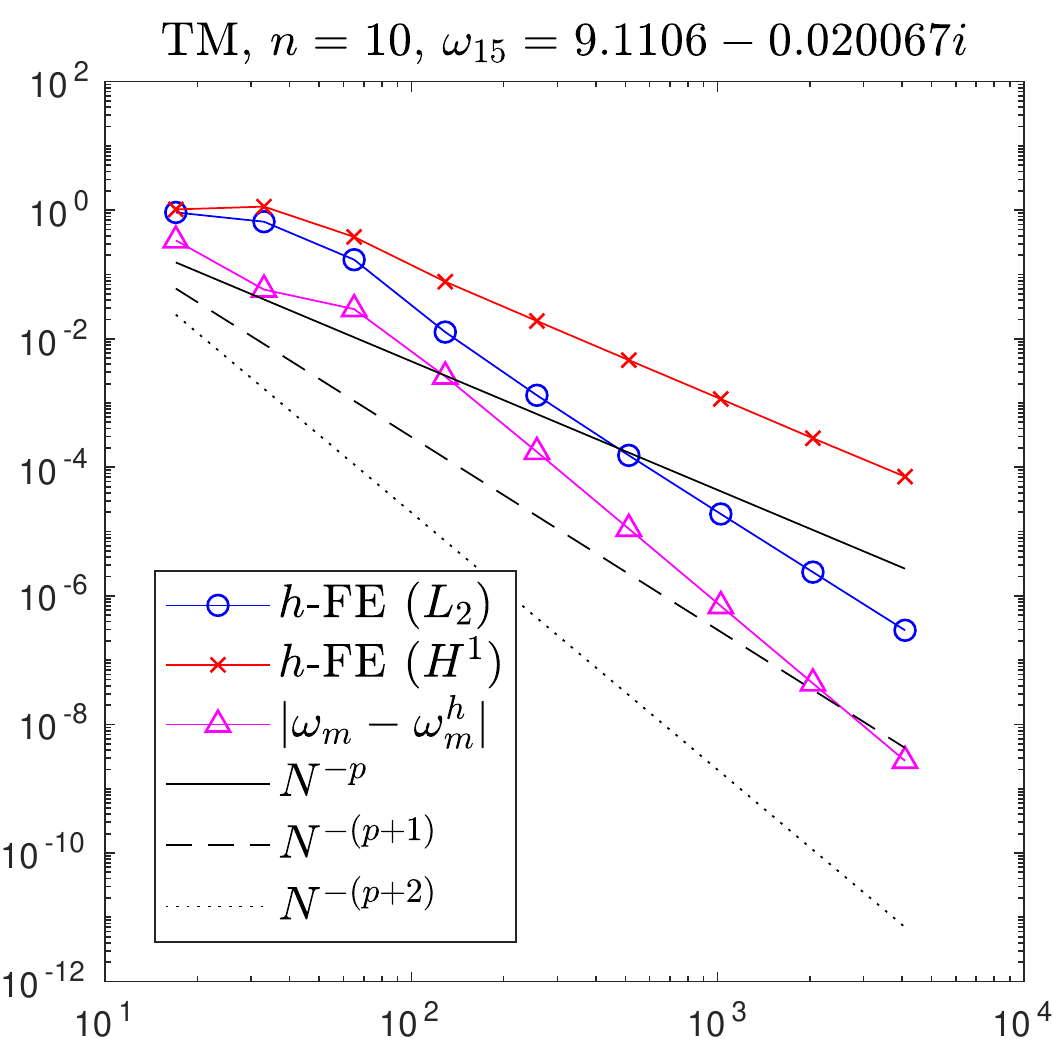} };
		
		\draw(  0.00,10.35) node {\includegraphics[scale=0.53]{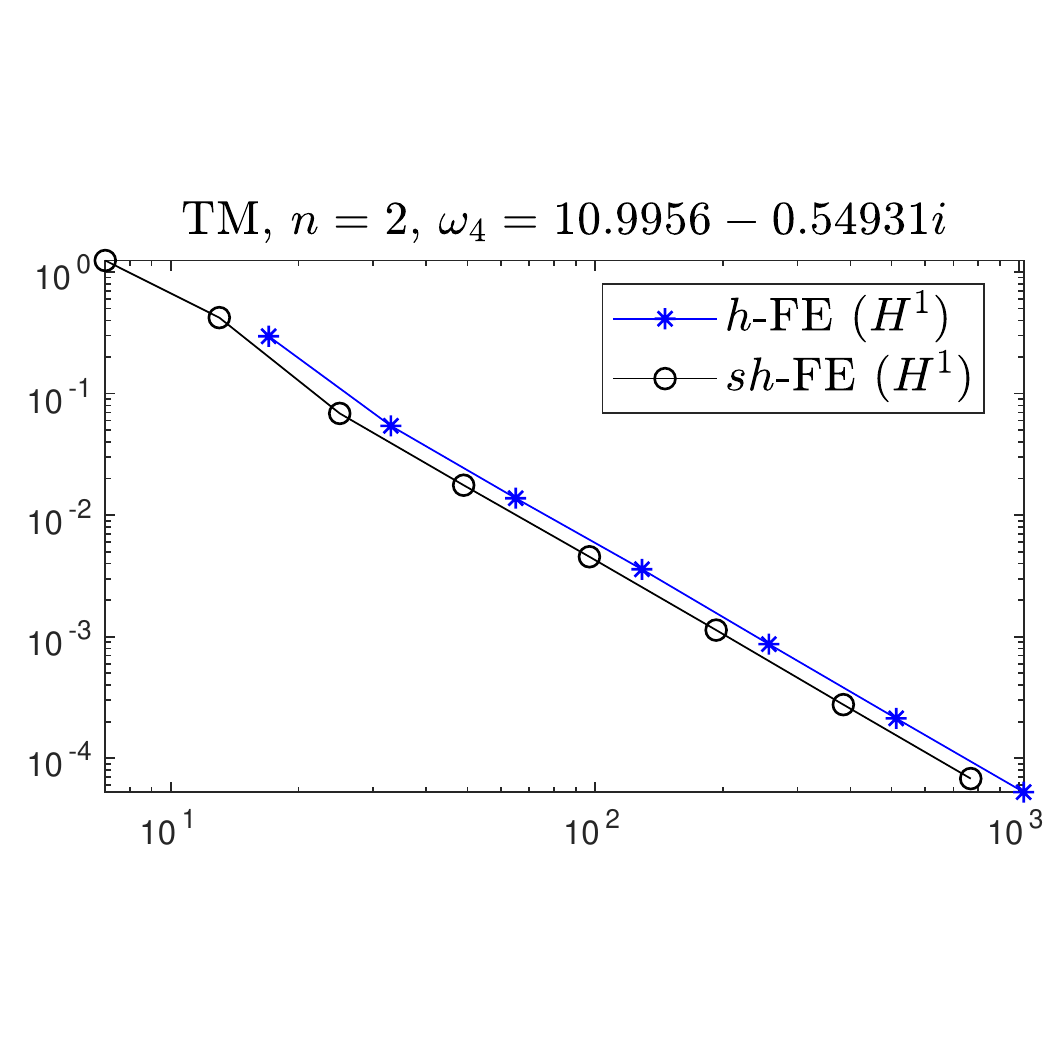}  };
		\draw(  5.50,10.35) node {\includegraphics[scale=0.53]{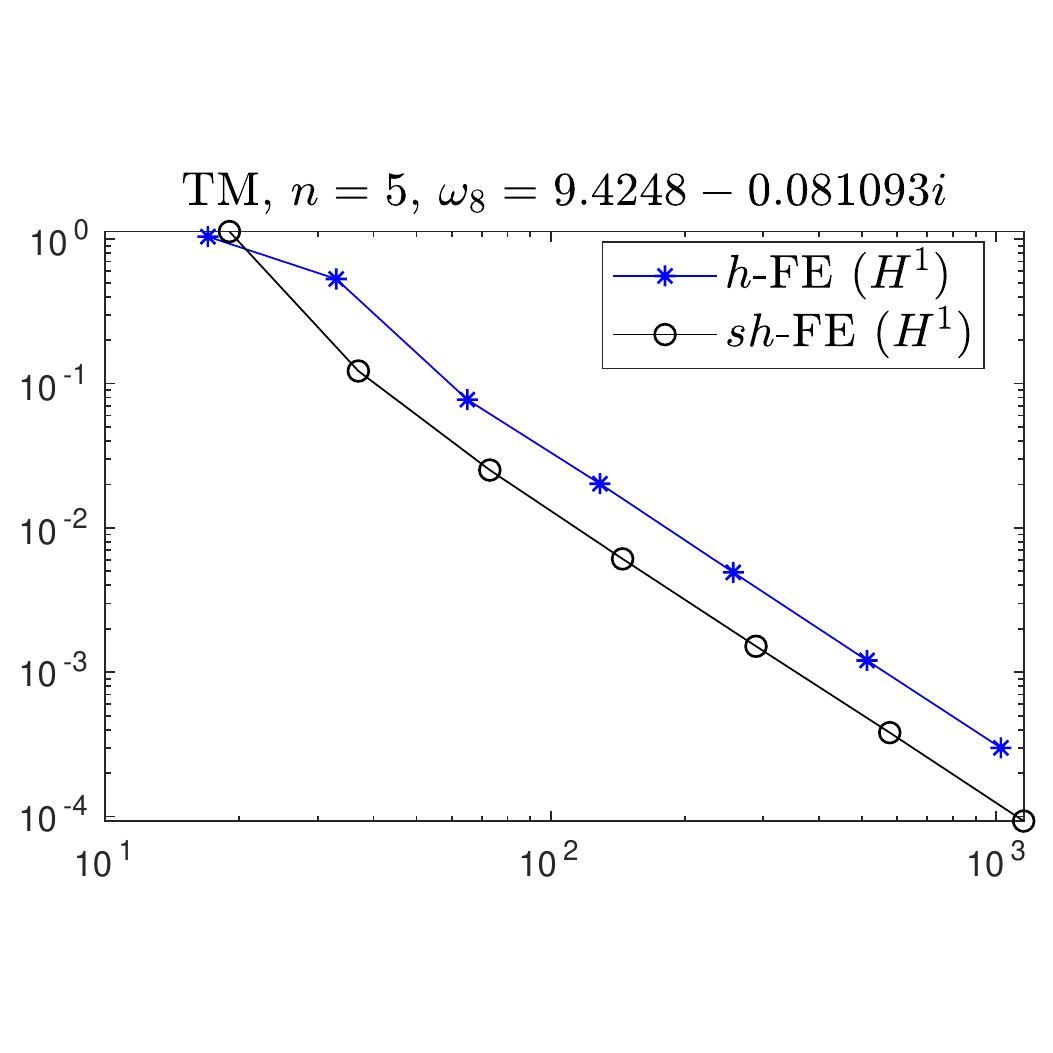}  };
		\draw( 11.00,10.35) node {\includegraphics[scale=0.53]{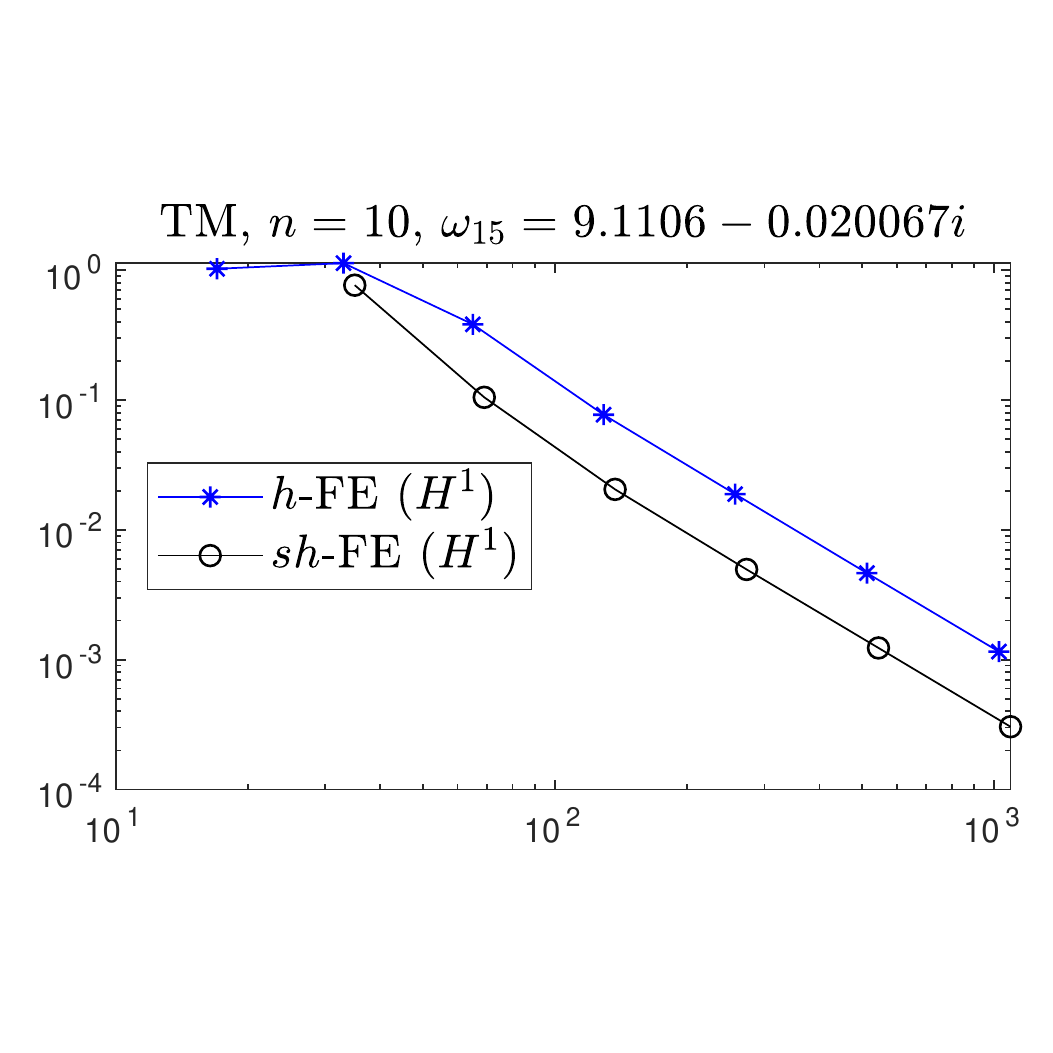}};
		
		\draw(  0.00, 6.90) node {\includegraphics[scale=0.53]{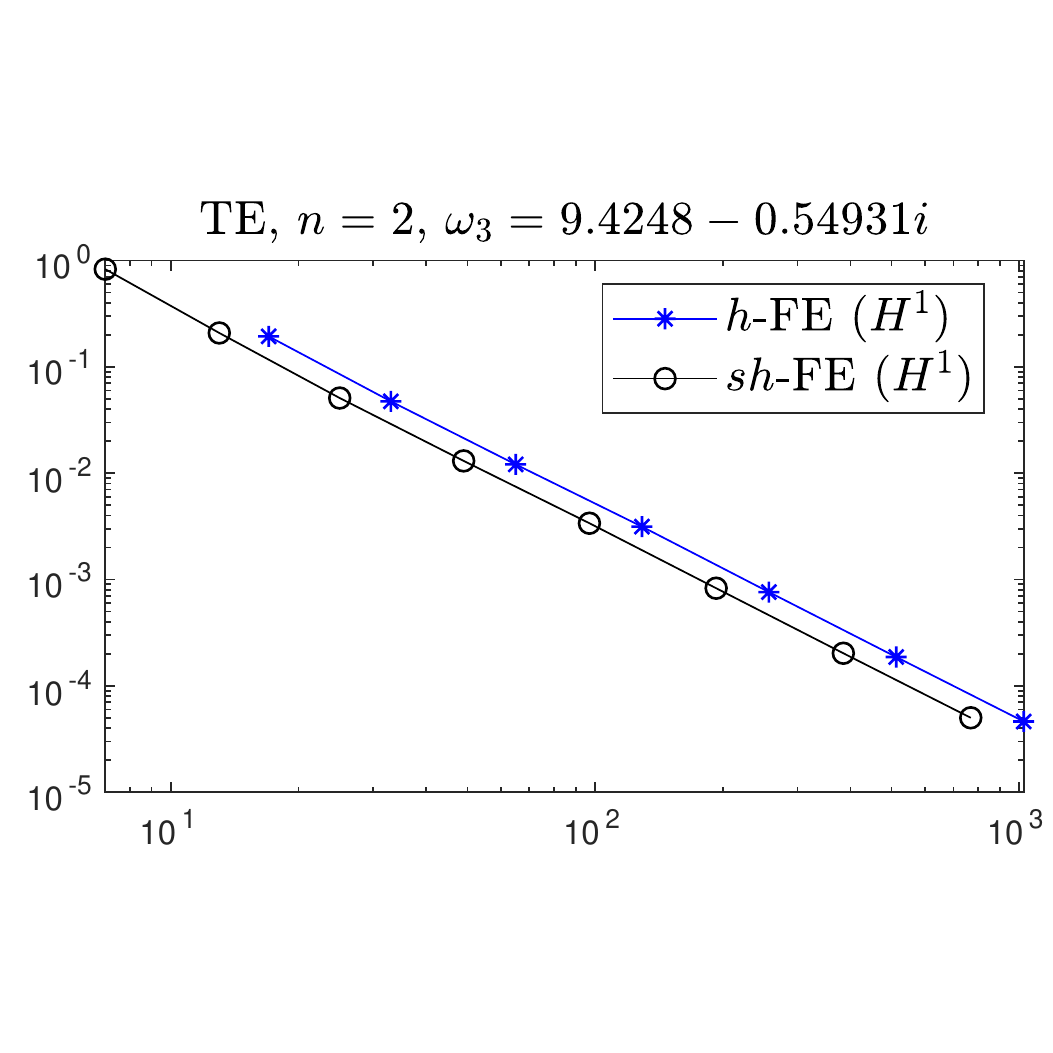}  };
		\draw(  5.50, 6.90) node {\includegraphics[scale=0.53]{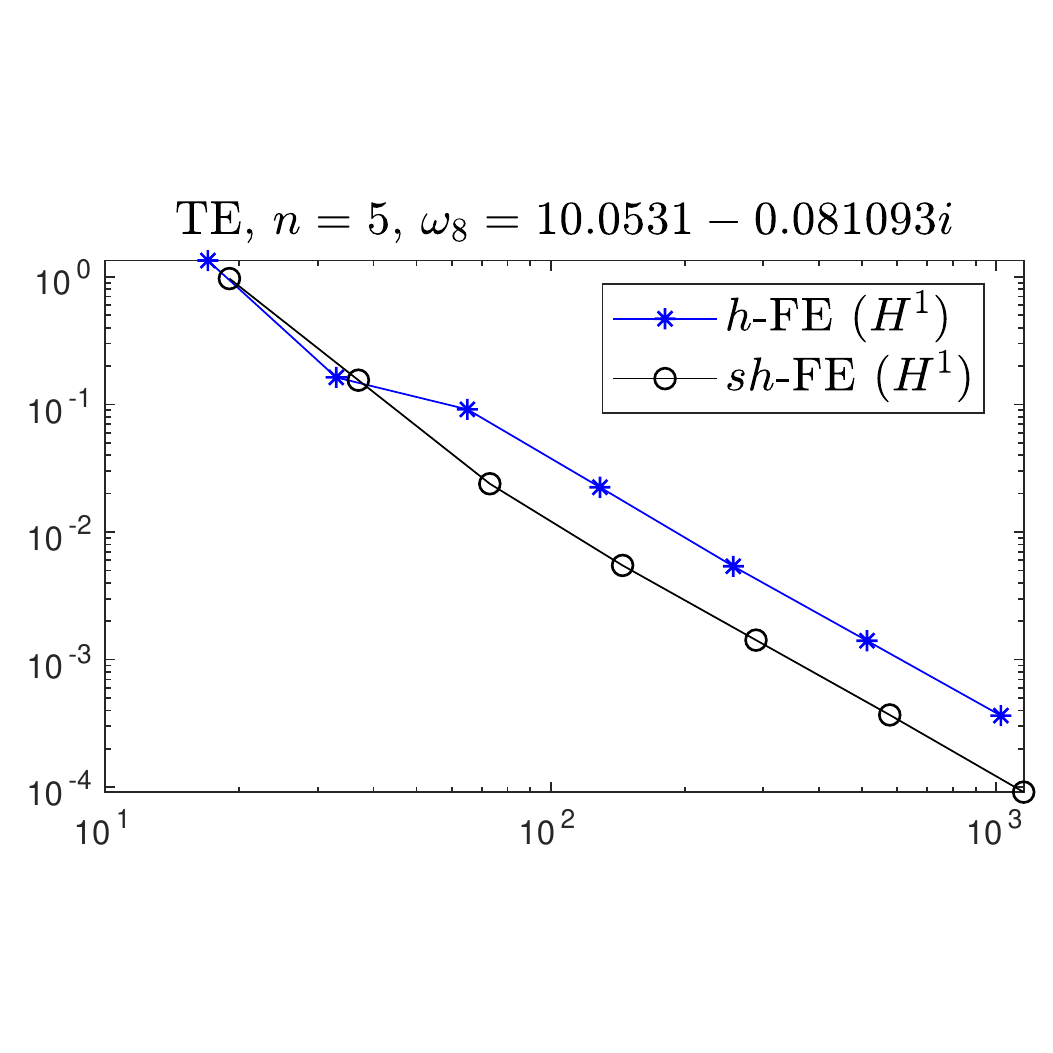}  };
		\draw( 11.00, 6.90) node {\includegraphics[scale=0.53]{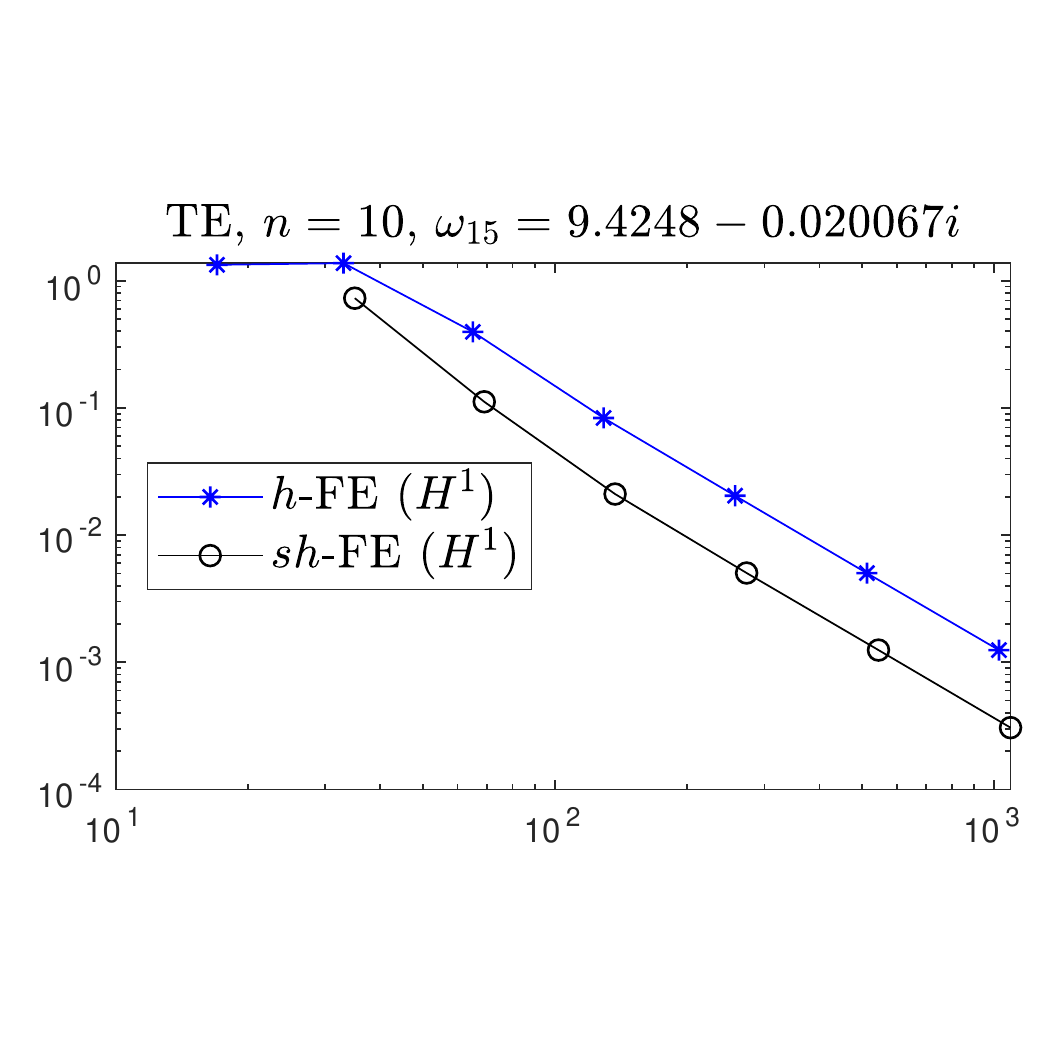}};
		
		\draw(  0.00, 3.45) node {\includegraphics[scale=0.53]{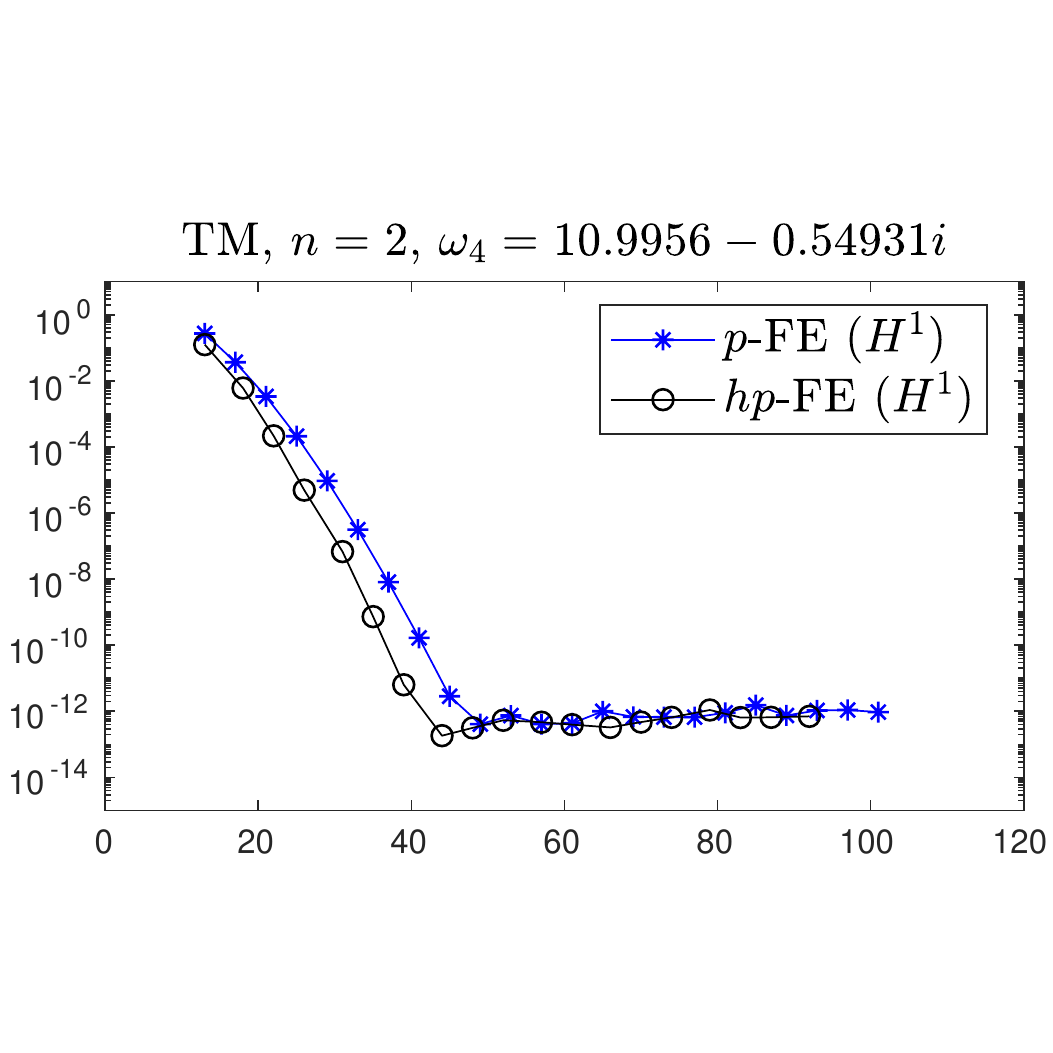}  };
		\draw(  5.50, 3.45) node {\includegraphics[scale=0.53]{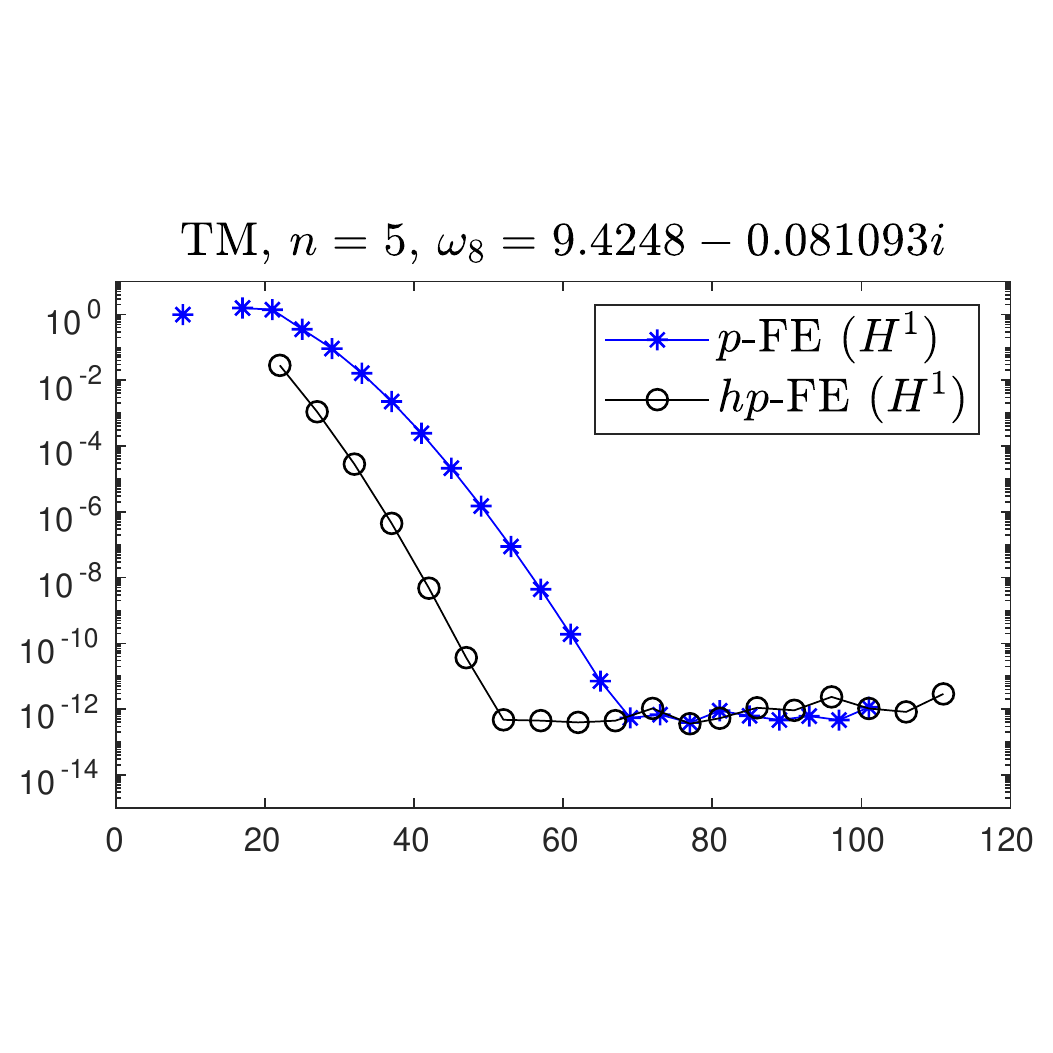}  };
		\draw( 11.00, 3.45) node {\includegraphics[scale=0.53]{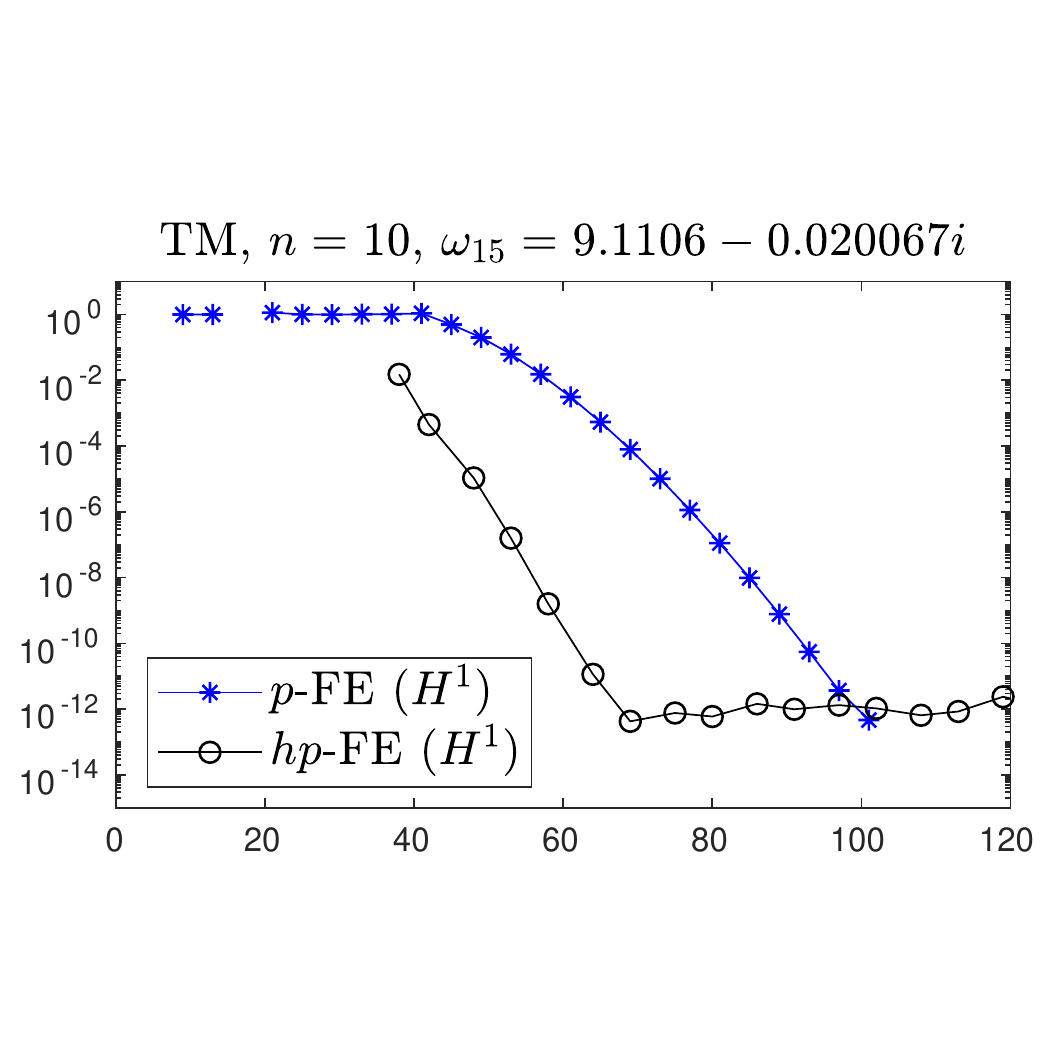}  };
		
		\draw(  0.00, 0.00) node {\includegraphics[scale=0.53]{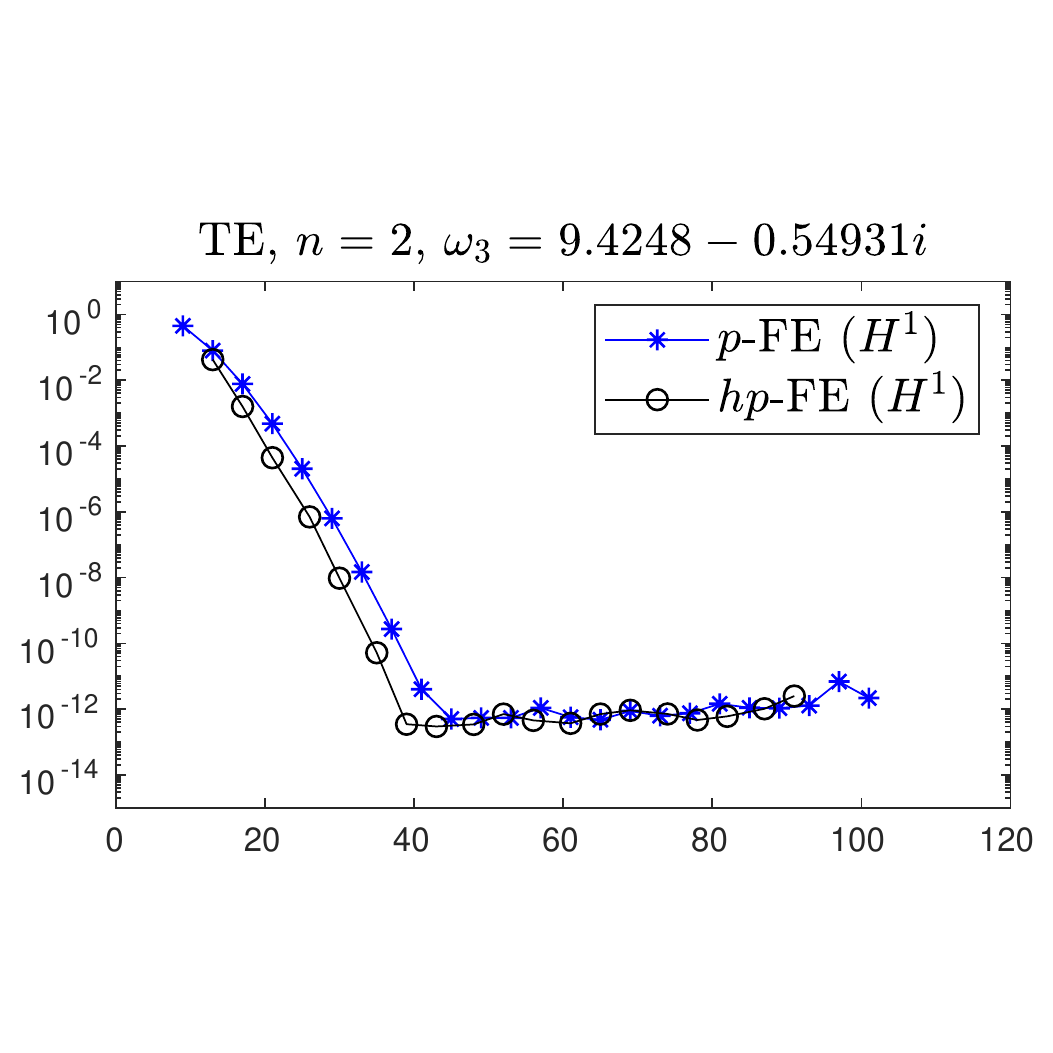}  };
		\draw(  5.50, 0.00) node {\includegraphics[scale=0.53]{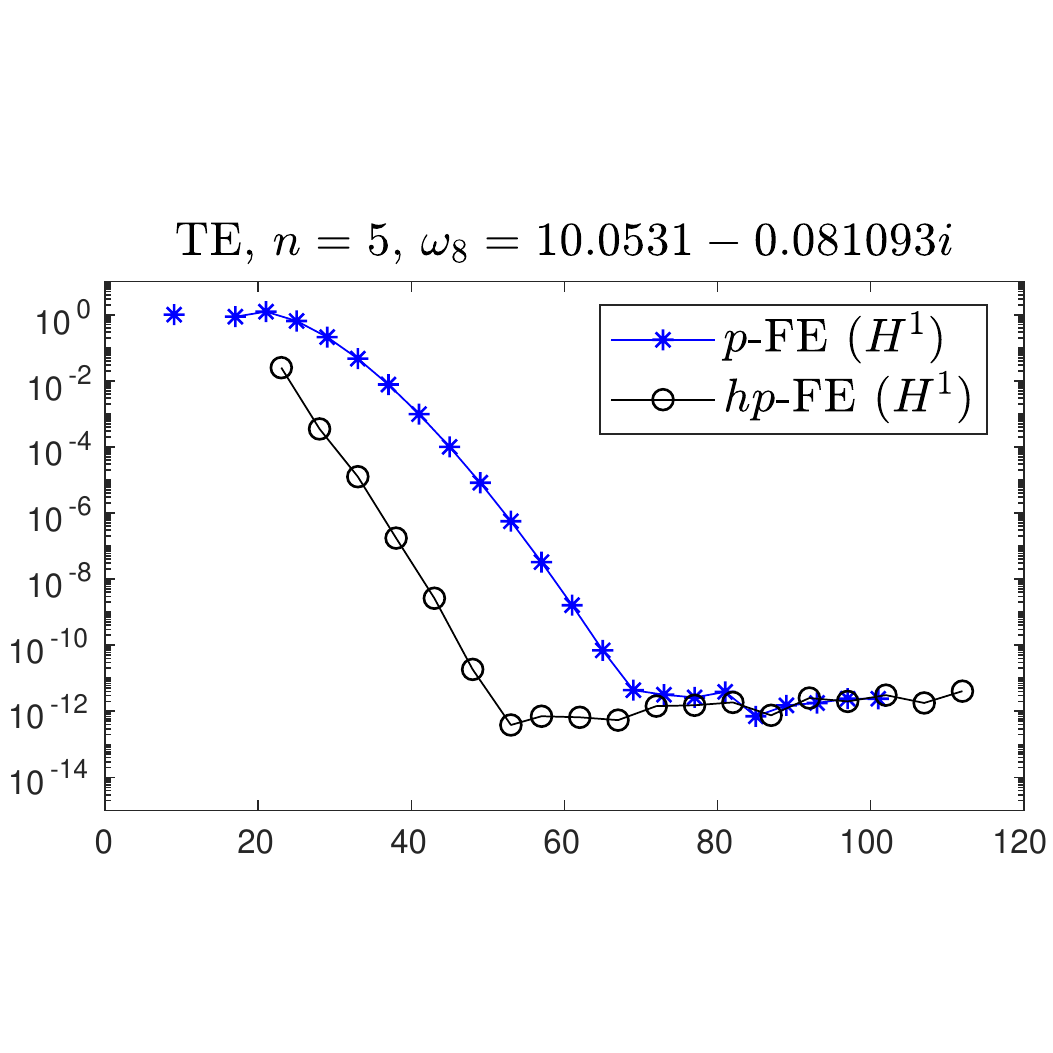}  };
		\draw( 11.00, 0.00) node {\includegraphics[scale=0.53]{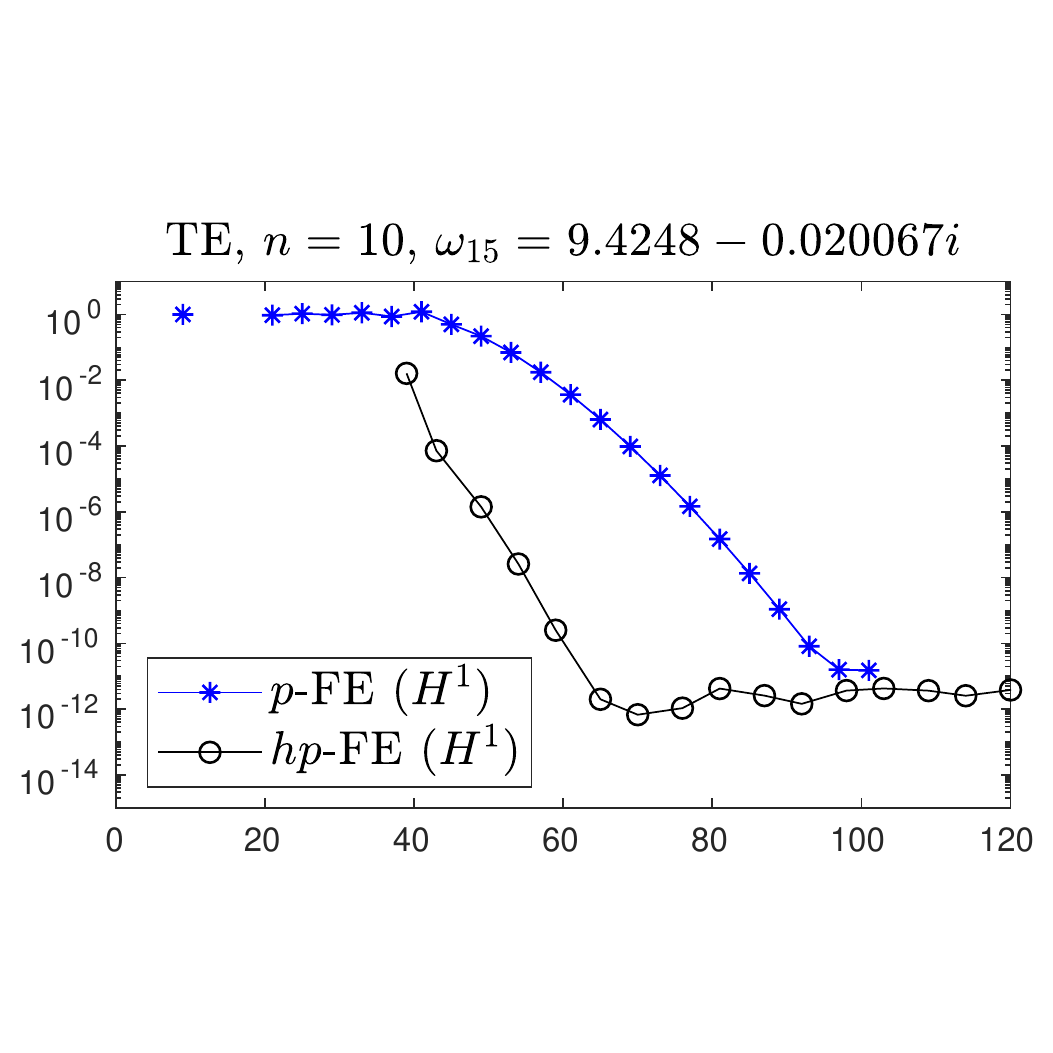}  };
	\end{tikzpicture}
	\vspace*{-10mm}
	\caption{\emph{Convergence plots (Relative errors vs. N) for the \emph{slab} problem \ref{sec:slab1d}
	in TM and TE polarizations. The upper horizontal stripe corresponds to classical $h$-FE error convergence for $n_1=2,5,10$ consecutively. Optimal convergence rates \eqref{eq:optimal_estimates} are indicated with solid, dashed, and dotted black lines. The following horizontal stripes correspond to classical $h$-FE and $p$-FE convergence marked with stars, and convergence with the a-priori strategies $sh$-FE (\ref{sec:h_str}), and $hp$-FE (\ref{sec:p_str}) are marked with circles.}}
	\label{fig:convergence_1D}
\end{figure}

\begin{figure}
	\begin{tikzpicture}[thick,scale=1.0, every node/.style={scale=0.9}]
		
		\draw(  0.00, 0.00) node {\includegraphics[scale=0.53]{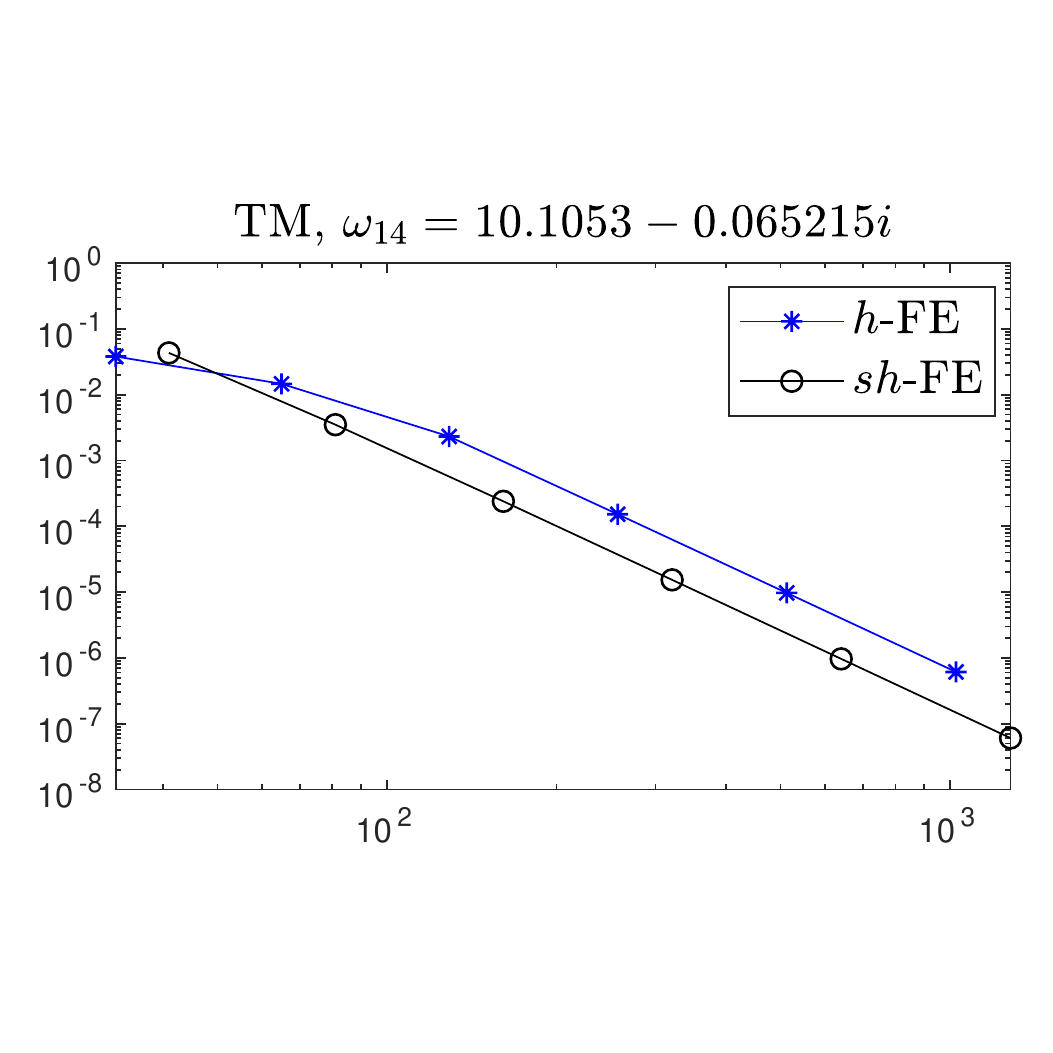}  };
		\draw(  5.50, 0.00) node {\includegraphics[scale=0.53]{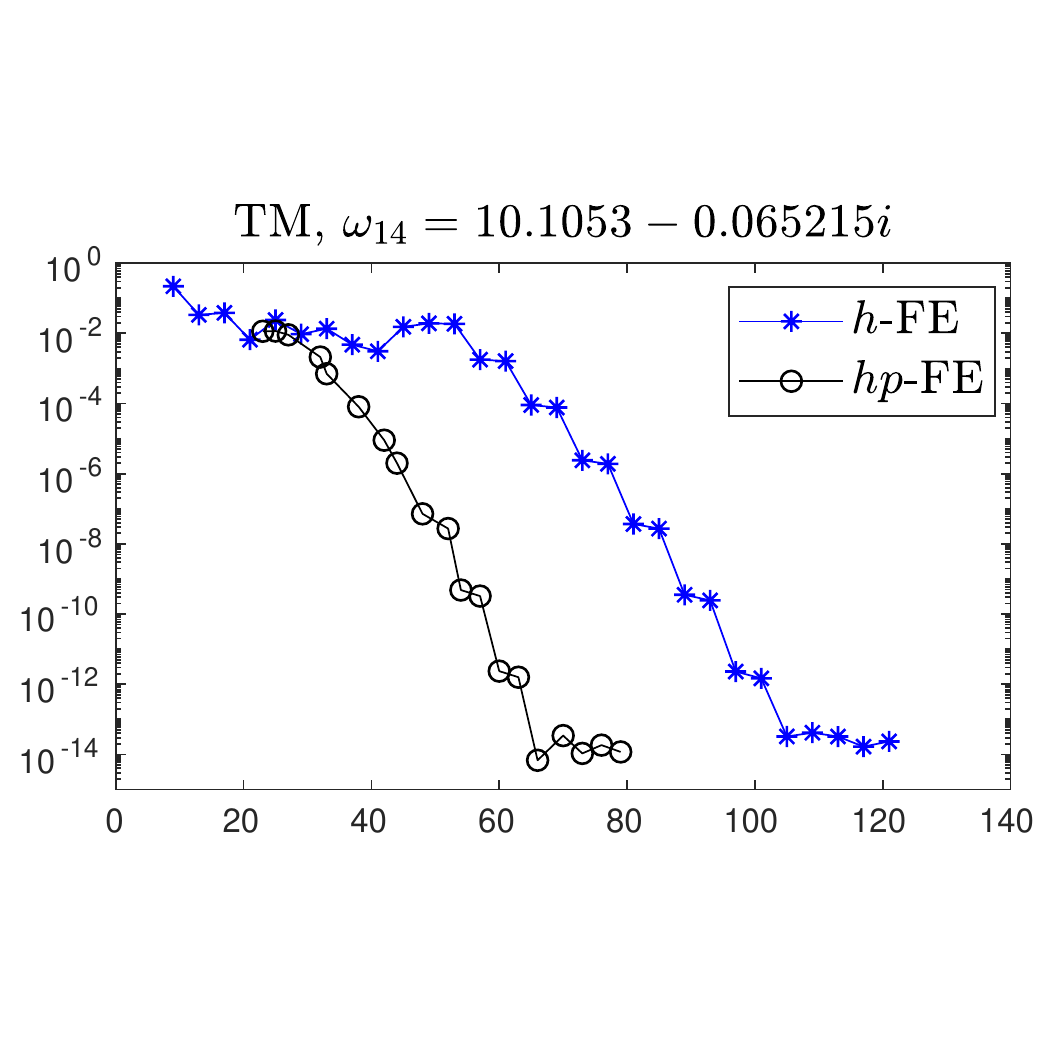}  };
		\draw( 11.00, 0.00) node {\includegraphics[scale=0.53]{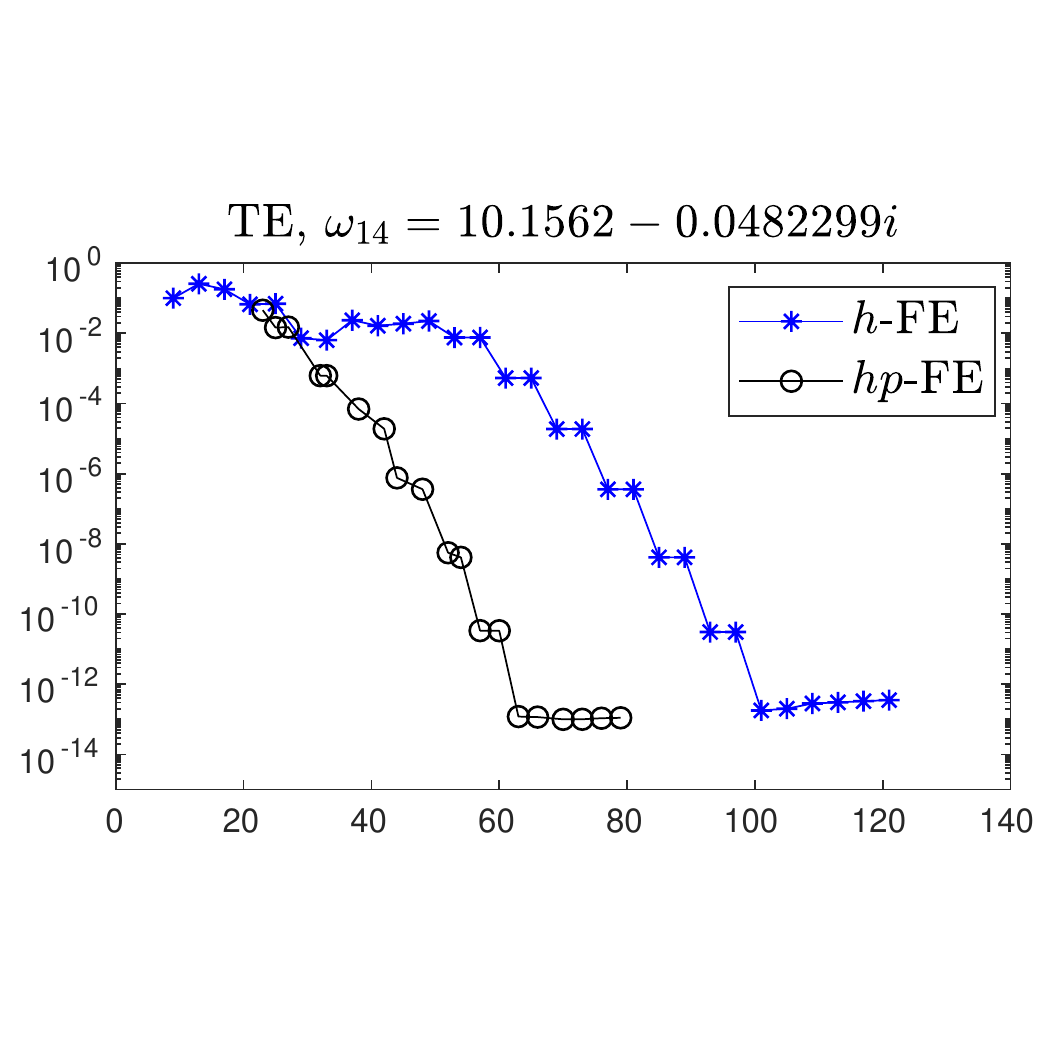}  };
	\end{tikzpicture}
	\vspace*{-10mm}
	\caption{\emph{Convergence plots (Relative errors vs. N) for the \emph{multiple slab} problem \ref{sec:multislab1d}
	in TM and TE polarizations. Classical $h$-FE and $p$-FE convergence are marked with stars, and convergence with the a-priori strategies $sh$-FE (\ref{sec:h_str}), and $hp$-FE (\ref{sec:p_str}) are marked with circles.} }
	\label{fig:convergence_multi1D}
\end{figure}

\subsubsection{Single disk problem}\label{sec:SD} 

Denote by $u=u_1,\,n=n_1$ the restrictions of $u,n$ to $\Om_1:=B(0,a)$, and set $n=n_2=1$ elsewhere.
The corresponding exact eigenfunctions to \eqref{eq:master_eq} and \eqref{eq:outgoing} read:
\begin{equation}
	u_1=
	N_m J_m(n_1 \om r)\left(
	\begin{array}{c}
		\cos m\theta \\
		\sin m\theta
	\end{array} \right),\,\, 
	u_2=
	H_m^{(1)}(\om r)\left(
	\begin{array}{c}
		\cos m\theta \\
		\sin m\theta
	\end{array} \right)
	,\,\, N_m:=\fr{H_m^{(1)}(a \om)}{J_m(a n_1 \om)}.
	\label{eq:exact}
\end{equation}
The eigenvalues $\om$ corresponding to $m=0$ are \emph{simple} and those corresponding to $m>0$ are \emph{degenerated} and have algebraic multiplicity $\alpha=2$.
The exact eigenvalue relationship for TM and TE can be written as
\begin{equation}
	J_m(a n_1 \om) H_m^{(1)\prime}(a \om)-g\,J'_m(a n_1 \om) H_m^{(1)}(a \om) =0,
	\label{eq:reson_SD}
\end{equation}
where $g=n_1,\,g=1/n_1$ corresponds to the TM polarization and TE polarization respectively.

\begin{figure}
	\begin{tikzpicture}[thick,scale=1.0, every node/.style={scale=0.9}]
		
		\draw(  0.00, 10.5) node {\includegraphics[scale=0.53]{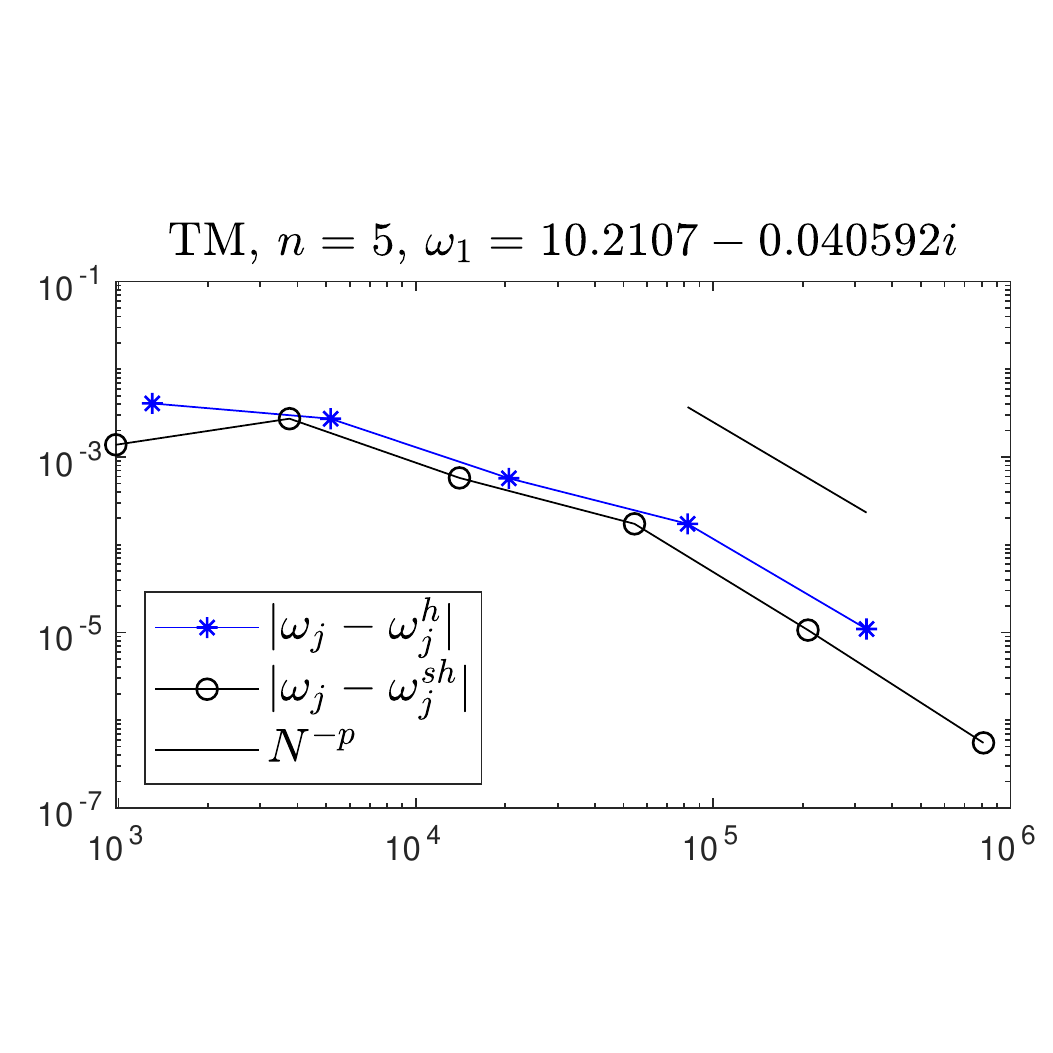}  };
		\draw(  5.50, 10.5) node {\includegraphics[scale=0.53]{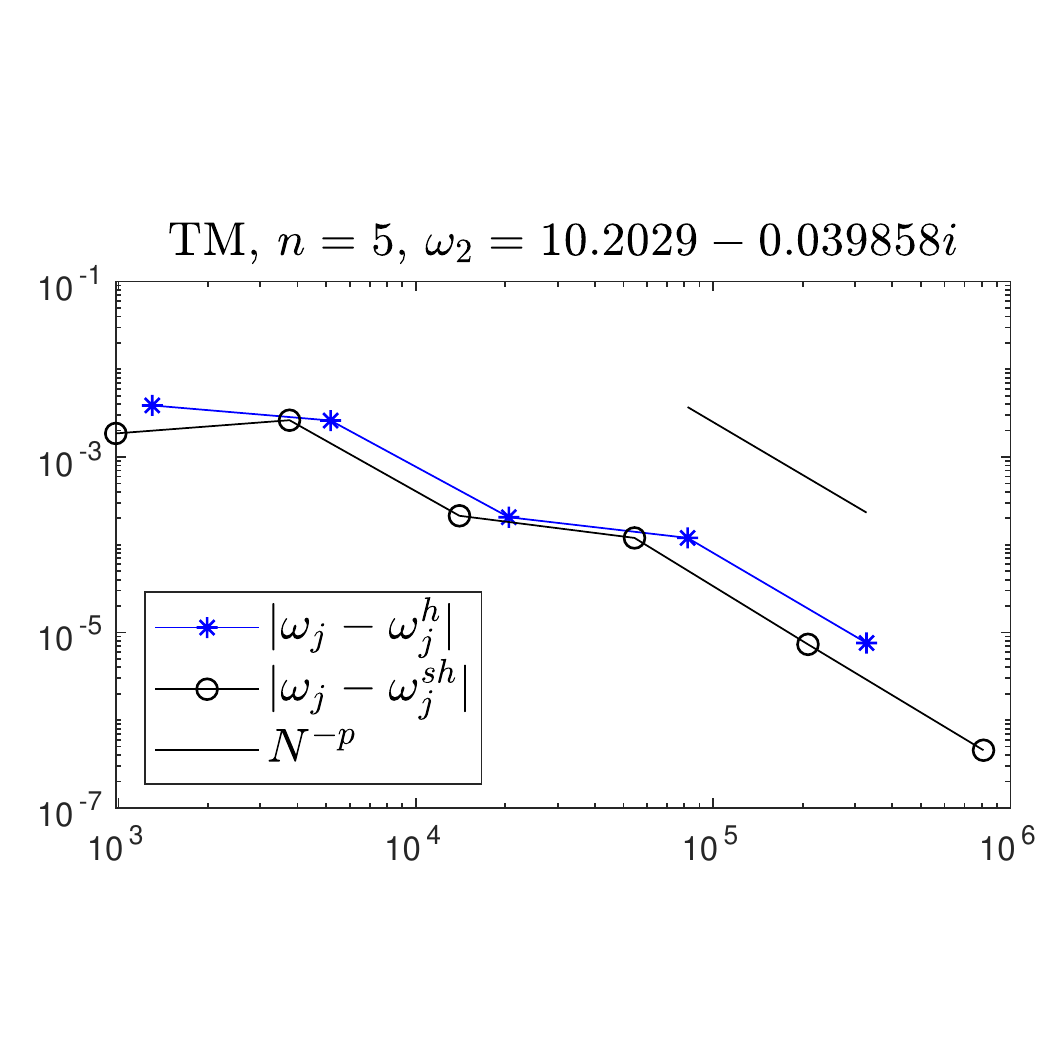}  };
		\draw( 11.00, 10.5) node {\includegraphics[scale=0.53]{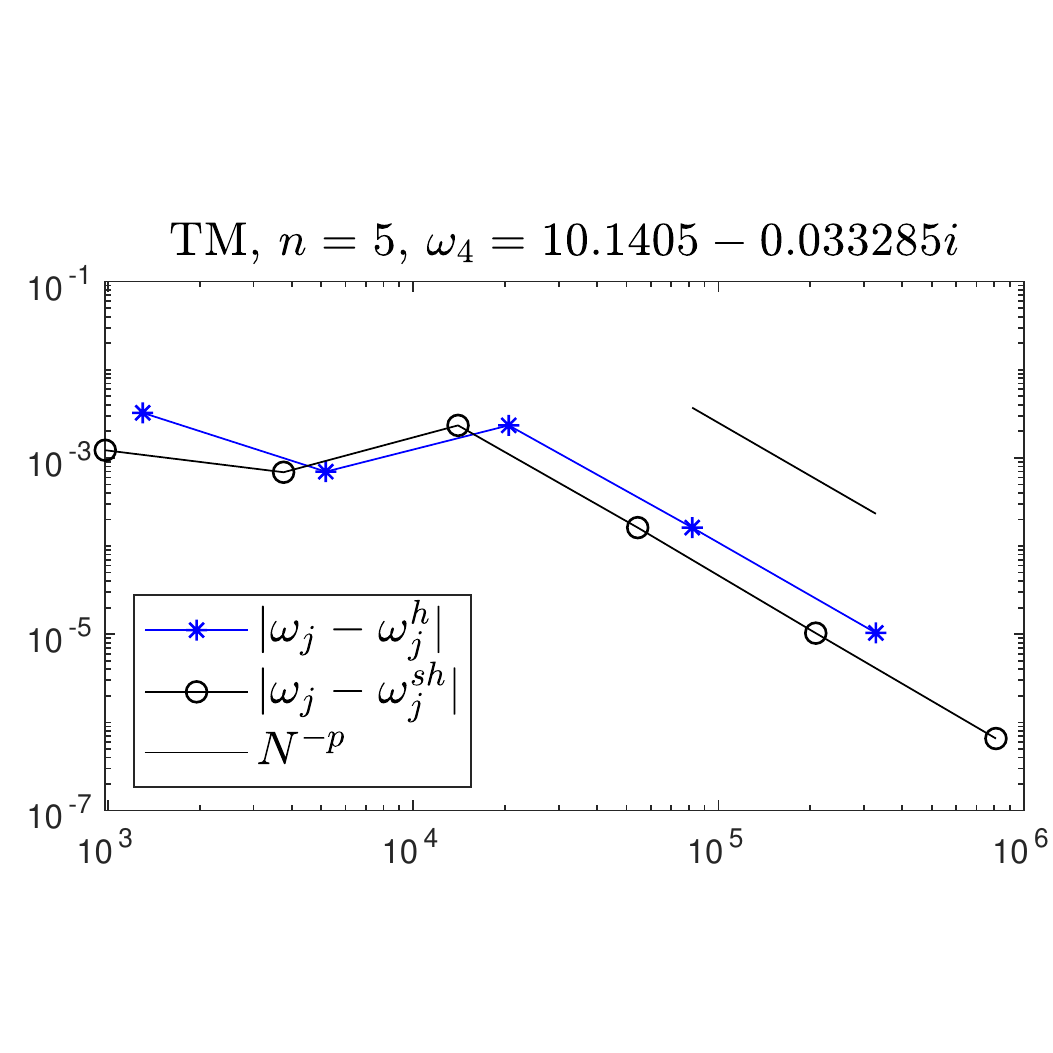}  };
		
		\draw(  0.00, 7.0) node {\includegraphics[scale=0.53]{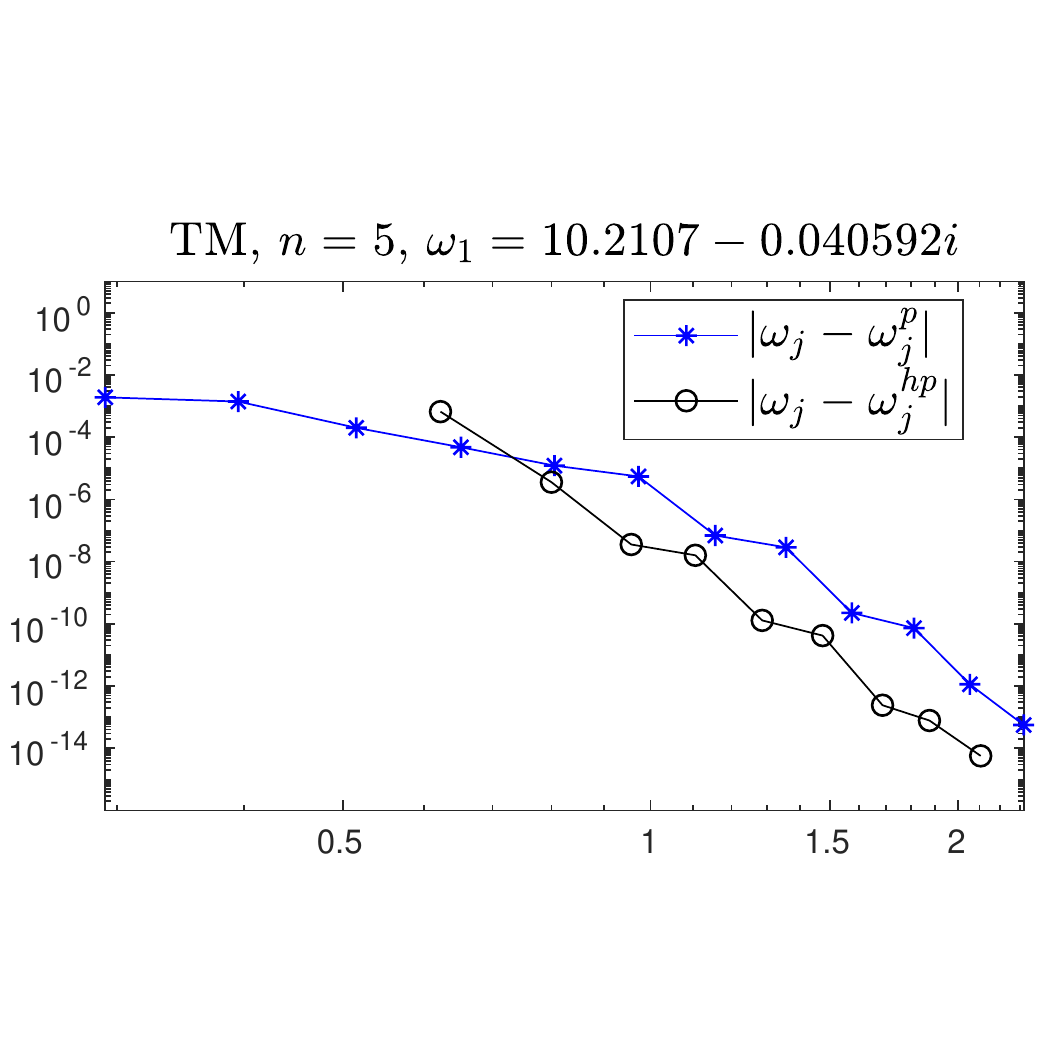}  };
		\draw(  5.50, 7.0) node {\includegraphics[scale=0.53]{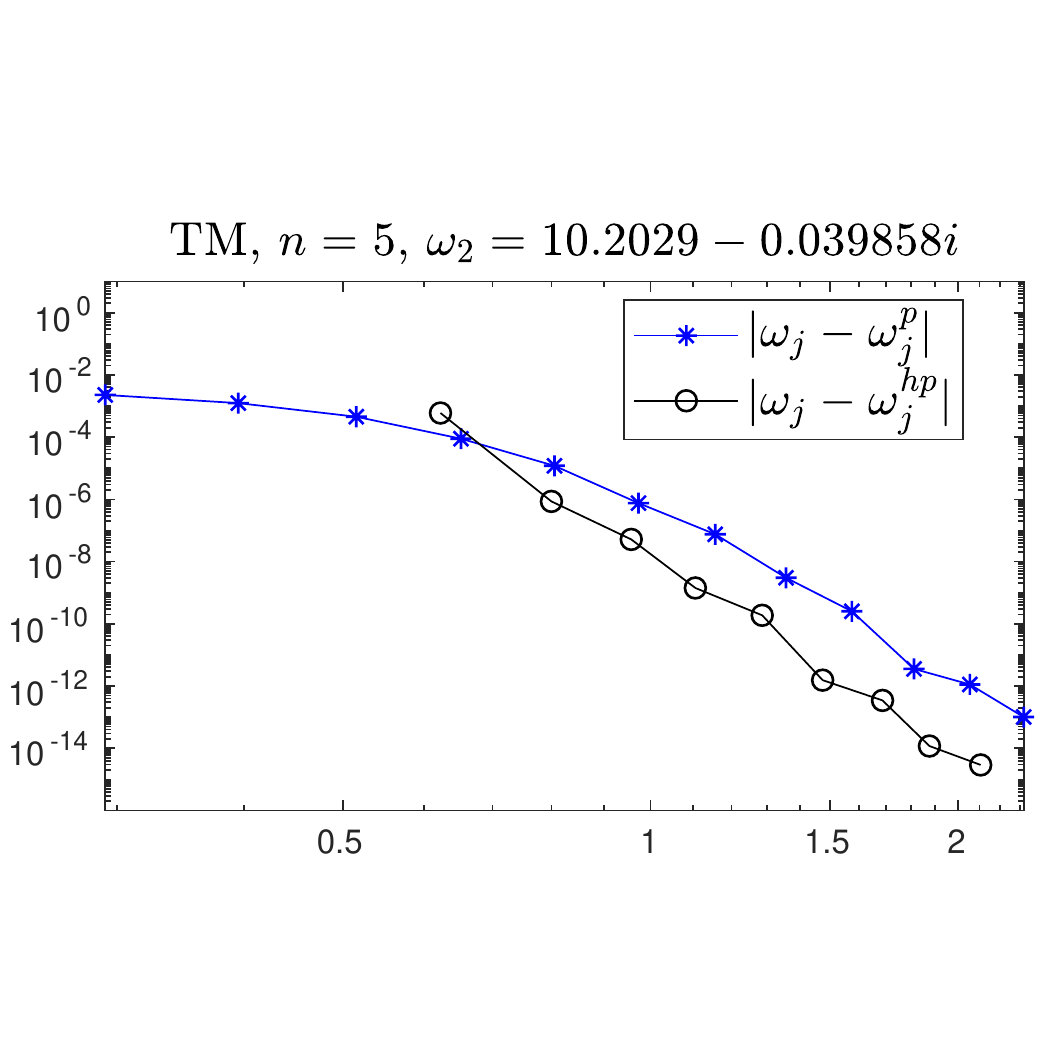}  };
		\draw( 11.00, 7.0) node {\includegraphics[scale=0.53]{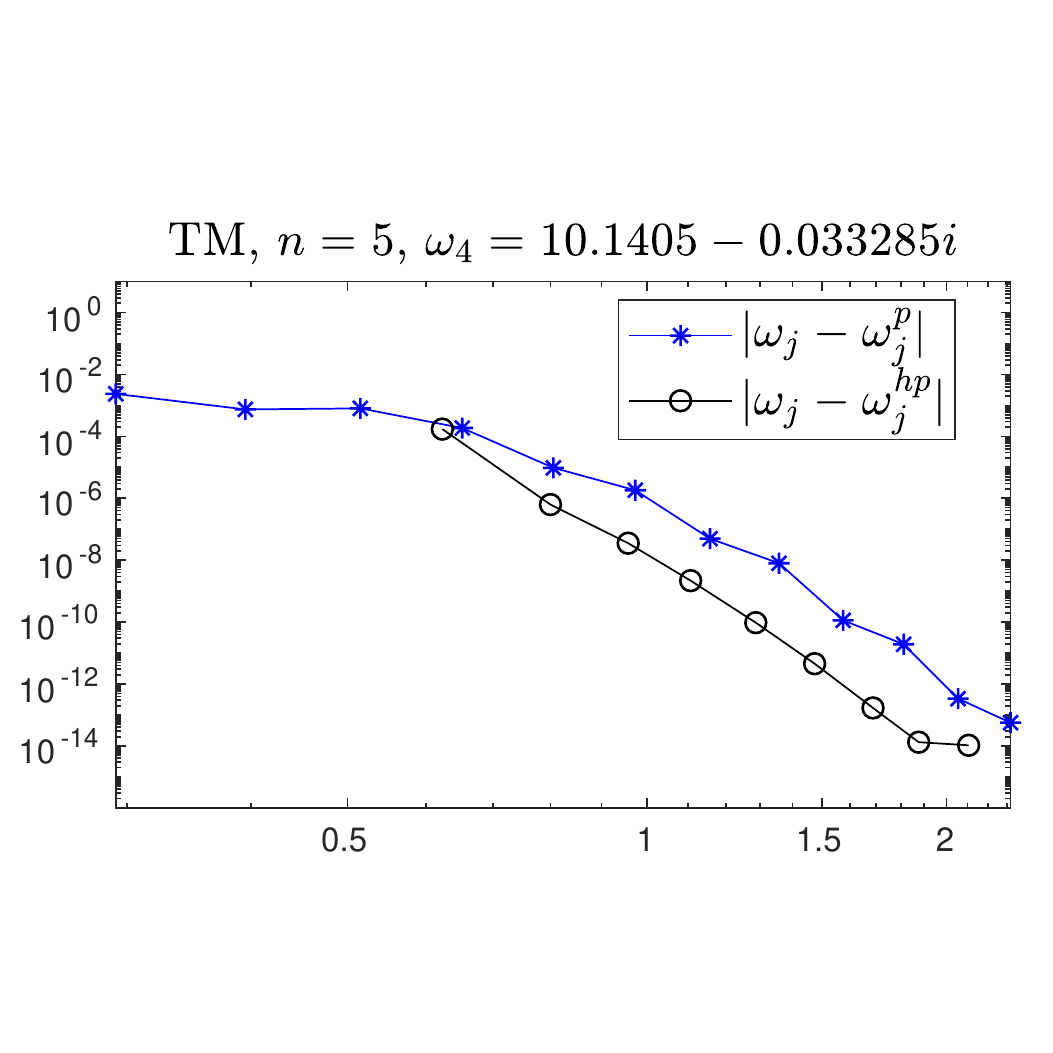}  };
		
		\draw(  0.00, 3.5) node {\includegraphics[scale=0.53]{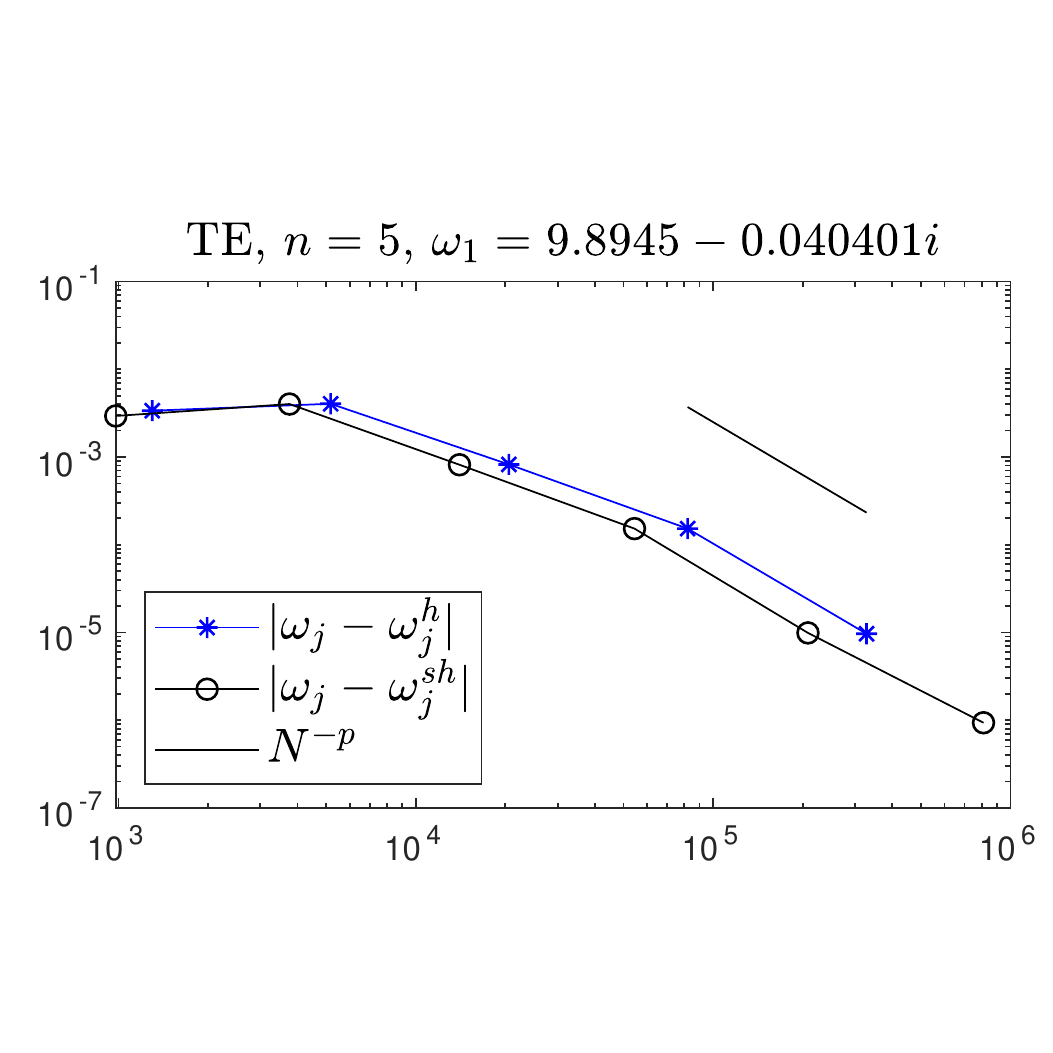}  };
		\draw(  5.50, 3.5) node {\includegraphics[scale=0.53]{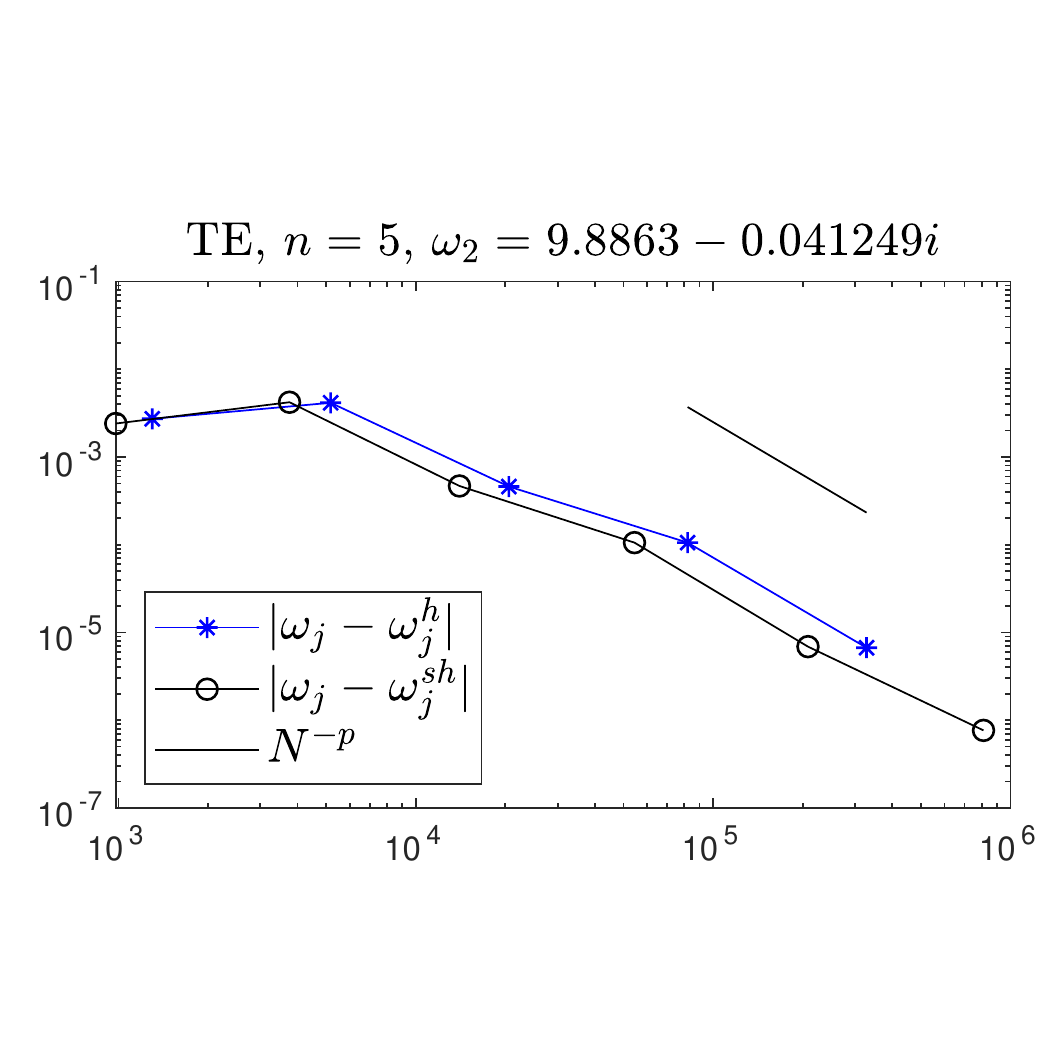}  };
		\draw( 11.00, 3.5) node {\includegraphics[scale=0.53]{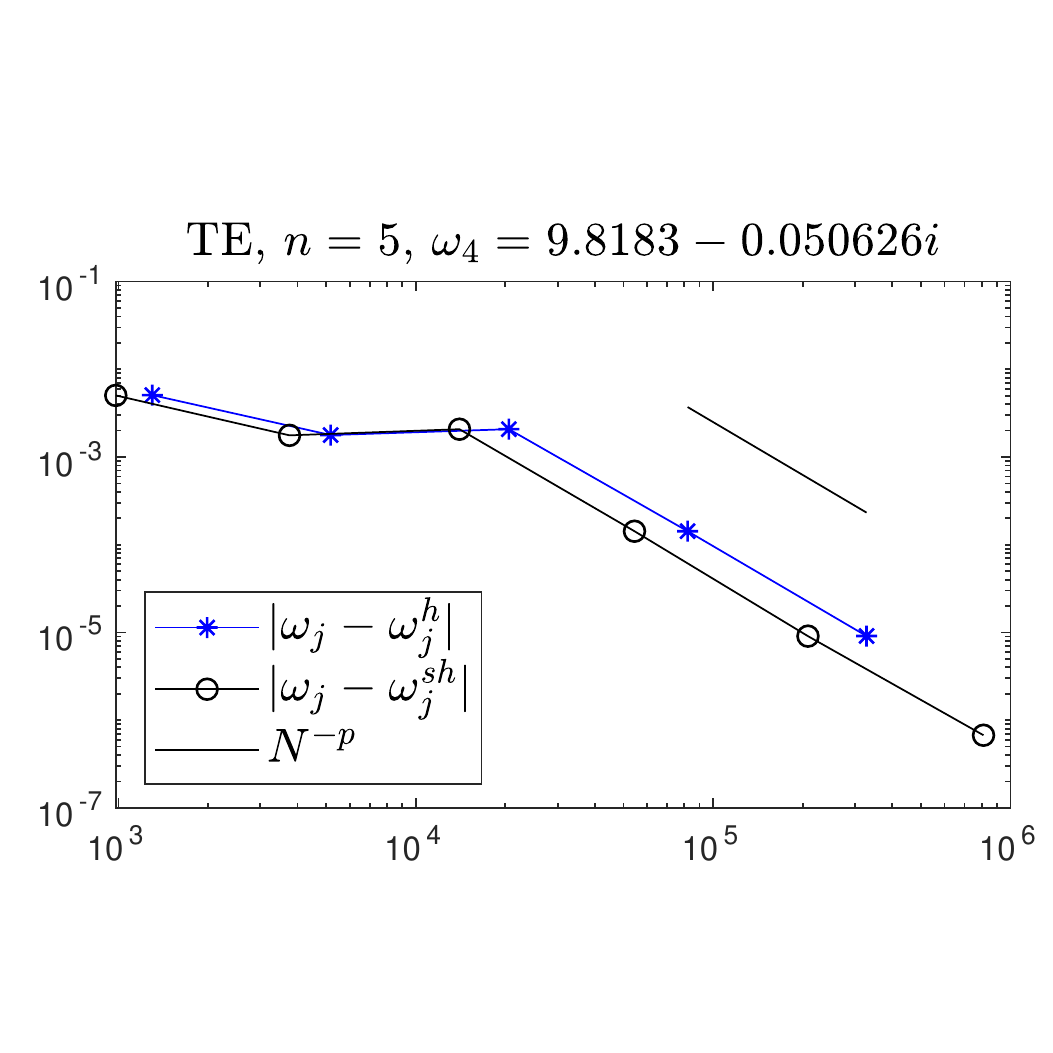}  };
		
		\draw(  0.00, 0.0) node {\includegraphics[scale=0.53]{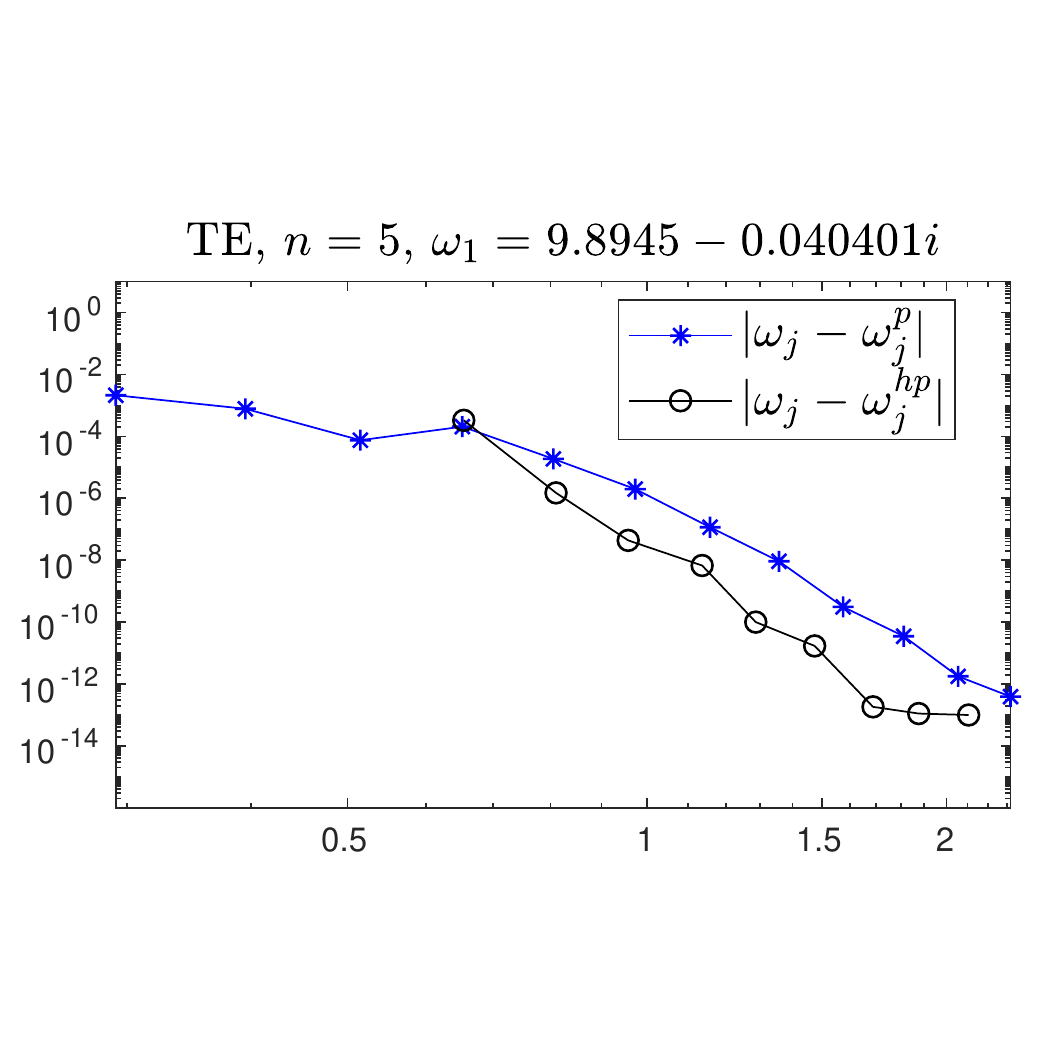}  };
		\draw(  5.50, 0.0) node {\includegraphics[scale=0.53]{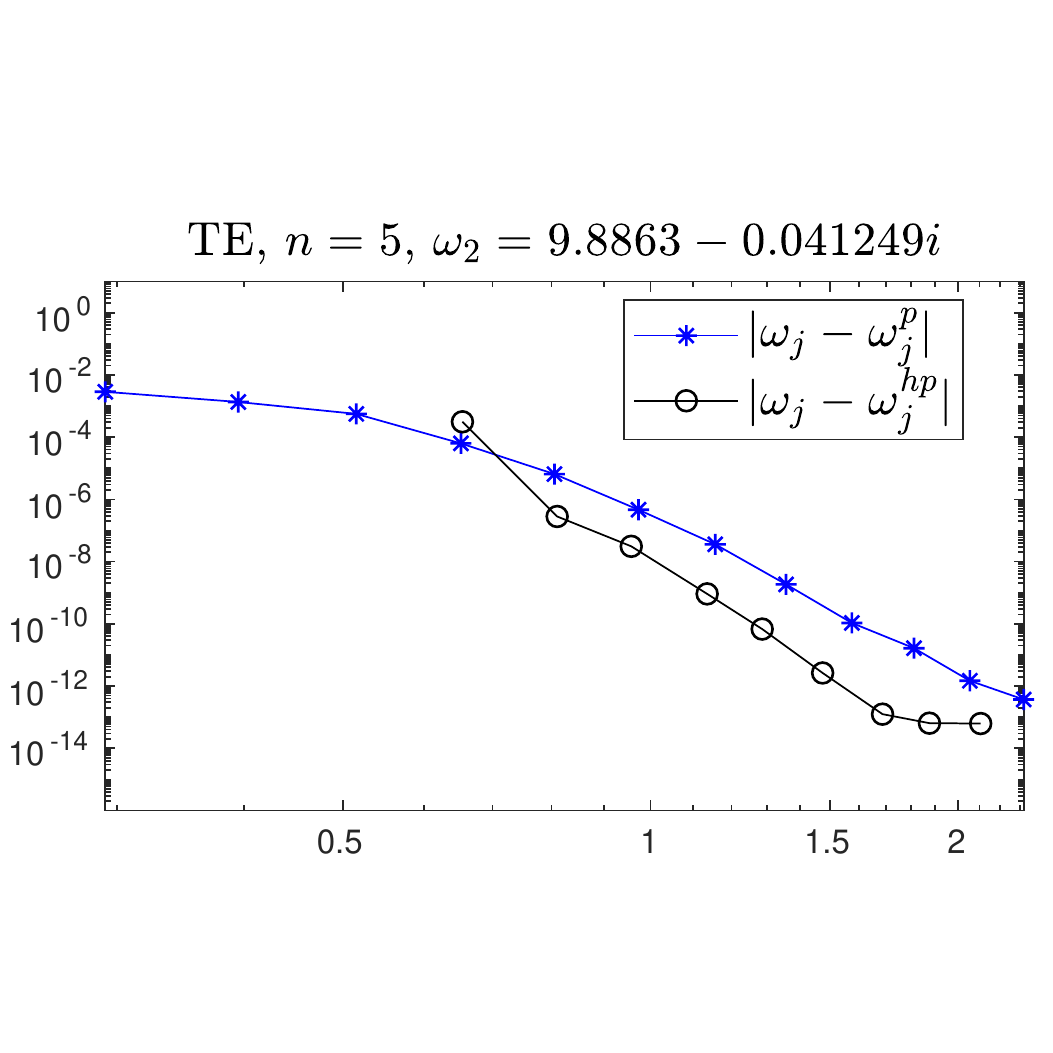}  };
		\draw( 11.00, 0.0) node {\includegraphics[scale=0.53]{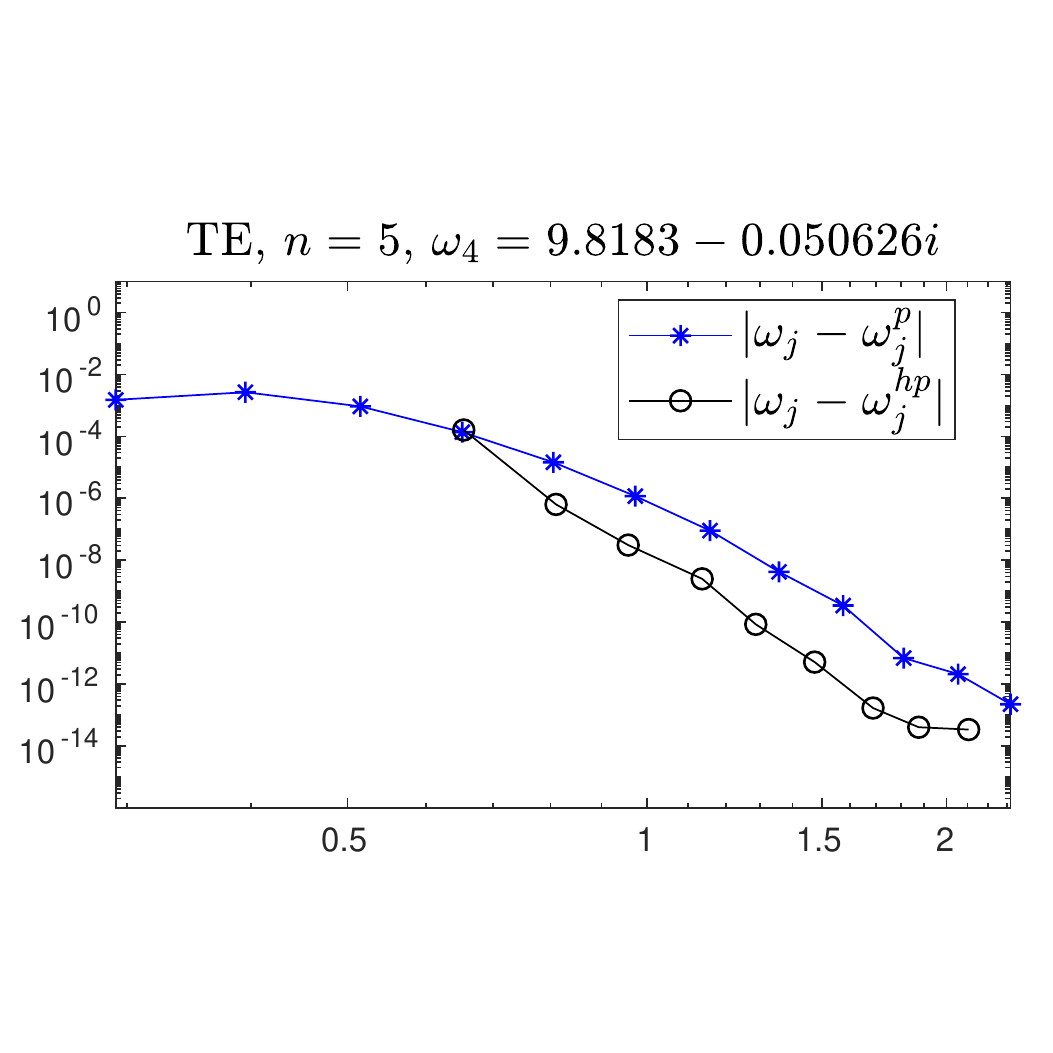}  };
		
		\node[text width=3cm] at ( -0.50, -1.52) {\tiny $10^4\times$};
		\node[text width=3cm] at (  5.10, -1.52) {\tiny $10^4\times$};
		\node[text width=3cm] at ( 10.60, -1.52) {\tiny $10^4\times$};
		
		\node[text width=3cm] at ( -0.50, 5.48) {\tiny $10^4\times$};
		\node[text width=3cm] at (  5.10, 5.48) {\tiny $10^4\times$};
		\node[text width=3cm] at ( 10.60, 5.48) {\tiny $10^4\times$};
		
	\end{tikzpicture}
	\vspace*{-10mm}
	\caption{\emph{Convergence plots (Relative errors vs. N) for TM and TE polarizations: \emph{single disk} problem \ref{sec:SD} and contrast $n_1=5$. Upper panels correspond to $h$-FE for $p=2$, and bottom panels to $p$-FE convergence. We mark with circles the a-priori strategies $sh$-FE and $hp$-FE, and with stars classical FE refinements. Each vertical strip shows different eigenpairs with $j=1,2,4$, featuring different angular numbers $m=0,2,6$.}}
	\label{fig:convergence_n5_SD}
\end{figure}

\subsubsection{Single coated disk problem}\label{sec:SDC} 

\begin{figure}[!h]
	\centering
	\begin{tikzpicture}[thick,scale=0.6, every node/.style={scale=1.0}]
		\tikzstyle{ann} = [fill=none,font=\large,inner sep=4pt]
		\draw( 13.00, 0.0) node { \includegraphics[scale=0.78]{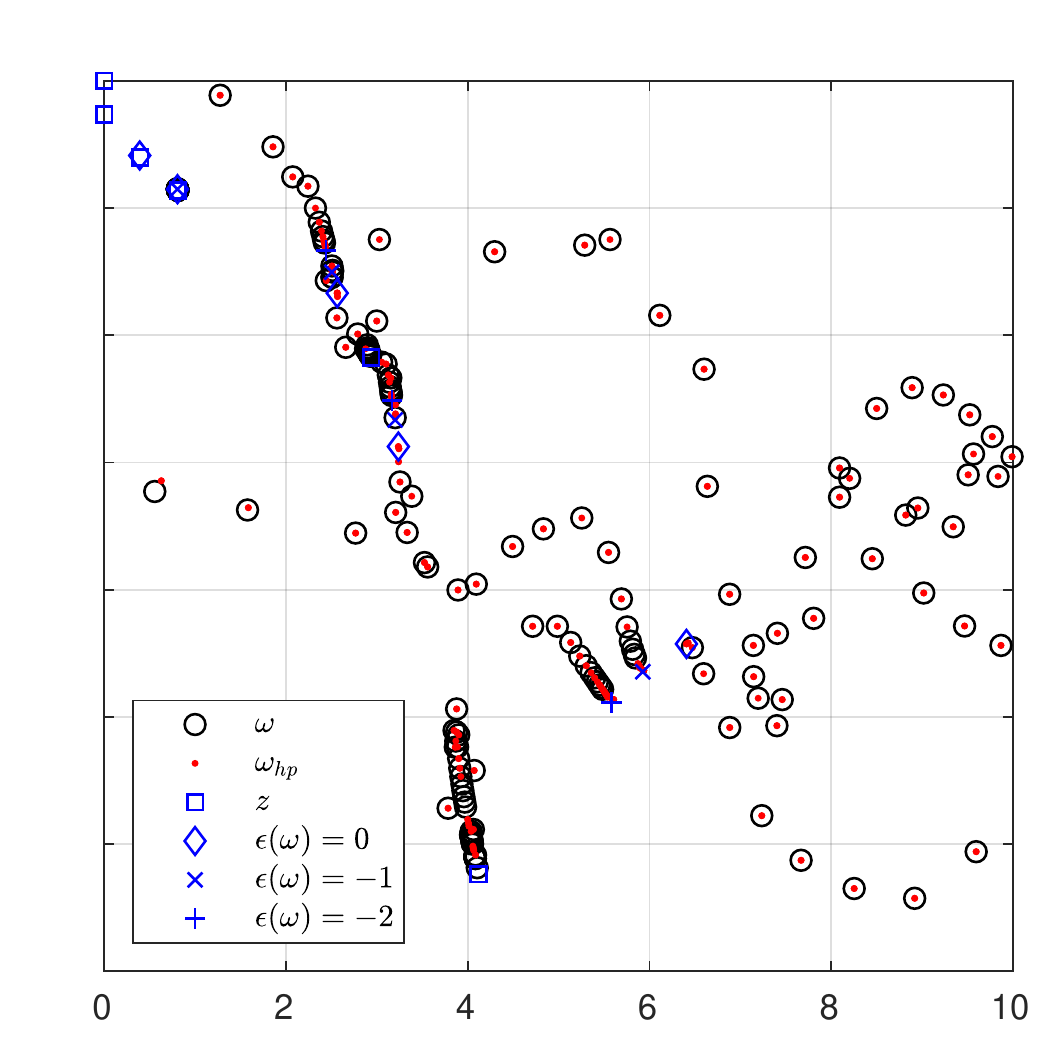} };
		\draw(  0.00, 0.0) node { \includegraphics[scale=0.78]{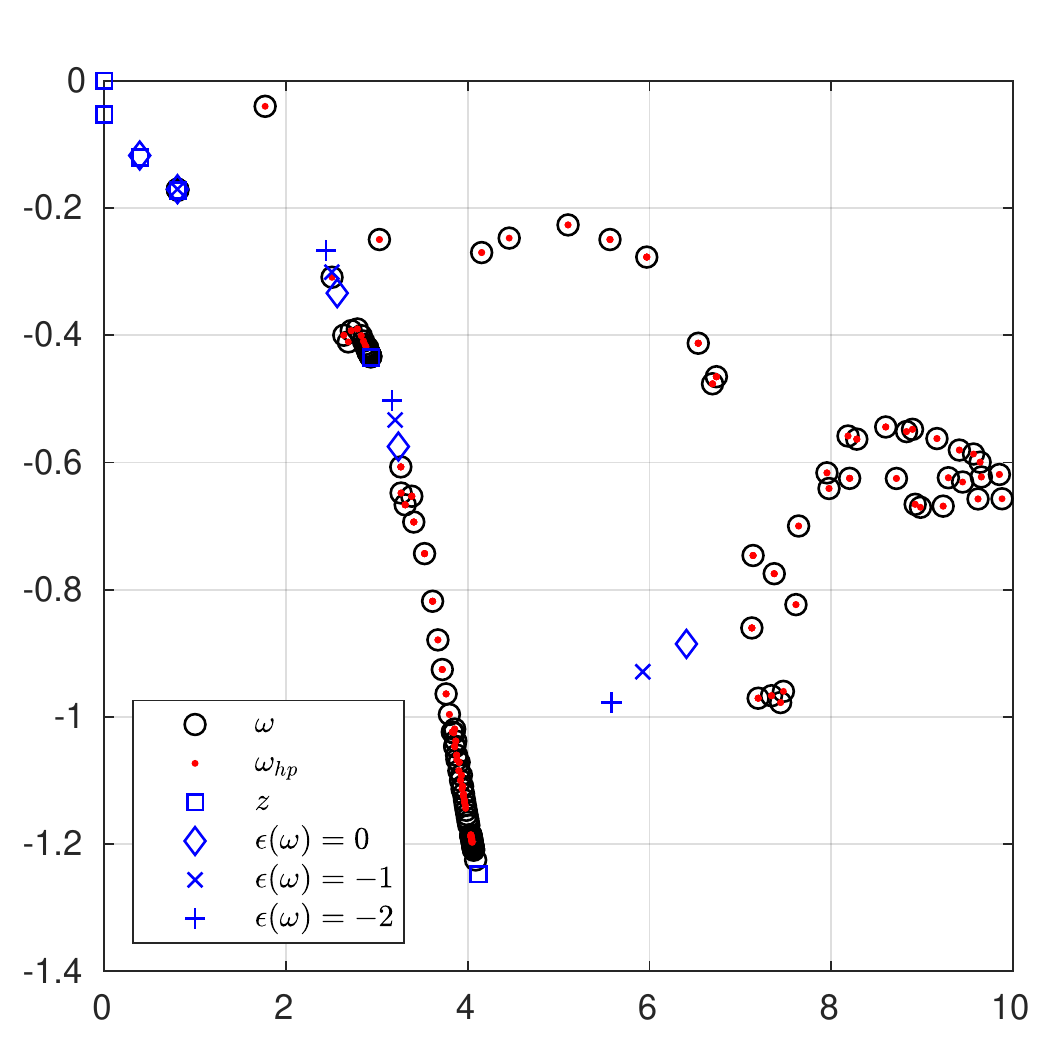} };
	\end{tikzpicture}
	\caption{\emph{Spectral window for the \emph{single coated disk} problem: polarizations TM (left) and TE (right). We mark with circles reference (Newton) eigenvalues $\om$ computed from \eqref{eq:reson_CSD}. FEM+NLEIGS eigenvalues $\om_\fem$ for a discretization $p=10,\,r=2$ are shown with dots, poles $z$ of $\eps(\om)$ with squares and its zeros with diamonds. The plasmonic branch points are marked with $\times$ and $+$.  }}
	\label{fig:eigs_SCD}
\end{figure}
In this configuration, we consider a resonator consisting of a dielectric disk with a uniform coating layer. The geometry is described by two concentric circumferences of radii $0<R_1<R_2$, with vacuum as surrounding medium. The inner disk has constant relative permittivity index, and is coated by a layer of gold. We set $n_1=\sqrt{\eps_s},$ and $n_2:=\sqrt{\eps_{metal}}$ is the value such that $\im n_2$ (absorption coefficient) is positive.  

The exact solutions satisfy \eqref{eq:master_eq}, and \eqref{eq:outgoing} with $R\geq R_2$. 
The resonance relationship reads
\begin{equation}
	\begin{array}{lcl}
	f^m_{1}(\om) &=& 	g_1 J^\prime_m(\om n_1 R_1) H^{(1)}_m(\om n_2  R_1) - 
										g_2 J_m(\om n_1 R_1) H^{(1)\prime}_m(\om n_2 R_1) , \\
	f^m_{2}(\om) &=& 	g_3 J_m(\om n_1 R_1) H^{(2)\prime}_m(\om n_2 R_1) -  
										g_4 J^\prime_m(\om n_1 R_1) H^{(2)}_m(\om n_2 R_1), \\
	f^m_{3}(\om) &=& 	g_5 H^{(1)}_m(\om n_2 R_2) H^{(1)\prime}_m(\om R_2) -   
										g_6 H^{(1)\prime}_m(\om n_2 R_2) H^{(1)}_m(\om R_2), \\
	f^m_{4}(\om) &=& 	g_7 H^{(1)}_m(\om R_2) H^{(2)\prime}_m(\om n_2 R_2) - 
										g_8 H^{(1)\prime}_m(\om R_2) H^{(2)}_m(\om n_2 R_2), \\[2mm]
	F_m(\om) &:=& (f^m_{1} f^m_{4} - f^m_{2} f^m_{3})(\om)=0,
	\end{array}
	\label{eq:reson_CSD}
\end{equation}
where for TM, $g:=(n_1,n_2,n_2,n_1,1,n_2,n_2,1)$, and for TE, $g:=(n_2,n_1,n_1,n_2,n_2,1,1,n_2)$. 
The parameters used for the computation are $R_1=0.8$, $R_2=1.0$ with scaling factor $L=1239.842\,nm$. 

A complex Newton root finder \cite{Yau98} is then used to compute very accurate approximations of the resonances. 
For each $m$ in equation \eqref{eq:reson_CSD}, we search numerically the resonances $\om_{m, 1},\om_{m, 2},\ldots$
with machine precision stopping criterion. In Table \ref{tab:CSD_reference}, we list a selection of resonances computed from \eqref{eq:reson_CSD}, which are used as a benchmark for studying 
the proposed $hp$-FE strategies \ref{sec:h_str} and \ref{sec:p_str} 
together with the proposed NEP strategy. 

\subsection{Coated disk dimer problem}\label{sec:Cdimer}
\begin{figure}
	\begin{tikzpicture}[thick,scale=1.0, every node/.style={scale=0.9}]
		\draw(  0.00, 0.0) node {\includegraphics[scale=0.92]{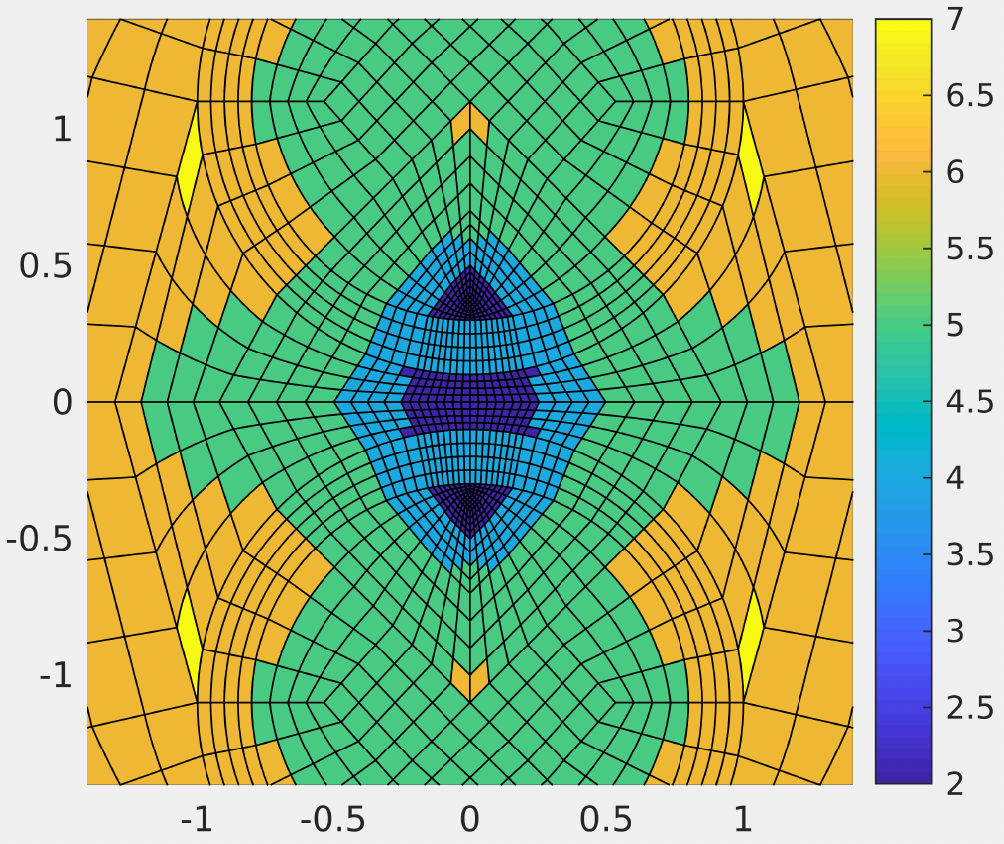} };
		\draw(  8.10, 0.0) node {\includegraphics[scale=0.92,trim=7.5mm 0mm 0mm 0mm,clip]{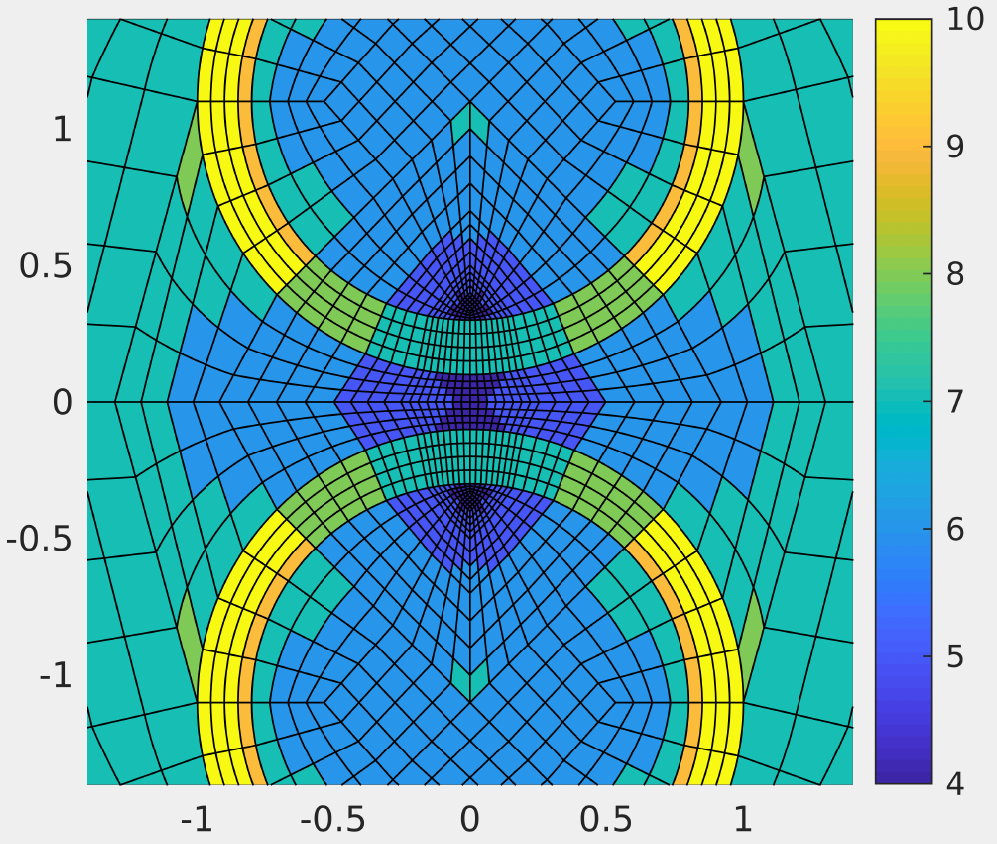} };
	\end{tikzpicture}
	\vspace*{-5mm}
	\caption{\emph{Resulting polynomial degree distribution $p_j$ from strategy in section \ref{sec:p_str}
	for shifts $\mu=4.162-0.2648i$ (left), $\mu=2.9-0.422i$ (right), and corresponding start values $p_0=7$ and $p_0=10$, respectively. In colors we give the computed $p_j$. }}
	\label{fig:grid_cdimer}
\end{figure}

The final configuration consists of two coated disks, each with equal dimension as the one presented in Sec. \ref{sec:SD}. The coated disks are surrounded by vacuum, and are separated vertically by a distance $s=0.2$.
For this problem 
we compute reference solutions by solving the problem on a very fine mesh.


\begin{table}
\robustify\bfseries
\centering
\small
\begin{tabular}{rS[table-format=2.9, detect-weight]
                 S[table-format=2.9, detect-weight]
                 S[table-format=2.9, detect-weight]
                 S[table-format=2.9, detect-weight] }
\toprule
{$j$} & {$\re \om_{T\!M}$} & {$\im \om_{T\!M}$} & {$\re \om_{T\!E}$} & {$\im \om_{T\!E}$} \\
\midrule
 1  &  0.391206696  &  -0.117682733    &  1.275203310  &  -0.017729356 \\
 2  &  0.392635042  &  -0.118062545    &  1.407446763  &  -0.351566607 \\
 3  &  0.809151314  &  -0.171363257    &  1.518834290  &  -0.459714126 \\
 4  &  1.775357827  &  -0.032801891    &  1.833732651  &  -0.071556523 \\
 5  &  2.553994710  &  -0.278675516    &  2.122066617  &  -0.294491051 \\
 6  &  2.654205934  &  -0.403812208    &  2.212801536  &  -0.162351126 \\
 7  &  2.889635797  &  -0.420955865    &  2.904693880  &  -0.427303148 \\
 8  &  2.907250975  &  -0.426266060    &  2.905243670  &  -0.423913316 \\
 9  &  3.613338577  &  -0.811314585    &  2.905244289  &  -0.423897437 \\
10  &  3.668597318  &  -0.875944785    &  3.034308619  &  -0.251533276 \\
11  &  4.152064265  &  -0.280206808    &  3.104957066  &  -0.446726774 \\
12  &  4.459080468  &  -0.244143721    &  4.087887349  &  -0.794352424 \\
13  &  5.565335954  &  -0.248309435    &  4.297453212  &  -0.269106228 \\
14  &  5.952378524  &  -0.278171454    &  4.491337149  &  -0.744646034 \\
15  &  6.175153470  &  -0.890257000    &  5.248059111  &  -0.596147639 \\
16  &  6.636767136  &  -0.423048913    &  5.560206560  &  -0.251973018 \\
17  &  6.672655322  &  -0.452937916    &  5.809753794  &  -0.884187579 \\
18  &  7.251741950  &  -0.616774531    &  6.078619602  &  -0.730690710 \\
19  &  8.155796969  &  -0.628012814    &  7.196957492  &  -0.638546672 \\
20  &  8.855927311  &  -0.444228537    &  8.961786233  &  -0.520547479 \\
\bottomrule
\end{tabular}


%



\caption{\emph{Reference eigenvalues for the \emph{coated disk dimer} problem described in Section \ref{sec:Cdimer}.}}
\label{tab:CD_reference}
\end{table}

\begin{figure}[!h]
	\centering
	\begin{tikzpicture}[thick,scale=0.6, every node/.style={scale=1.0}]
		\tikzstyle{ann} = [fill=none,font=\large,inner sep=4pt]
		
		\draw(  -0.20, 0.0) node { \includegraphics[scale=0.78]{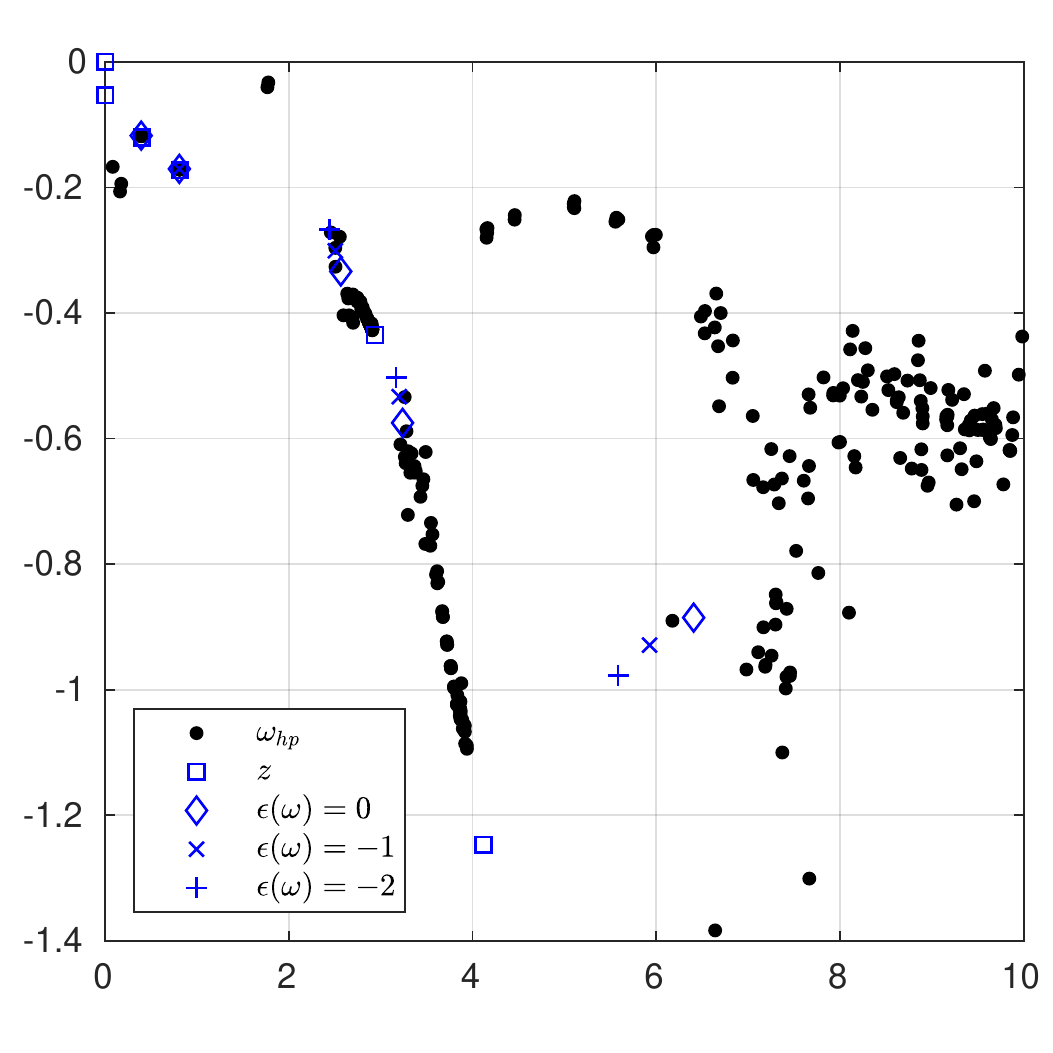} };
		
		\node[scale=0.8] at ( 8.50-1.5, 5.0+1.3 ) {$\om_{1}$}; 
		\draw( 8.50, 5.0) node { \includegraphics[scale=0.1]{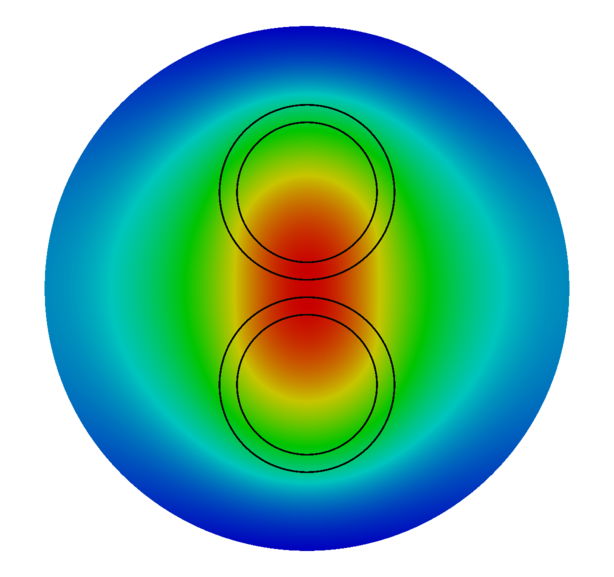} }; %
		\node[scale=0.8] at ( 8.50-1.5, 2.5+1.3 ) {$\om_{5}$};
		\draw( 8.50, 2.5) node { \includegraphics[scale=0.1]{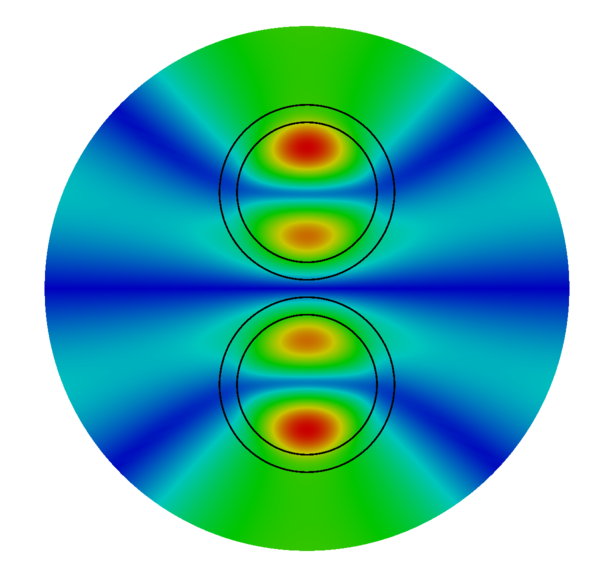} }; 
		\node[scale=0.8] at ( 8.50-1.5, 0.0+1.3 ) {$\om_{9}$};
		\draw( 8.50, 0.0) node { \includegraphics[scale=0.1]{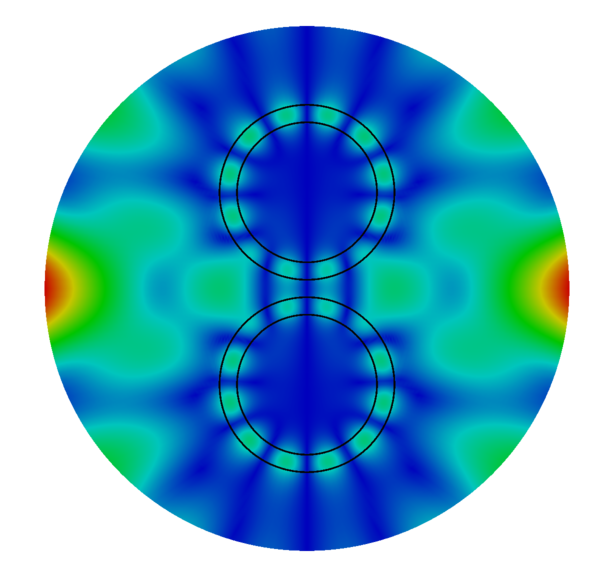} }; 
		\node[scale=0.8] at ( 8.50-1.5,-2.5+1.3 ) {$\om_{13}$};
		\draw( 8.50,-2.5) node { \includegraphics[scale=0.1]{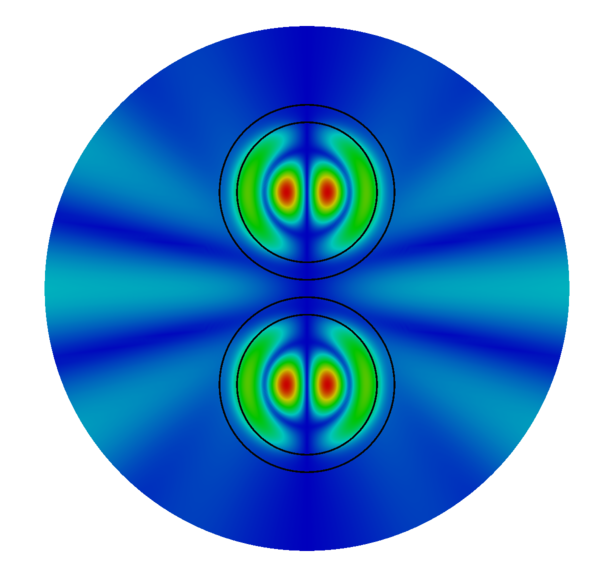} }; 
		\node[scale=0.8] at ( 8.50-1.5,-5.0+1.3 ) {$\om_{17}$};
		\draw( 8.50,-5.0) node { \includegraphics[scale=0.1]{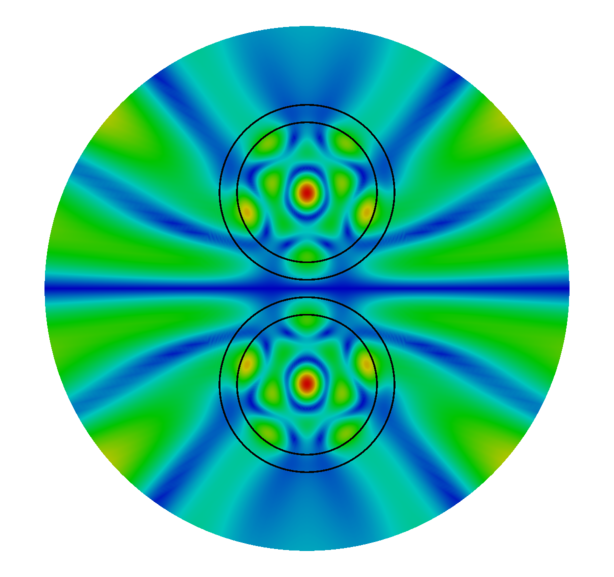} }; 
		
		\node[scale=0.8] at ( 11.80-1.5, 5.0+1.3 ) {$\om_{2}$};
		\draw( 11.80, 5.0) node { \includegraphics[scale=0.1]{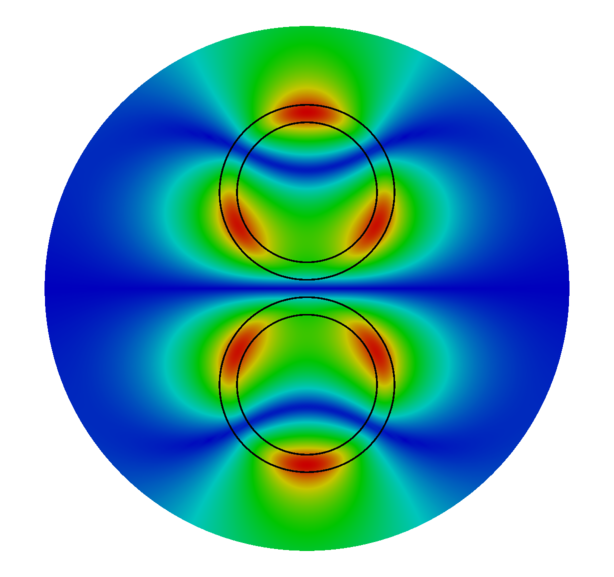} }; 
		\node[scale=0.8] at ( 11.80-1.5, 2.5+1.3 ) {$\om_{6}$};
		\draw( 11.80, 2.5) node { \includegraphics[scale=0.1]{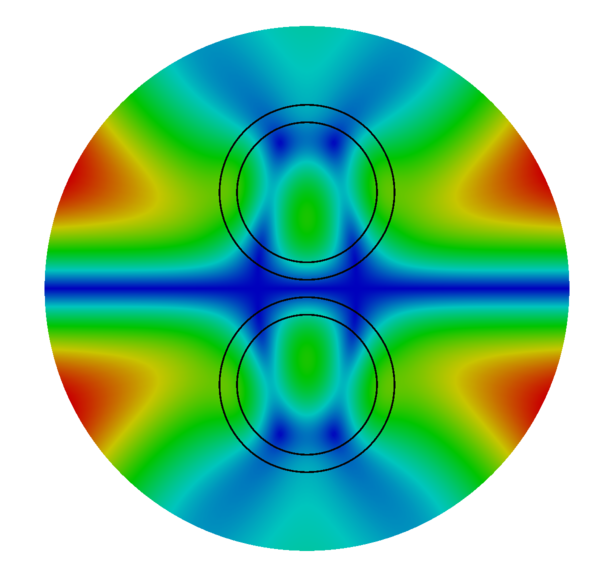} }; 
		\node[scale=0.8] at ( 11.80-1.5, 0.0+1.3 ) {$\om_{10}$};
		\draw( 11.80, 0.0) node { \includegraphics[scale=0.1]{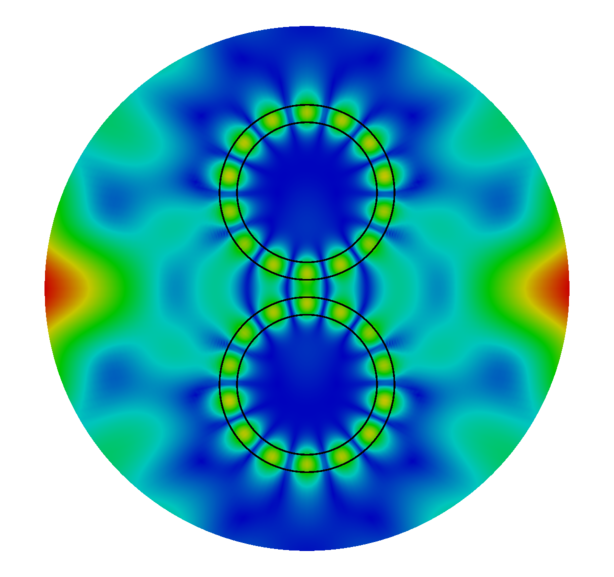} }; 
		\node[scale=0.8] at ( 11.80-1.5,-2.5+1.3 ) {$\om_{14}$};
		\draw( 11.80,-2.5) node { \includegraphics[scale=0.1]{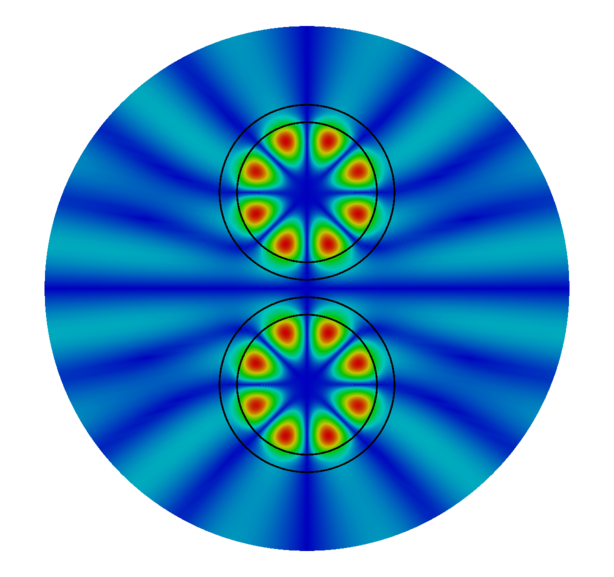} }; 
		\node[scale=0.8] at ( 11.80-1.5,-5.0+1.3 ) {$\om_{18}$};
		\draw( 11.80,-5.0) node { \includegraphics[scale=0.1]{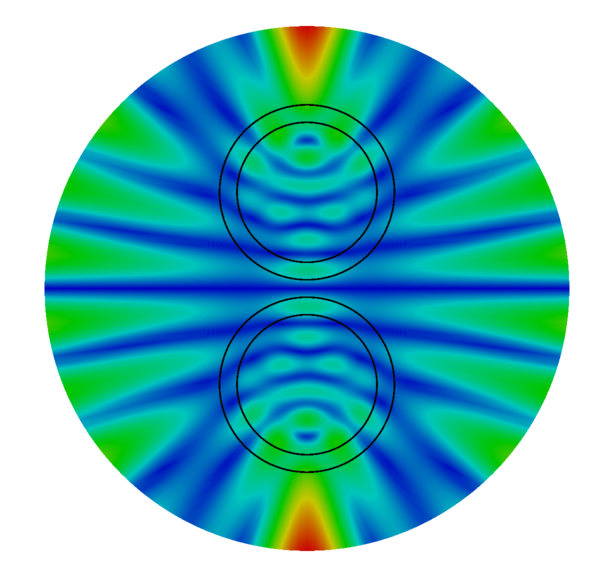} }; 
		
		\node[scale=0.8] at ( 15.10-1.5, 5.0+1.3 ) {$\om_{3}$};
		\draw( 15.10, 5.0) node { \includegraphics[scale=0.1]{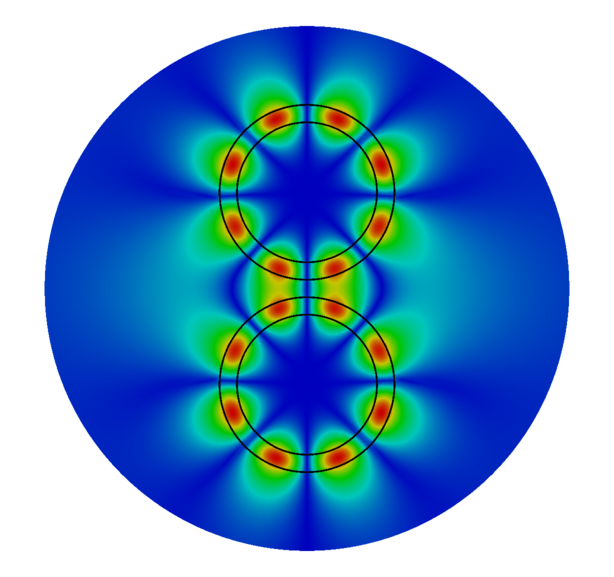} }; 
		\node[scale=0.8] at ( 15.10-1.5, 2.5+1.3 ) {$\om_{7}$};
		\draw( 15.10, 2.5) node { \includegraphics[scale=0.1]{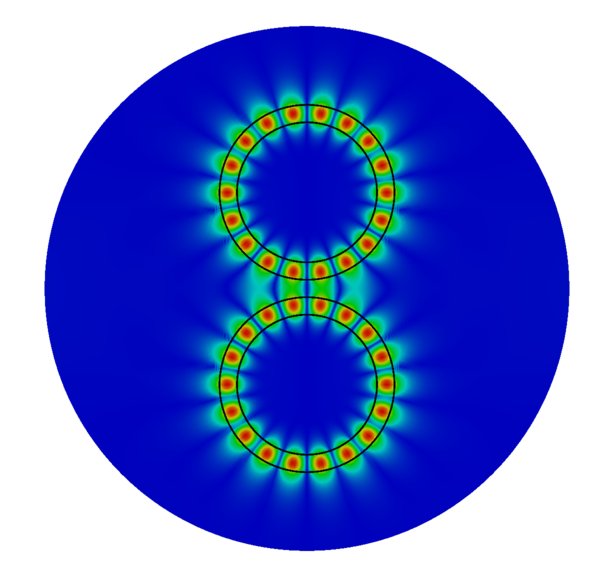} }; %
		\node[scale=0.8] at ( 15.10-1.5, 0.0+1.3 ) {$\om_{11}$};
		\draw( 15.10, 0.0) node { \includegraphics[scale=0.1]{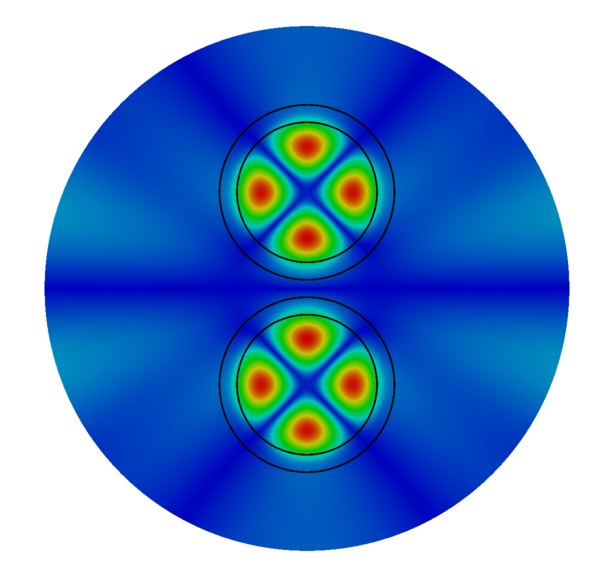} }; 
		\node[scale=0.8] at ( 15.10-1.5,-2.5+1.3 ) {$\om_{15}$};
		\draw( 15.10,-2.5) node { \includegraphics[scale=0.1]{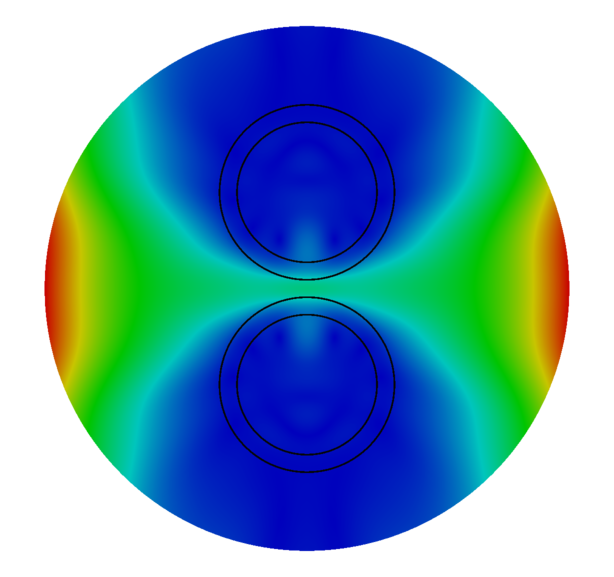} }; %
		\node[scale=0.8] at ( 15.10-1.5,-5.0+1.3 ) {$\om_{19}$};
		\draw( 15.10,-5.0) node { \includegraphics[scale=0.1]{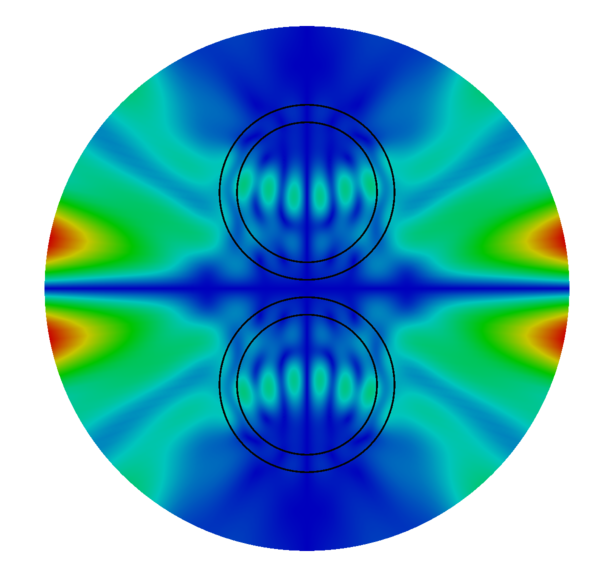} }; 
		
		\node[scale=0.8] at ( 18.40-1.5, 5.0+1.3 ) {$\om_{4}$};
		\draw( 18.40, 5.0) node { \includegraphics[scale=0.1]{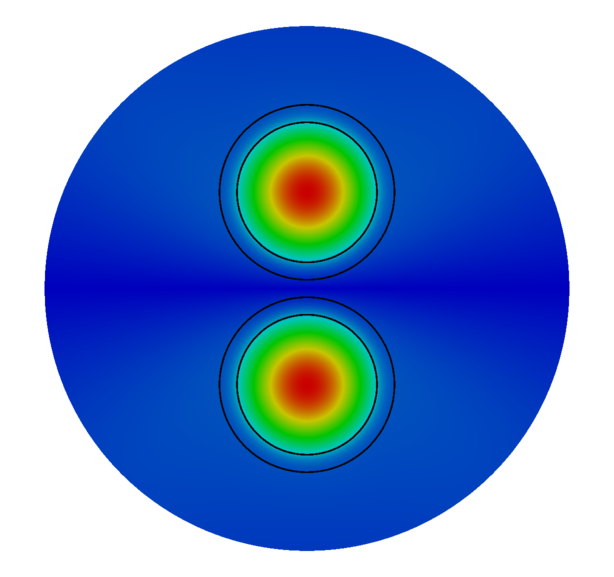} }; 
		\node[scale=0.8] at ( 18.40-1.5, 2.5+1.3 ) {$\om_{8}$};
		\draw( 18.40, 2.5) node { \includegraphics[scale=0.1]{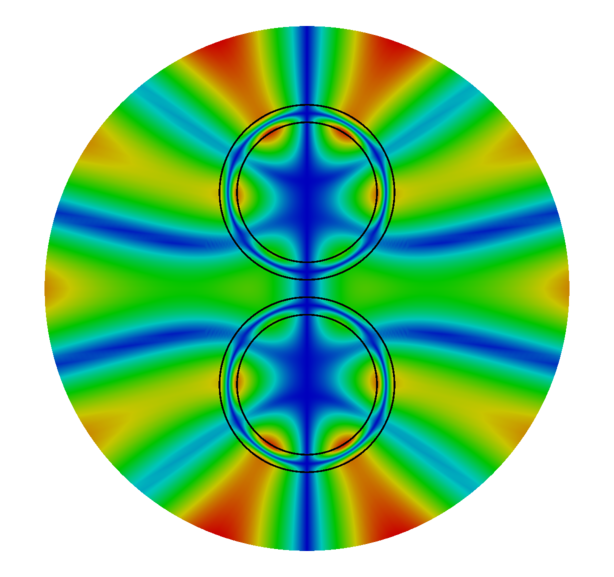} }; 
		\node[scale=0.8] at ( 18.40-1.5, 0.0+1.3 ) {$\om_{12}$};
		\draw( 18.40, 0.0) node { \includegraphics[scale=0.1]{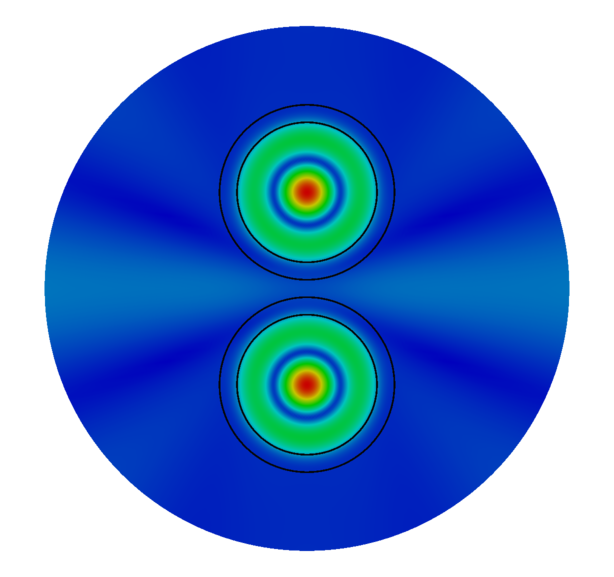} }; 
		\node[scale=0.8] at ( 18.40-1.5,-2.5+1.3 ) {$\om_{16}$};
		\draw( 18.40,-2.5) node { \includegraphics[scale=0.1]{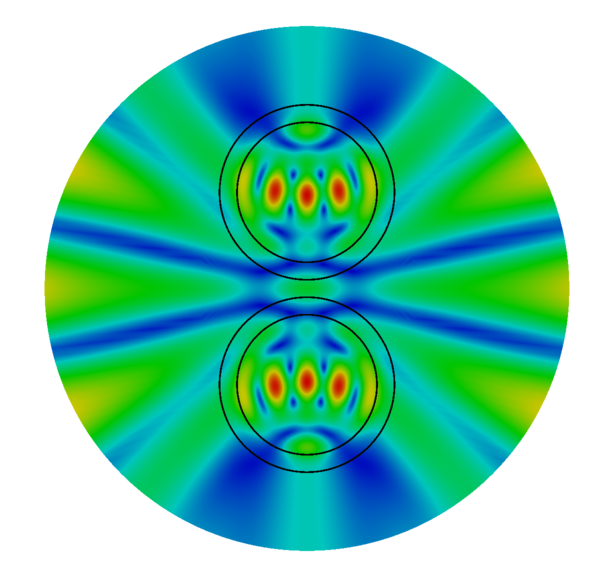} }; 
		\node[scale=0.8] at ( 18.40-1.5,-5.0+1.3 ) {$\om_{20}$};
		\draw( 18.40,-5.0) node { \includegraphics[scale=0.1]{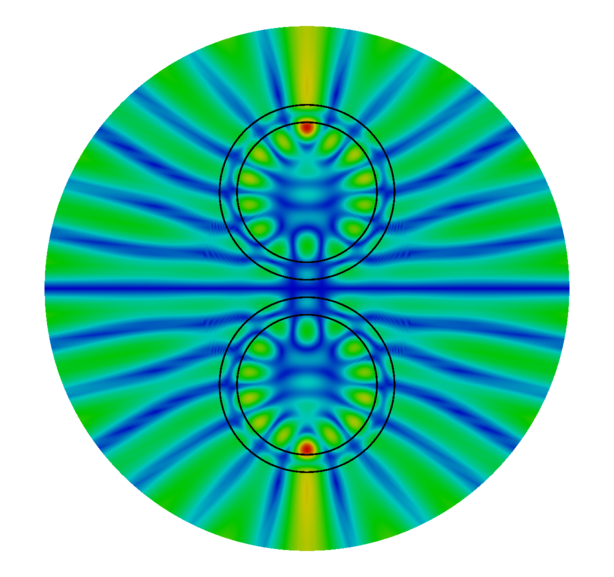} }; 
		
	\end{tikzpicture}
	\vspace*{-4mm}
	\caption{\emph{Left) Spectral window for the \emph{coated dimer} problem \ref{sec:Cdimer} with TM polarization. FEM+NLEIGS eigenvalues $\om_\fem$ are shown with dots, poles $z$ of $\eps(\om)$ with squares and its zeros with diamonds. The plasmonic branch points are marked with $\times$ and $+$. Right) In colors we plot $\|E_j\|$ from the resonant mode corresponding to $\om_j$ listed in table \ref{tab:CD_reference}. }}
	\label{fig:eigs_CD_TM}
\end{figure}


\begin{figure}[!h]
	\centering
	\begin{tikzpicture}[thick,scale=0.6, every node/.style={scale=1.0}]
		\tikzstyle{ann} = [fill=none,font=\large,inner sep=4pt]
		
		\draw( -0.20, 0.0) node { \includegraphics[scale=0.78]{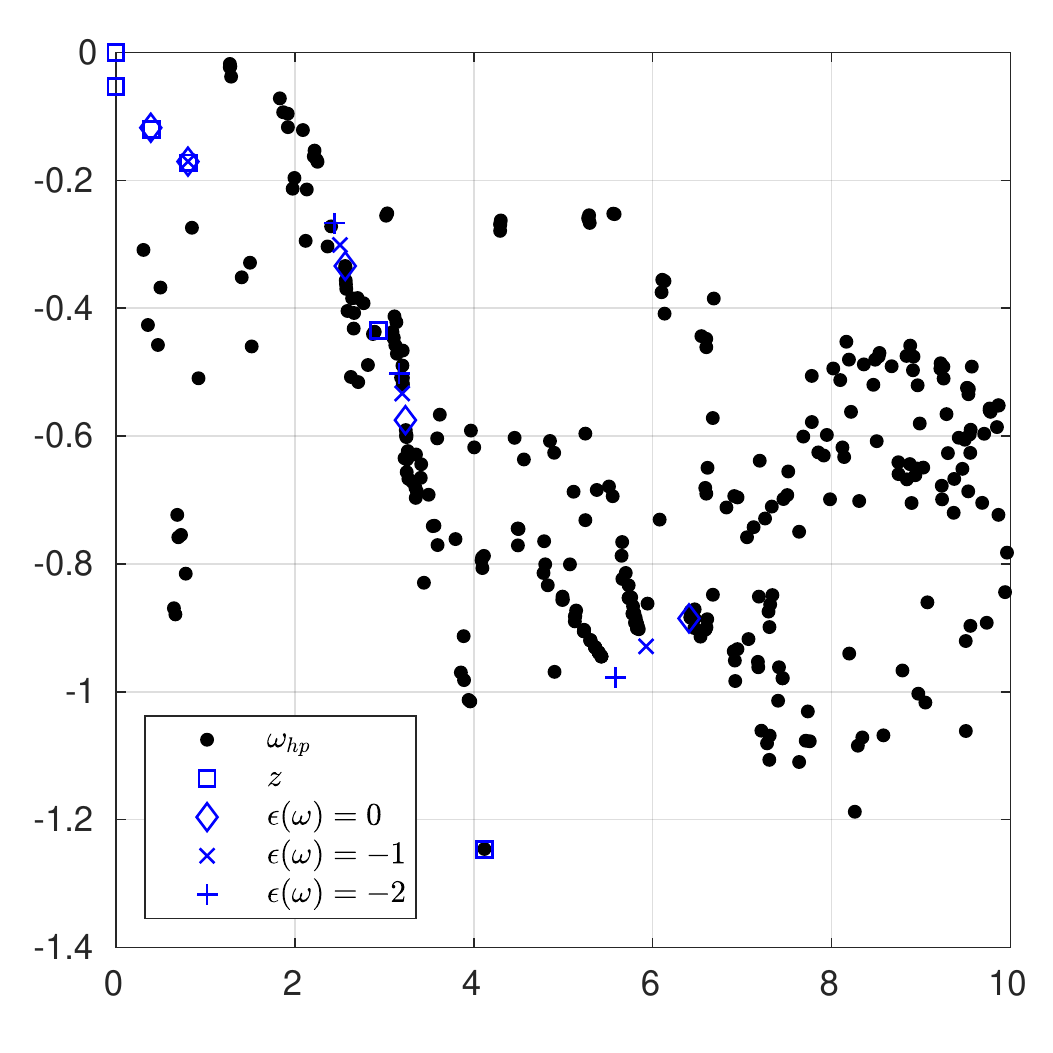} };
		
		\node[scale=0.8] at ( 8.50-1.5, 5.0+1.3 ) {$\om_{1}$}; 
		\draw( 8.50, 5.0) node { \includegraphics[scale=0.1]{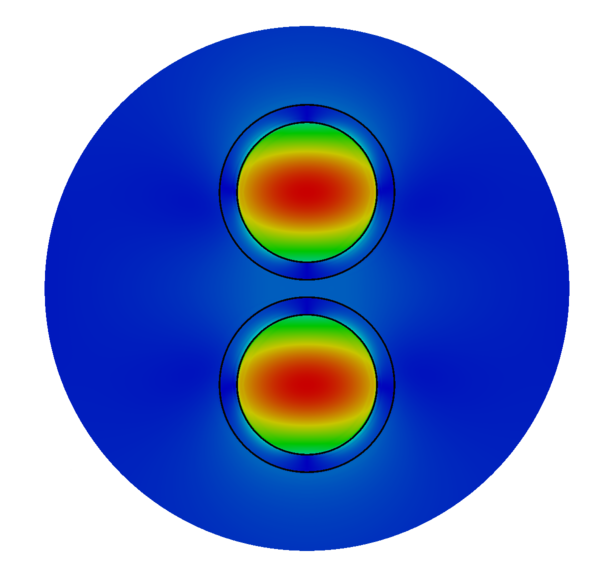} };
		\node[scale=0.8] at ( 8.50-1.5, 2.5+1.3 ) {$\om_{5}$};
		\draw( 8.50, 2.5) node { \includegraphics[scale=0.1]{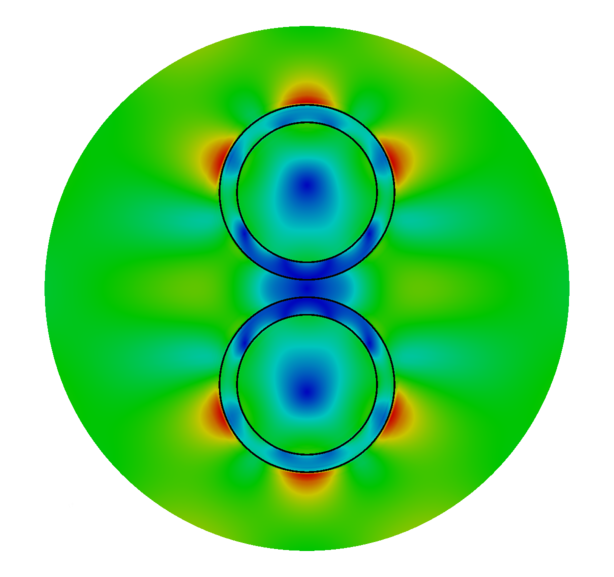} };
		\node[scale=0.8] at ( 8.50-1.5, 0.0+1.3 ) {$\om_{9}$};
		\draw( 8.50, 0.0) node { \includegraphics[scale=0.1]{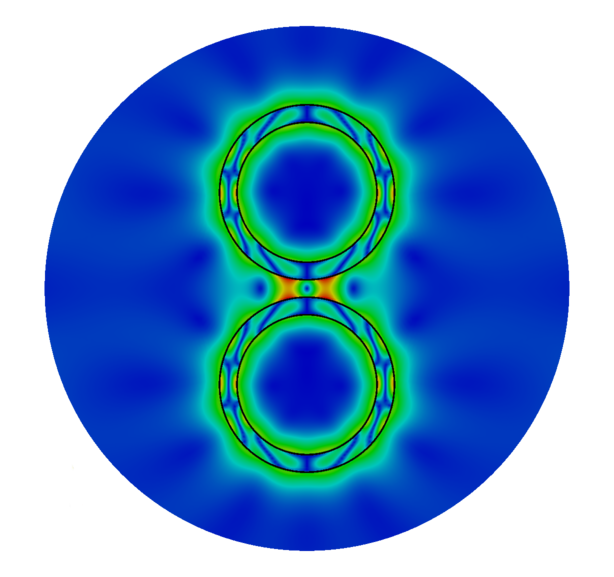} };
		\node[scale=0.8] at ( 8.50-1.5,-2.5+1.3 ) {$\om_{13}$};
		\draw( 8.50,-2.5) node { \includegraphics[scale=0.1]{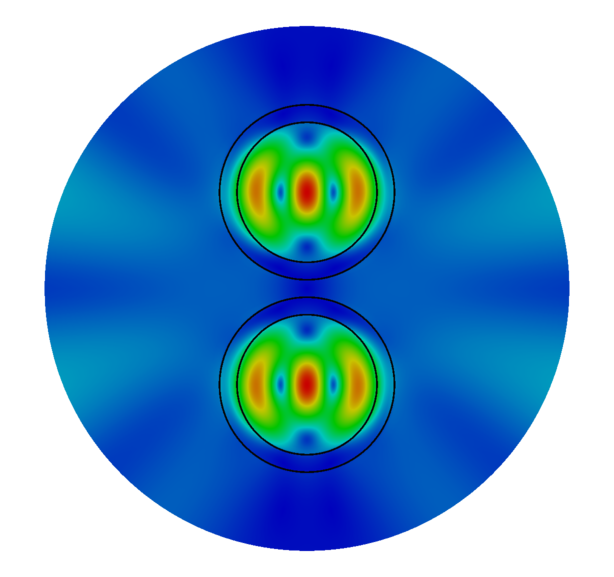} };
		\node[scale=0.8] at ( 8.50-1.5,-5.0+1.3 ) {$\om_{17}$};
		\draw( 8.50,-5.0) node { \includegraphics[scale=0.1]{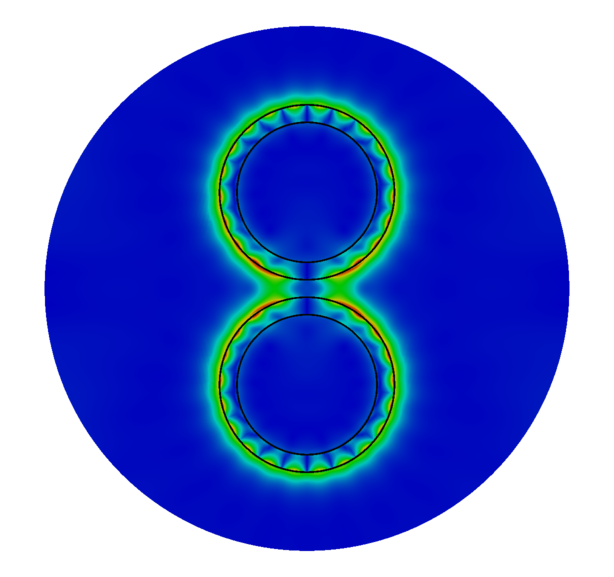} };
		
		\node[scale=0.8] at ( 11.80-1.5, 5.0+1.3 ) {$\om_{2}$};
		\draw( 11.80, 5.0) node { \includegraphics[scale=0.1]{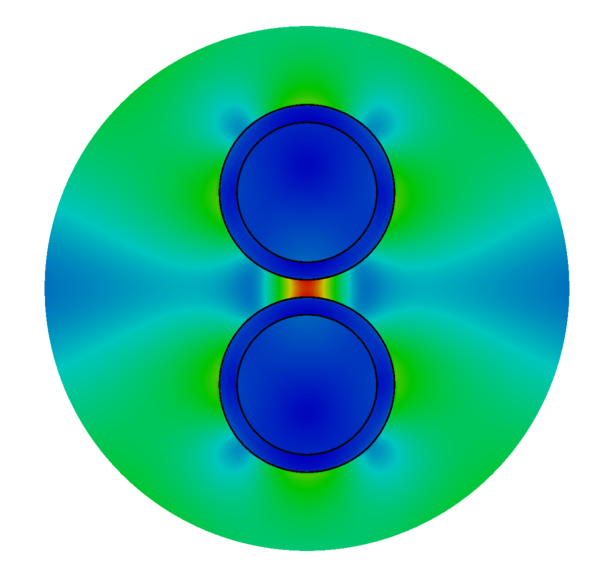} };
		\node[scale=0.8] at ( 11.80-1.5, 2.5+1.3 ) {$\om_{6}$};
		\draw( 11.80, 2.5) node { \includegraphics[scale=0.1]{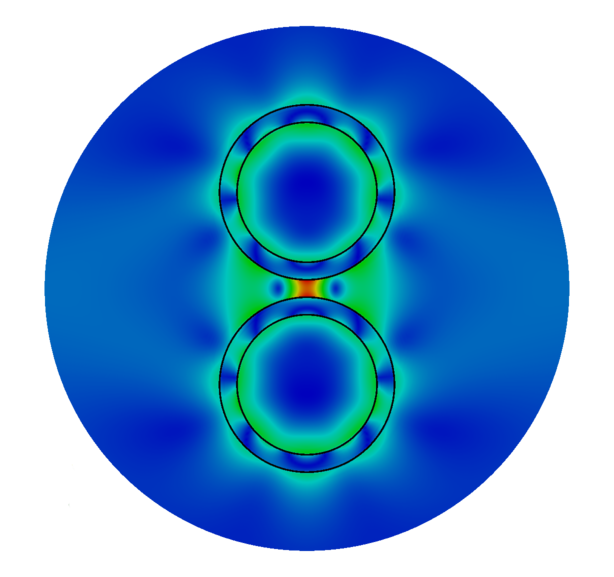} };
		\node[scale=0.8] at ( 11.80-1.5, 0.0+1.3 ) {$\om_{10}$};
		\draw( 11.80, 0.0) node { \includegraphics[scale=0.1]{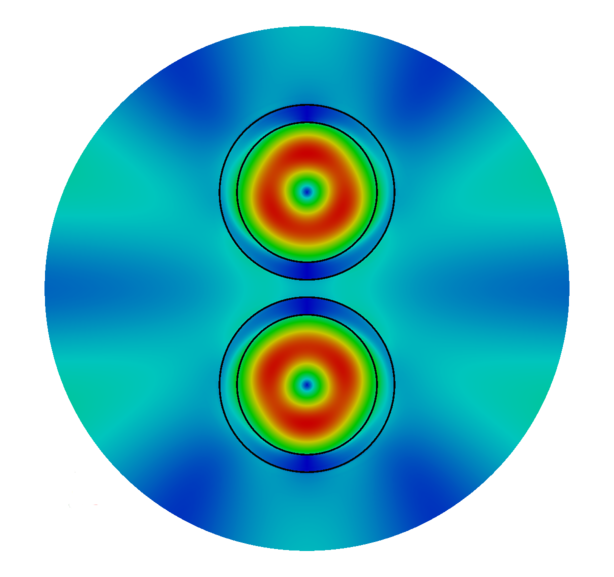} };
		\node[scale=0.8] at ( 11.80-1.5,-2.5+1.3 ) {$\om_{14}$};
		\draw( 11.80,-2.5) node { \includegraphics[scale=0.1]{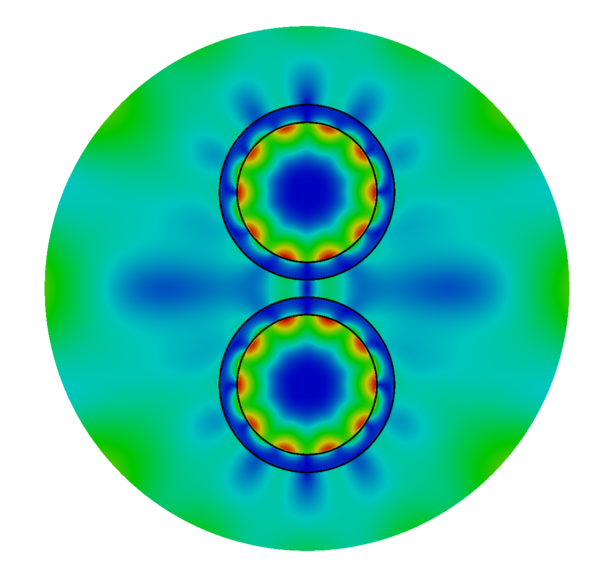} };
		\node[scale=0.8] at ( 11.80-1.5,-5.0+1.3 ) {$\om_{18}$};
		\draw( 11.80,-5.0) node { \includegraphics[scale=0.1]{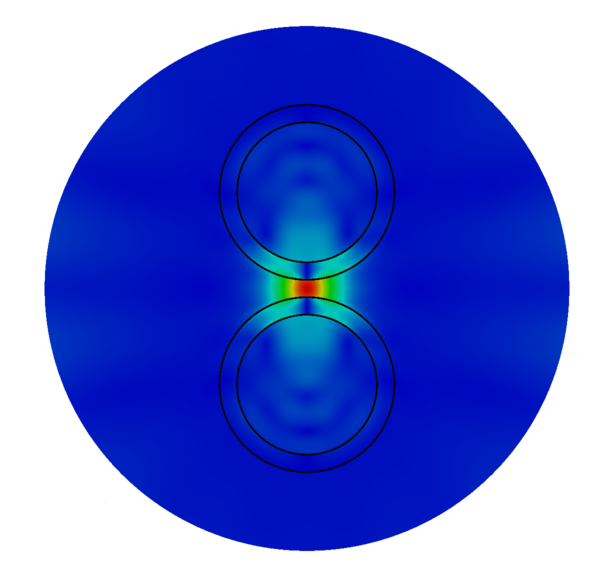} };
		
		\node[scale=0.8] at ( 15.10-1.5, 5.0+1.3 ) {$\om_{3}$};
		\draw( 15.10, 5.0) node { \includegraphics[scale=0.1]{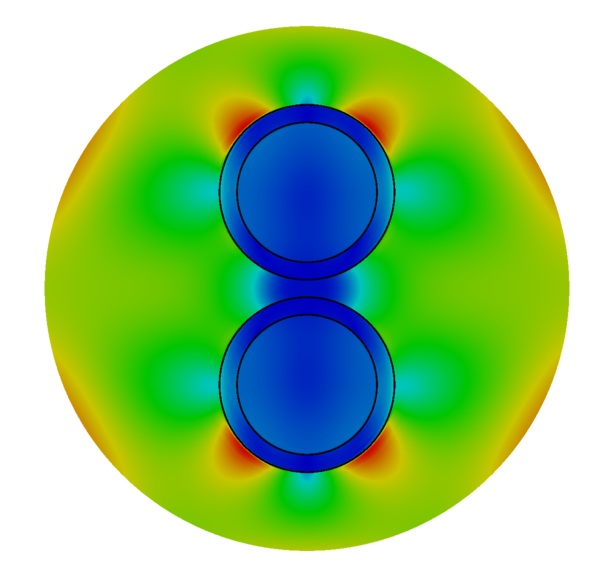} };
		\node[scale=0.8] at ( 15.10-1.5, 2.5+1.3 ) {$\om_{7}$};
		\draw( 15.10, 2.5) node { \includegraphics[scale=0.1]{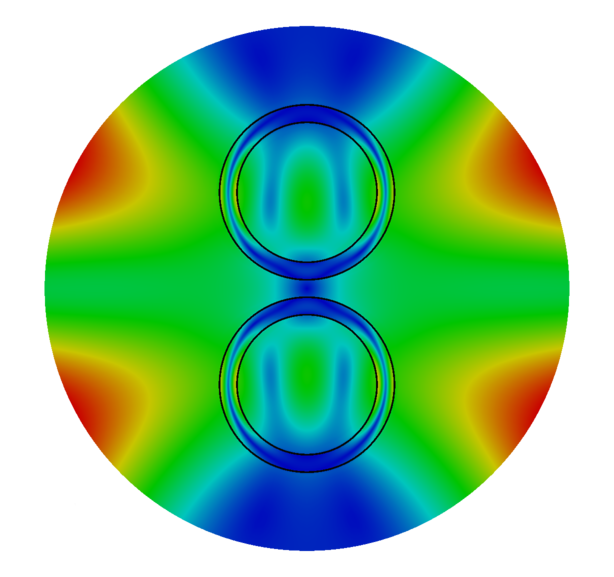} }; 
		\node[scale=0.8] at ( 15.10-1.5, 0.0+1.3 ) {$\om_{11}$};
		\draw( 15.10, 0.0) node { \includegraphics[scale=0.1]{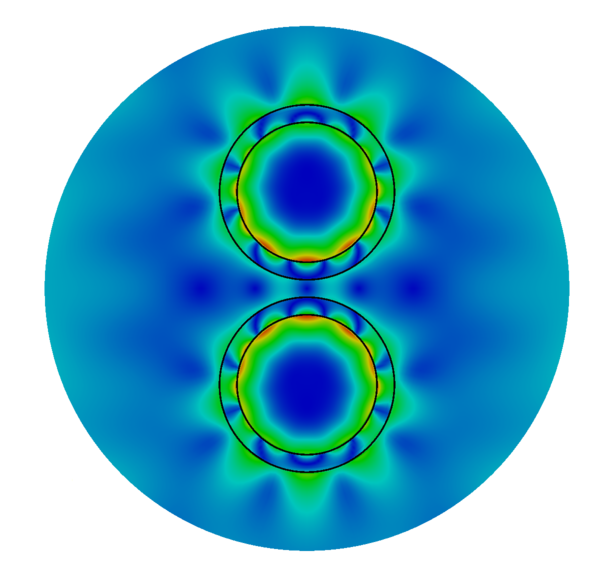} };
		\node[scale=0.8] at ( 15.10-1.5,-2.5+1.3 ) {$\om_{15}$};
		\draw( 15.10,-2.5) node { \includegraphics[scale=0.1]{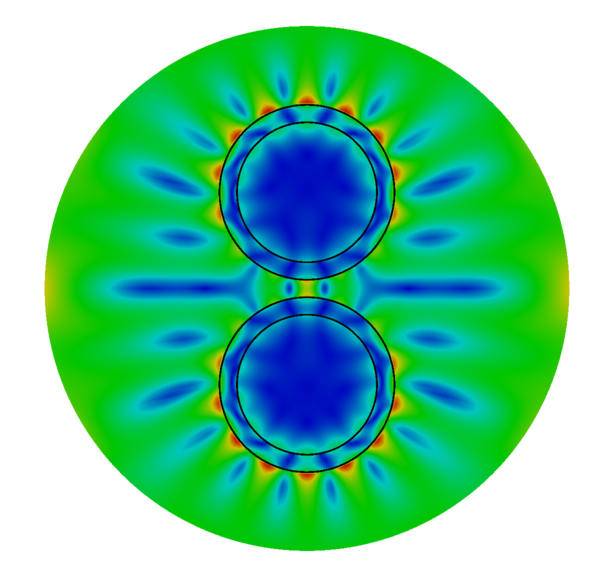} };
		\node[scale=0.8] at ( 15.10-1.5,-5.0+1.3 ) {$\om_{19}$};
		\draw( 15.10,-5.0) node { \includegraphics[scale=0.1]{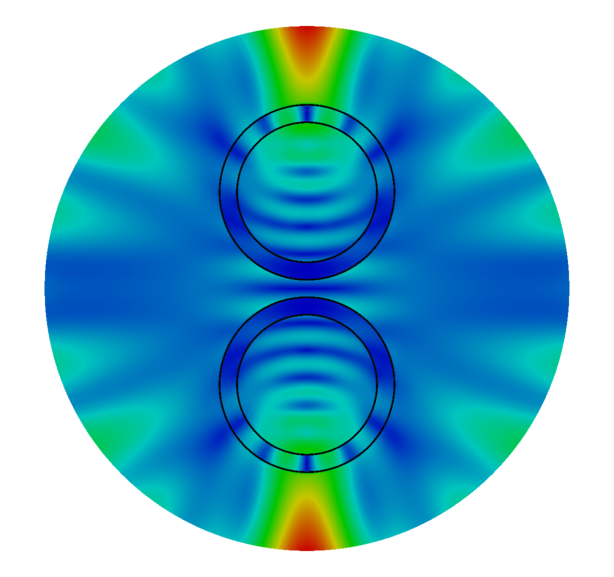} };
		
		\node[scale=0.8] at ( 18.40-1.5, 5.0+1.3 ) {$\om_{4}$};
		\draw( 18.40, 5.0) node { \includegraphics[scale=0.1]{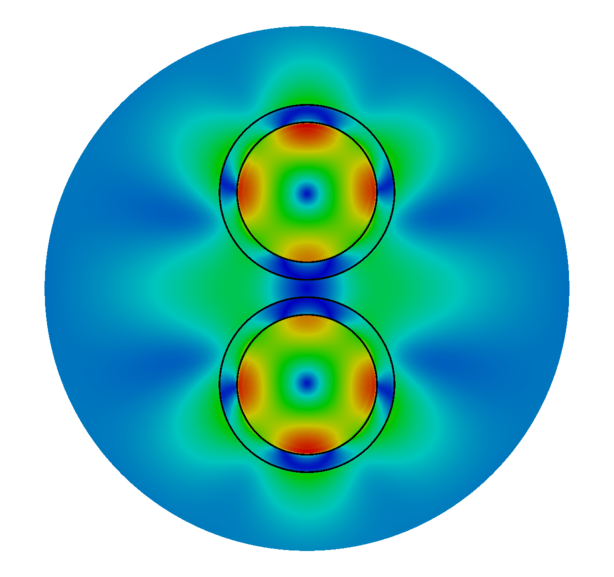} };
		\node[scale=0.8] at ( 18.40-1.5, 2.5+1.3 ) {$\om_{8}$};
		\draw( 18.40, 2.5) node { \includegraphics[scale=0.1]{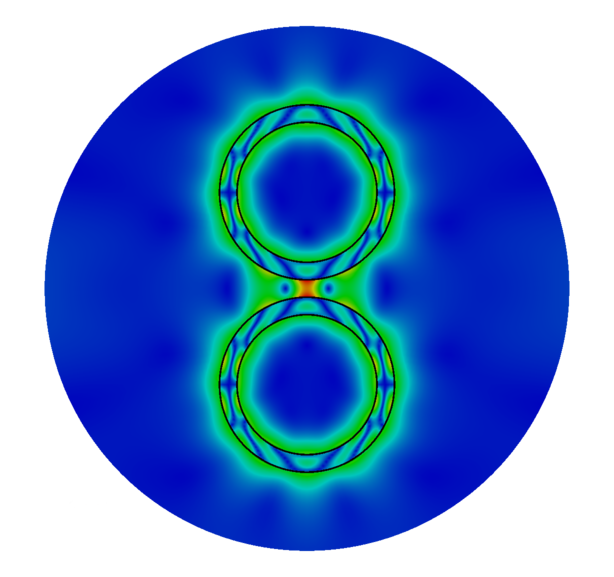} };	
		\node[scale=0.8] at ( 18.40-1.5, 0.0+1.3 ) {$\om_{12}$};
		\draw( 18.40, 0.0) node { \includegraphics[scale=0.1]{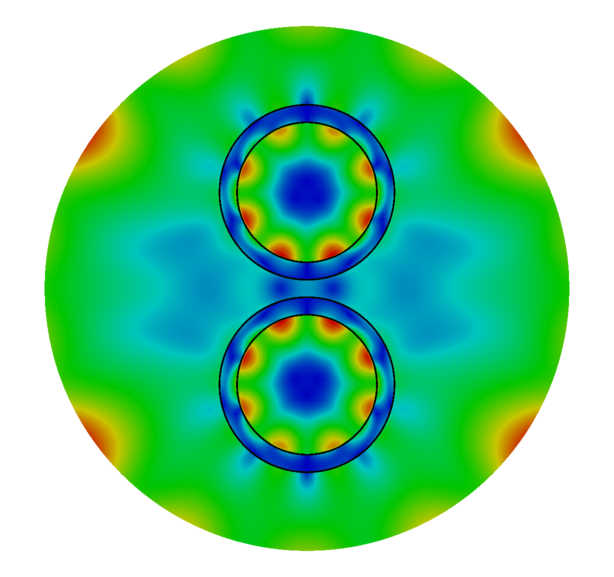} };
		\node[scale=0.8] at ( 18.40-1.5,-2.5+1.3 ) {$\om_{16}$};
		\draw( 18.40,-2.5) node { \includegraphics[scale=0.1]{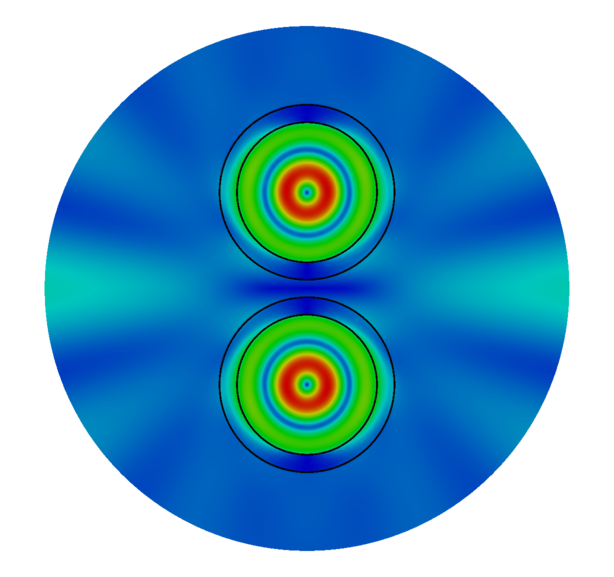} };
		\node[scale=0.8] at ( 18.40-1.5,-5.0+1.3 ) {$\om_{20}$};
		\draw( 18.40,-5.0) node { \includegraphics[scale=0.1]{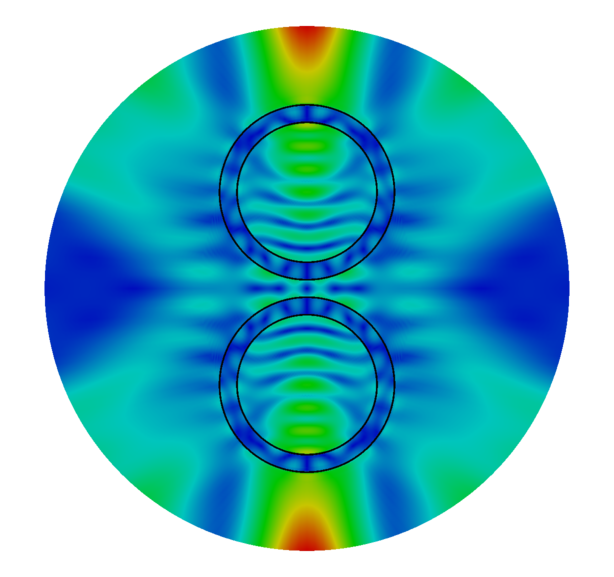} };
		
	\end{tikzpicture}
	\vspace*{-4mm}
	\caption{\emph{Spectral window for the \emph{coated dimer} problem \ref{sec:Cdimer} with TE polarization. FEM+NLEIGS eigenvalues $\om_\fem$ are shown with dots, poles $z$ of $\eps(\om)$ with squares and its zeros with diamonds. The plasmonic branch points are marked with $\times$ and $+$. Right) In colors we plot $\|E_j\|$ from the resonant mode corresponding to $\om_j$ listed in table \ref{tab:CD_reference}. } }
	\label{fig:eigs_CD_TE}
\end{figure}


\section{Numerical experiments and results}\label{sec:results}

In this section we perform numerical computations to test the reliability and performance of the proposed solution strategy. Particularly, we present a comparison of classical finite element error convergence against convergence of the a-priori strategies presented in Section \ref{sec:strategies}. 
For the comparison, we define the \emph{gain} 
as the percentage of reduction in degrees of freedom compared to using classical FE refinement strategies at a fixed relative error.
From a conforming coarse triangulation $\mc T(\Om)$ with no ghost nodes, the classical $h$ refinement strategy consists in keeping $p$ fixed, and performing consecutive refinements by splitting each element in $2^d$ new elements. The classical $p$ refinement strategy consists in keeping the number of elements constant and increasing $p$ uniformly in each cell. 

\subsection{Results for non-dispersive problems} 

The studies are performed on the problems described in Section \ref{sec:benchmarks}, where expressions for the reference solutions are given for most problems. Furthermore, all given study cases have piecewise analytic coefficients and the domains have no corners. Hence, the expected optimal asymptotic error estimates are:
\begin{equation}
	\begin{array}{ll}
		\|u-u_h\|_l \!\!\!&\leq C(\omega)\,h^{p-l+2}\|u\|_l,\,\,\text{for }h\hbox{-FE} \text{ with } l=0,1,\\[2mm]
		\|u-u_h\|_l \!\!\!&\leq C(\omega)\,e^{-\alpha_l N^{1/d}}\|u\|_l,\,\,\text{for }p\hbox{-FE} \text{ with } l=0,1,
	\end{array}
	\label{eq:optimal_estimates}
\end{equation}
where we denote by $\|\cdot\|_l$ the standard $H^l(\Om)$ norm. For the convergence studies, we use the relationship $N\leq ch^{-d},\,c>0$ valid for \emph{shape regular} meshes \cite[Sec. 4.3]{Schwab1998}.  

In the following sections we discuss the results of the convergence study.

\subsubsection{Results for 1D problems} \label{sec:res1d}

We start by describing general observations resulting from computations on the 1D problems described in Section \ref{sec:models_1D}, where the discretization is based on the DtN formulation described in Sec. \ref{sec:resonances1D}. 
First, we gather results for the single \emph{slab} problem \ref{sec:slab1d} in Figure \ref{fig:convergence_1D}. 
In the upper strip we present classical $h$-FE relative errors corresponding to TM eigenvalues and eigenfunctions measured in $L_2$, and $H^1$ norms. Plots corresponding to $n_1=2,5,10$ are given for an eigenvalue close to the shift $\mu=10$. The results indicate that the optimal convergence rates \eqref{eq:optimal_estimates} are reached, and the same was observed for the TE polarization. The following horizontal strips depict $H^1$ errors for classical FE and for strategies \ref{sec:h_str} and \ref{sec:p_str}, from where it is observed that both proposed strategies 
effectively reduce the convergence's pre-asymptotic phase compared with classical FE refinements. 
However, the asymptotic rate of convergence remains naturally unaltered. Additionally, optimal convergence rates \eqref{eq:optimal_estimates} are reached for both TM and TE polarizations, experiencing the same gain independently of polarization. The second and third horizontal strips in Figure \ref{fig:convergence_1D} show a gain from $h$-FE of $25\%,\,44\%$, and $47\%$, corresponding to $n_1=2,\,5$, and $10$ for a relative error around $10^{-3}$.
Similarly, the fourth and fifth horizontal strips in Figure \ref{fig:convergence_1D} show a gain from $p$-FE of $16\%,\,21\%$, and $35\%$, corresponding to $n_1=2,\,5$, and $10$ for a relative error around $10^{-6}$.
Additional numerical computations confirm that similar gains were observed for other shifts. Additionally, we observe from the plots that the gain by using the strategy  \ref{sec:p_str} increases for higher accuracies. This is expected as the proposed strategies are designed from Theorem \ref{thm:low_frequency}, and Theorem \ref{thm:high_frequency}.
The results confirm that the a-priori strategies \ref{sec:h_str} and \ref{sec:p_str}, achieve convergence with a shorter pre-asymptotic phase than classical FE-methods, 
with a gain proportional to the refractive index contrast. 
The results from computations on problem \ref{sec:multislab1d} are gathered in Figure \ref{fig:convergence_multi1D}, where it becomes evident that
the use of the proposed strategies also work well for problems with piecewise constant coefficients. Computations feature a gain of $37\%$ for a relative error of order $10^{-3}$ in $h$-FE, whereas $36\%$ gain for a relative error of order $10^{-6}$ in $p$-FE.

\subsubsection{Results for the \emph{single disk} problem}\label{sec:res_SD} 

In this section the a-priori strategies \ref{sec:h_str}, and \ref{sec:p_str} are tested for configurations in 2D.
In particular, the results for the problem \ref{sec:SD}, with $n_1=5$, are gathered in Figure \ref{fig:convergence_n5_SD}. In this problem, eigenpairs are numbered using the angular integer $m$ as suggested by \eqref{eq:exact}. 
We compute pairs for TM with $\mu=10.2-0.04i$, and for TE with $\mu=9.85-0.04i$. 
The results for both
polarizations are very similar to those discussed in Section \ref{sec:res1d} for 1D.
Particularly, the eigenvalue error for this problem converges following the optimal rates \eqref{eq:optimal_estimates}, and both a-priori strategies achieve convergence with a shorter pre asymptotic phase compared to classical FE-methods.
Particularly we achieve a gain of $36\%$ for the $h$-strategy with relative error of order $10^{-6}$, 
whereas up to $17\%$ in the $p$-strategy with relative error of order $10^{-8}$. 
Furthermore, we see that for the chosen $\mu$, eigenfunctions with different $m$ exhibit the same gain.

\begin{remark}\label{rem_wgm}
For large angular values $m$, eigenfunctions in this problem are expected to exhibit localized oscillations around the boundary of the dielectric disk (juncture with air) that extend to air. These are known as whispering-Gallery-modes (WGM) \cite{johnson93}. It is observed that the strategies \ref{sec:h_str}, and \ref{sec:p_str} underestimate the FE requirements for correct approximation of these modes, as we refine cells according to bulk estimators/goals, and contributions from edges are not considered. However, since we know in advance where to perform mesh refinements it is straightforward to setup a-priori strategies for accurate computation of these modes. From now on, we exclude these type of modes from our discussions.
\end{remark}


\subsection{Results for dispersive problems}

In the remainder of the section we gather results from problems described in sections \ref{sec:SDC}, and \ref{sec:Cdimer}, which feature dispersive material properties. The positive results from last sections indicate that the a-priori strategies \ref{sec:h_str}, and \ref{sec:p_str} applied to non-dispersive problems perform best when there is a high contrast in the refractive index. Similarly, we expect to obtain greater gains when $|n(\om)|$ is large.
We start by testing the reliability and performance of the NEP solution strategy described in \ref{sec:NLEIGS}. 
Particularly, we check that the strategy can be used to obtain good approximations to the exact resonances even close to the poles and zeros of $\eps_{metal}(\om)$ given in \eqref{eq:drude_lorentz}.
Finally, we consider the error convergence for the problem presented in \ref{sec:Cdimer}, 
which is computationally more demanding.
\subsubsection{Results for the \emph{single coated disk} problem}\label{sec:res_CSD}

In order to test the reliability of the proposed NEP solver strategy, we use the Benchmark presented in 
Section \ref{sec:SDC}. From \eqref{eq:eig_matrix}, we compute approximations to the resonances given 
by \eqref{eq:reson_CSD}. In Figure \ref{fig:eigs_SCD}, we present the result after taking multiple shifts inside a relatively large spectral window, from where we observe an excellent agreement between approximations and exact resonances. We conclude that the proposed a-priori strategies together with SLEPc's implementation of NLEIGS result in excellent approximations of the exact pairs even close to the poles and zeros of model \eqref{eq:drude_lorentz}.
 
Moreover, computations corresponding to TE polarization feature a sequence of resonances accumulating around the so-called plasmonic branch points of the model, which are the values of $\om$ such that $\eps_{metal}=-1,\,\eps_{metal}=-2$. For reference, we mark them with $\times$, and $+$ respectively. 
From Figure \ref{fig:eigs_SCD} and TE polarization (right), we observe that the approximation $\om^\fem=0.6288-0.6288i$ converged to an eigenvalue of the modified PML problem, which differs considerably from the exact value $\om=0.5569-0.6457i$. The reason is that $\om^\fem$ is close to the critical line of the PML \cite{araujo+engstrom+2017}. 

The performance of the solver is evaluated in Fig.~\ref{fig:performance}, where
the plots illustrate the strong scaling of the parallel code, that is, how the execution time varies for increasing number of processes with a fixed problem size. 
Since the problem size is constant, for large number of processes the performance degrades, because the amount of work assigned to each process is too small.
We can see that the run time for 128 processes grows with respect to 64 processes; if the test problems were bigger this performance degradation would occur later for larger number of processes.
Still, we cannot expect to scale to many more processes since the solver employs a direct linear solver (MUMPS in our case) for one step of the algorithm, which has limited scalability. 
The figure also shows that the total execution time in the case of higher polynomial degree (right plots) is significantly smaller than for the higher refinement level (left plots). This is due to a much smaller problem size, see Table \ref{tab:matsize}, even though the generated matrices are much less sparse. A shorter time and a higher percentage of nonzero elements also implies a worse scalability, as it can also be seen in the right plots.
\begin{figure}[t]
\centering
\begin{tikzpicture}[scale=0.75]
  \begin{loglogaxis}[
    title={\textsf{TM, $r=8$, $p_0=2$}},
    ylabel={Time [s]},
    grid=major,
    log basis x=2,
    xtick={1,2,4,8,16,32,64,128},
    xticklabels={1,2,4,8,16,32,64,128},
    ticklabel style={font=\small},
    legend style={at={(.98,.98)},anchor=north east,cells={anchor=west},font=\small}
    ]
    \addplot coordinates { 
          (  1,  3.9590e+03)
          (  2,  9.8398e+02)
          (  4,  6.5973e+02)
          (  8,  3.7497e+02)
          ( 16,  2.3655e+02)
          ( 32,  1.7228e+02)
          ( 64,  1.4078e+02)
          (128,  1.4852e+02)
    };
  \end{loglogaxis}
\end{tikzpicture}
\hspace{3mm}
\begin{tikzpicture}[scale=0.75]
  \begin{loglogaxis}[
    title={\textsf{TM, $r=3$, $p_0=10$}},
    ylabel={Time [s]},
    grid=major,
    log basis x=2,
    xtick={1,2,4,8,16,32,64,128},
    xticklabels={1,2,4,8,16,32,64,128},
    ticklabel style={font=\small},
    legend style={at={(.98,.98)},anchor=north east,cells={anchor=west},font=\small}
    ]
    \addplot coordinates { 
          (  1, 7.2342e+01 )
          (  2, 4.3065e+01 )
          (  4, 2.6399e+01 )
          (  8, 1.4087e+01 )
          ( 16, 8.7583e+00 )
          ( 32, 5.7665e+00 )
          ( 64, 5.2629e+00 )
          (128, 8.1390e+00 )
    };
  \end{loglogaxis}
\end{tikzpicture}
\\[2mm]
\hspace{0.3mm}
\begin{tikzpicture}[scale=0.75]
  \begin{loglogaxis}[
    title={\textsf{TE, $r=8$, $p_0=2$}},
    ylabel={Time [s]},
    grid=major,
    log basis x=2,
    xtick={1,2,4,8,16,32,64,128},
    xticklabels={1,2,4,8,16,32,64,128},
    ticklabel style={font=\small},
    legend style={at={(.98,.98)},anchor=north east,cells={anchor=west},font=\small}
    ]
    \addplot coordinates { 
          (  2, 1.9624e+03 )
          (  4, 1.3223e+03 )
          (  8, 6.4399e+02 )
          ( 16, 4.0866e+02 )
          ( 32, 2.5364e+02 )
          ( 64, 1.9569e+02 )
          (128, 2.0276e+02 )
    };
  \end{loglogaxis}
\end{tikzpicture}
\hspace{4.5mm}
\begin{tikzpicture}[scale=0.75]
  \begin{loglogaxis}[
    title={\textsf{TE, $r=3$, $p_0=10$}},
    ylabel={Time [s]},
    grid=major,
    log basis x=2,
    xtick={1,2,4,8,16,32,64,128},
    xticklabels={1,2,4,8,16,32,64,128},
    ticklabel style={font=\small},
    legend style={at={(.98,.98)},anchor=north east,cells={anchor=west},font=\small}
    ]
    \addplot coordinates { 
          (  1, 1.1644e+02 )
          (  2, 7.6413e+01 )
          (  4, 4.8621e+01 )
          (  8, 2.2261e+01 )
          ( 16, 1.4908e+01 )
          ( 32, 9.6517e+00 )
          ( 64, 9.2191e+00 )
          (128, 1.1252e+01 )
    };
  \end{loglogaxis}
\end{tikzpicture}
\caption{\emph{Parallel execution time (in seconds) of the solver for varying number of MPI processes (up to 128), for problem is \ref{sec:res_CSD}. The target used is $\mu=5.3-0.25i$, where top and bottom plots correspond to TM and TE polariazations respectively. Left plots correspond to a discretization dominant in the $h$-strategy ($r=8$ levels of refinement, polynomial order $p_0=2$), while right plots are for a discretization dominant in the $p$-strategy ($r=3$ levels of refinement, polynomial order $p_0=10$). }}
\label{fig:performance}
\end{figure}
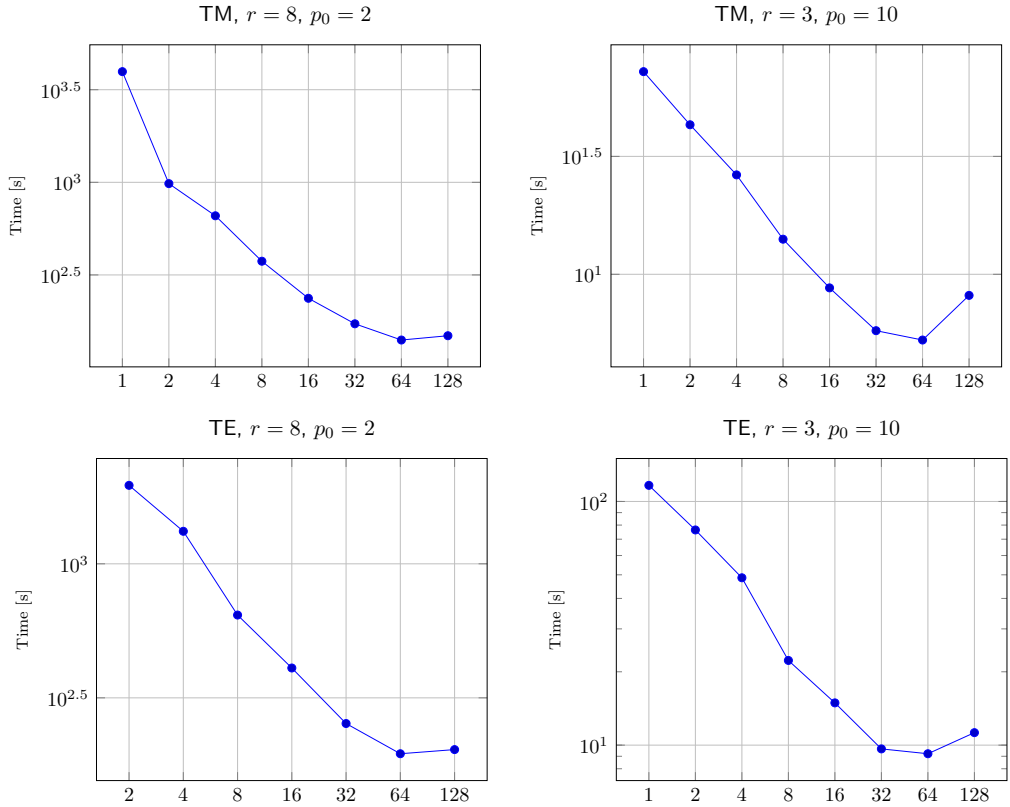

\begin{table}[]
\centering
\begin{tabular}{lll}
Problem             & Matrix size & \% of nonzeros \\ \hline
\textsf{Single coated disk, $r=8$, $p_0=2$}   &   4,288,929 & 0.00037 \\
\textsf{Single coated disk, $r=3$, $p_0=10$} &     173,725 & 0.062 \\ \hline
\end{tabular}
\caption{Dimensions and sparsity of the matrix that is factorized during the execution of the NLEIGS solver, for the two test cases of Fig.~\ref{fig:performance}. The number of uniform $h$ refinements is denoted with $r$. }
\label{tab:matsize}
\end{table}

\subsubsection{Results for the \emph{coated dimer} problem}\label{sec:res_CD}
Finally, we present results for the coated dimer problem described in Section \ref{sec:Cdimer}, from where the reference values $\om_j$ listed in Table \ref{tab:CD_reference} were computed from \eqref{eq:eig_matrix} with a very fine discretization.
In Figure \ref{fig:grid_cdimer}, we show part of the mesh utilized for this problem, and in colors we give the polynomial distribution $p_j$ per cell resulting by using the $p$-strategy \ref{sec:p_str}. The distributions shown correspond to $\mu=4.162-0.2648i$ with $p_0=7$ (left), and $\mu=2.9-0.422i$ with $p_0=10$ (right). We observe that the resulting a-priori strategy assigns lower polynomial degrees to cells with small diameters.
As seen from Figure \ref{fig:grid_cdimer}, the initial mesh contains a wide range of cell diameters. 
This property is exploited by the $p$-strategy \ref{sec:p_str}, because both $h$ and $p$ play a role when satisfying Goal \ref{def:goal}. 
The resulting a-priori refinement strategy features remarkable gains ranging from $35\%$ to $48\%$
compared to the classical $p$-FE. These gains depend on the selected $\mu$ and on the specific shape of the corresponding eigenfunctions. The error convergence for some of the computed eigenvalues is gathered in Figure \ref{fig:convergence_cdimer},
where we show convergence for both polarizations and different $\mu$ values. Similarly to the non-dispersive case, the application of the $p$-strategy \ref{sec:p_str} to this problem results in shorter pre-asymptotic phase of the error for the computed eigenpairs in both polarizations.
Finally, in the left panels of Figures \ref{fig:eigs_CD_TM}, and \ref{fig:eigs_CD_TE} we present computed eigenvalues from \eqref{eq:eig_matrix} by performing multiple shifts inside a relatively large spectral window. 
As expected, the location of the resulting eigenvalues resemble those from the \emph{single coated disk} in 
Figure \ref{fig:eigs_SCD}. Although being more densely populated, the spectral windows exhibit similar features like accumulations to poles, branch points, and similar location of resonances.
The Figures \ref{fig:eigs_CD_TM}, and \ref{fig:eigs_CD_TE} also include color plots for $\|E_j(x)\|$ corresponding to the $\om_j$ listed in Table \ref{tab:CD_reference}, where we have excluded the PML layer. These plots reveal the rich electromagnetic phenomena described by resonances and resonant modes.

\begin{figure}
	\begin{tikzpicture}[thick,scale=1.0, every node/.style={scale=0.9}]
		
		\draw(  0.00, 10.5) node {\includegraphics[scale=0.53]{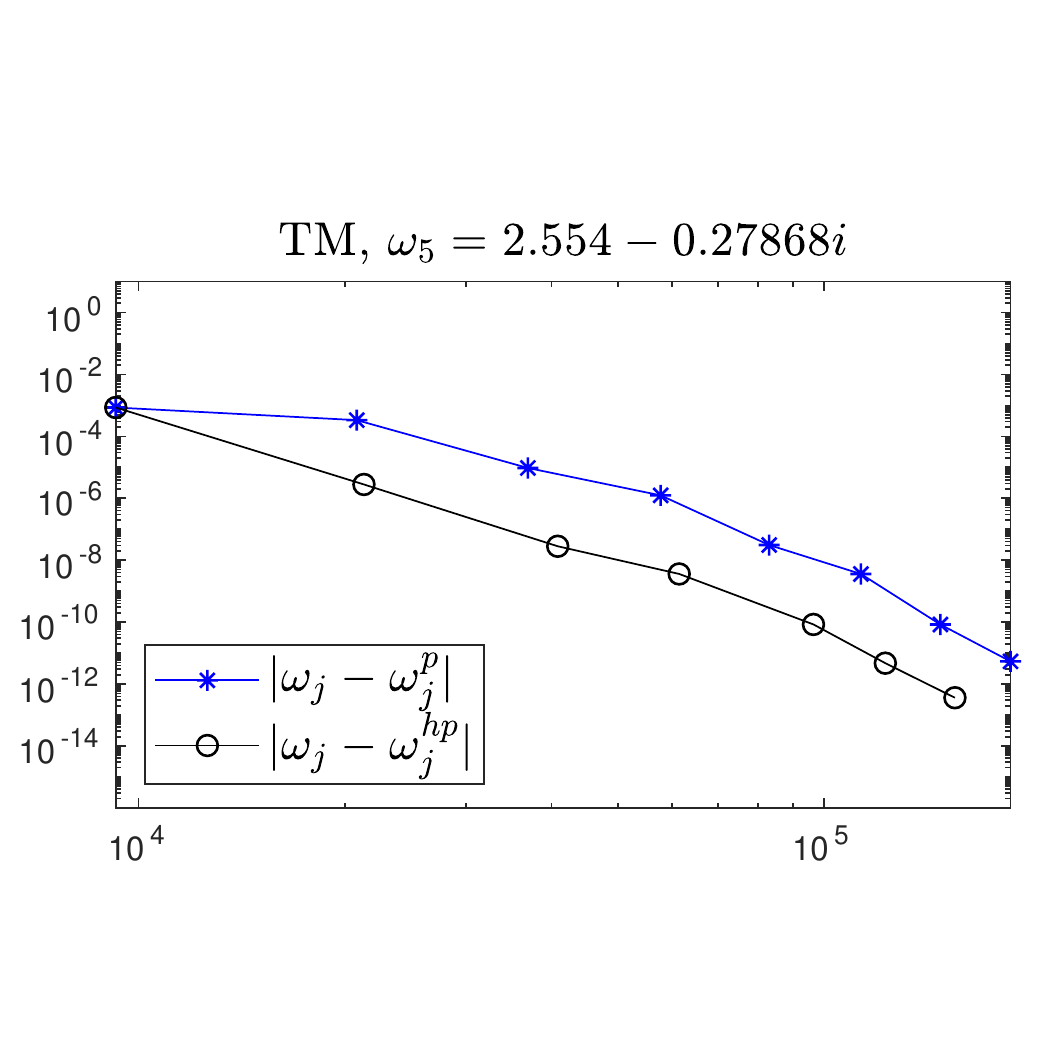}  };
		\draw(  5.50, 10.5) node {\includegraphics[scale=0.53]{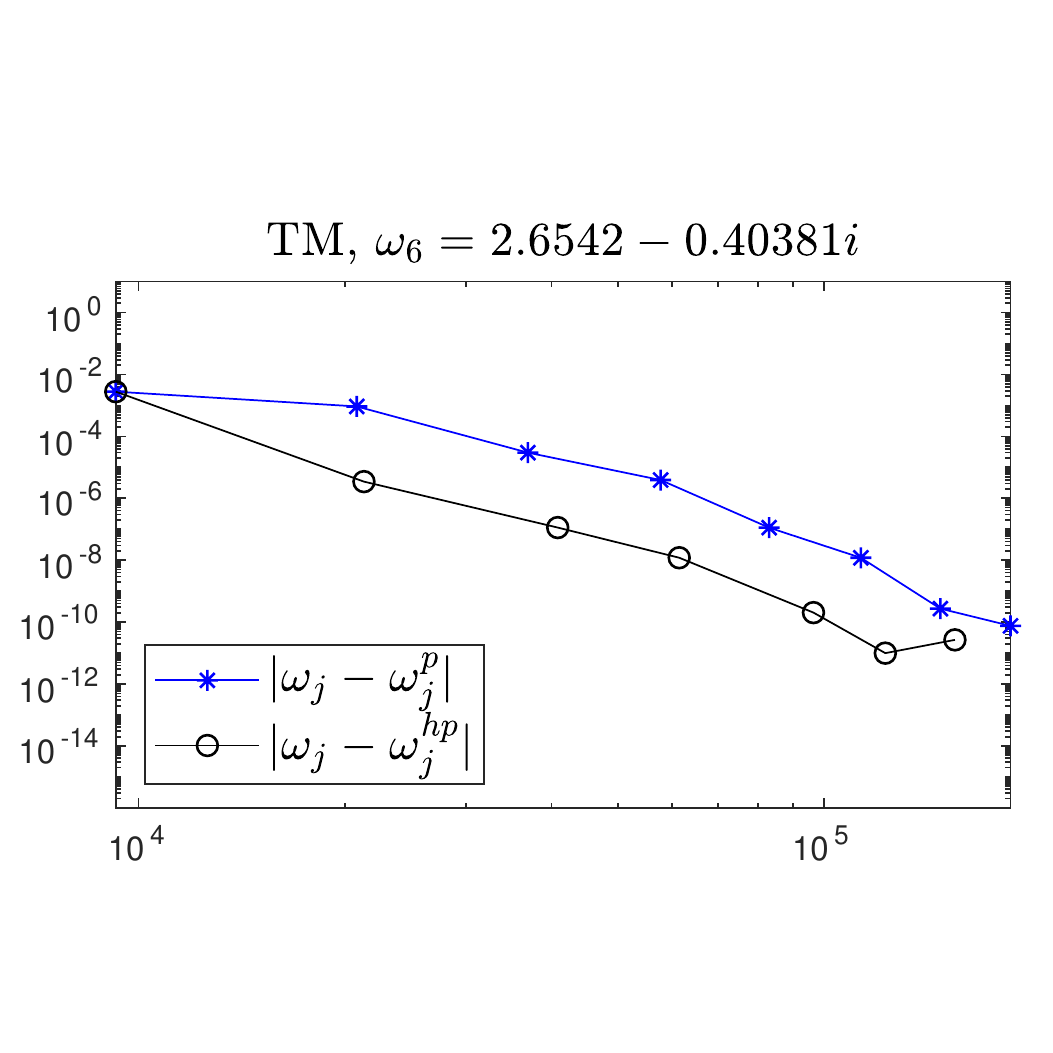}  };
		\draw( 11.00, 10.5) node {\includegraphics[scale=0.53]{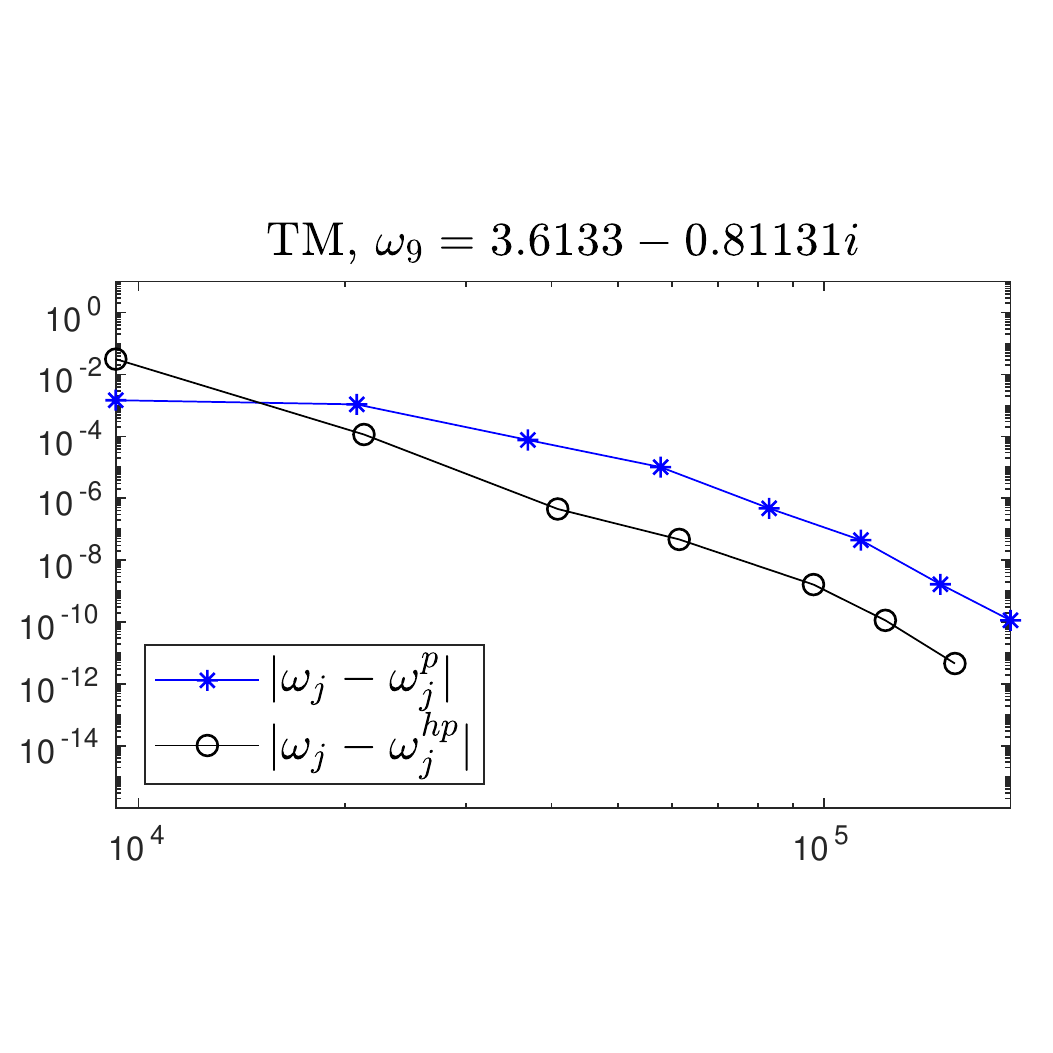}  };
		
		\draw(  0.00, 7.0) node {\includegraphics[scale=0.53]{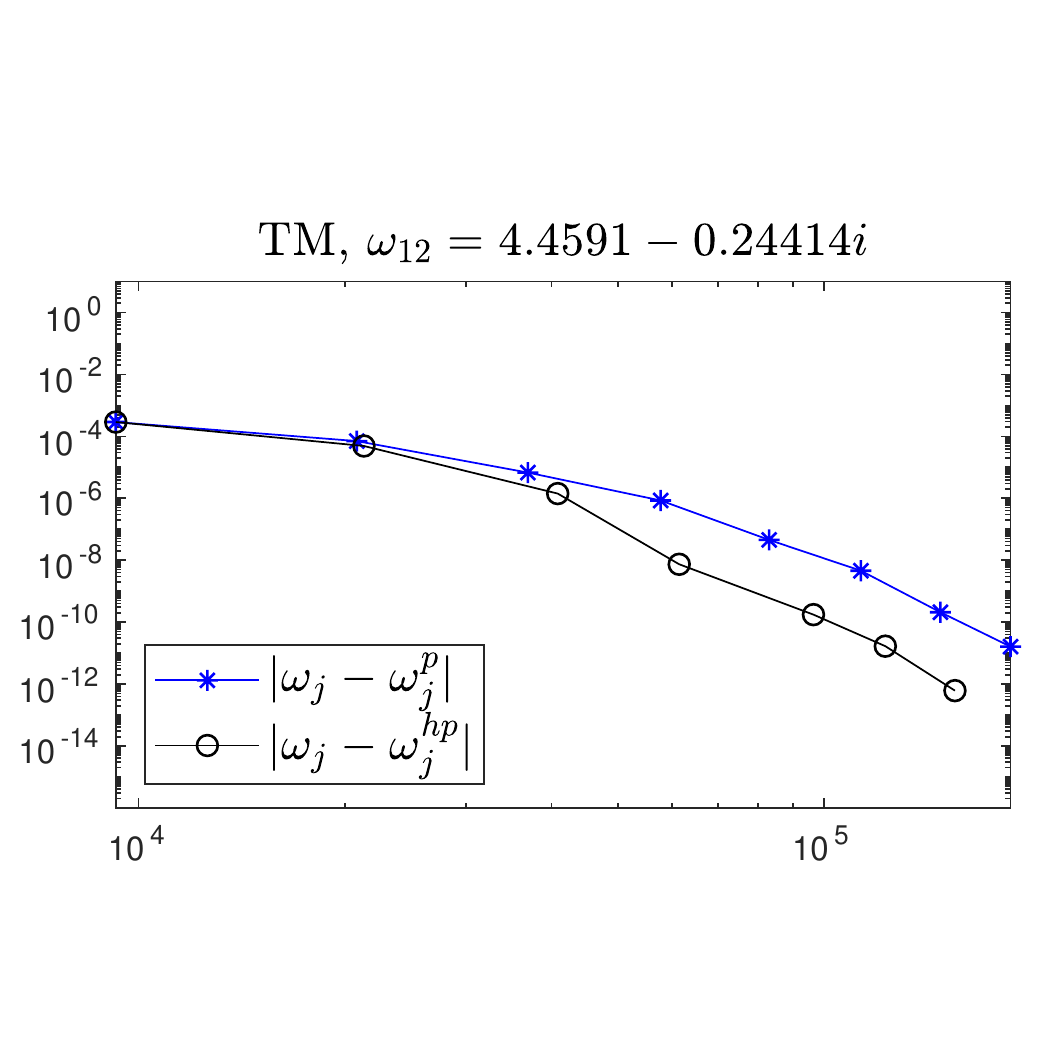}  };
		\draw(  5.50, 7.0) node {\includegraphics[scale=0.53]{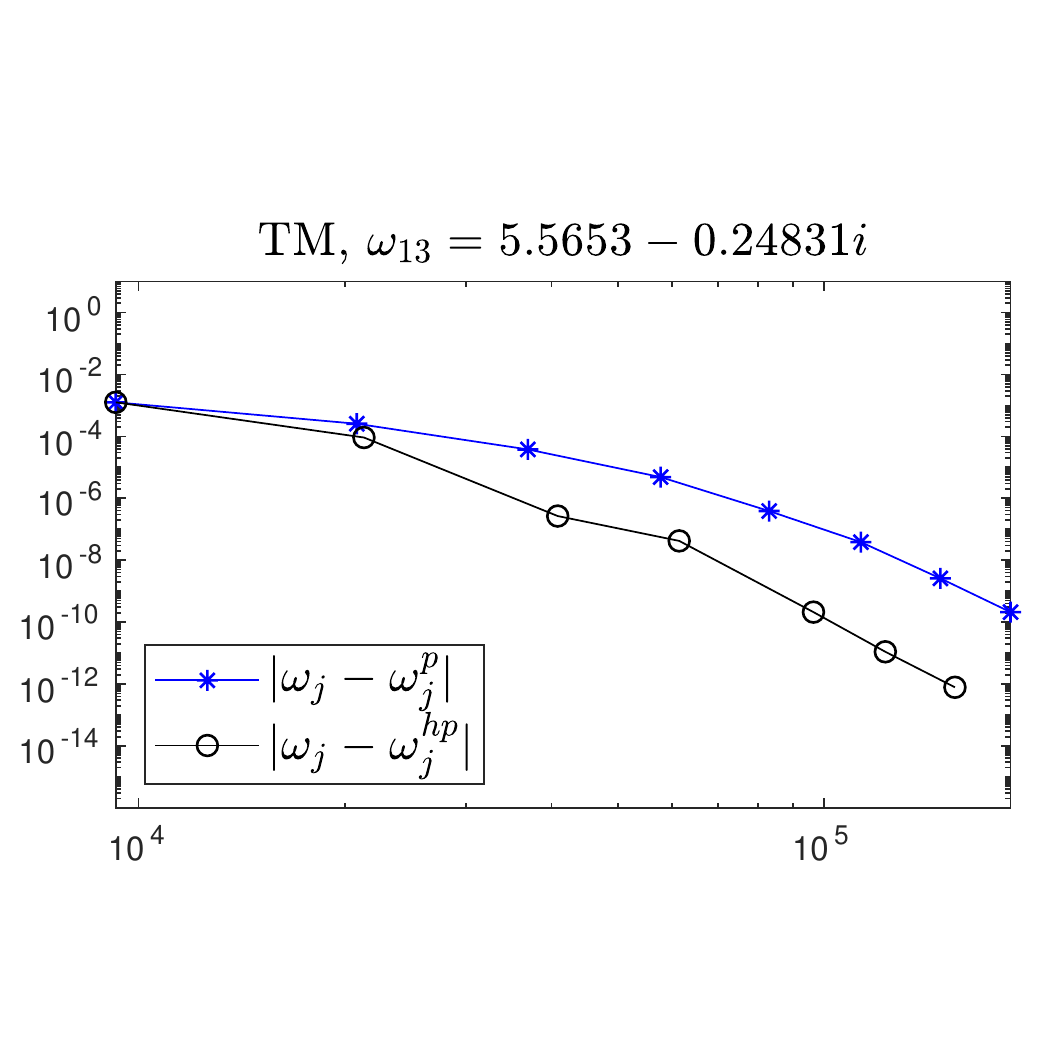}  };
		\draw( 11.00, 7.0) node {\includegraphics[scale=0.53]{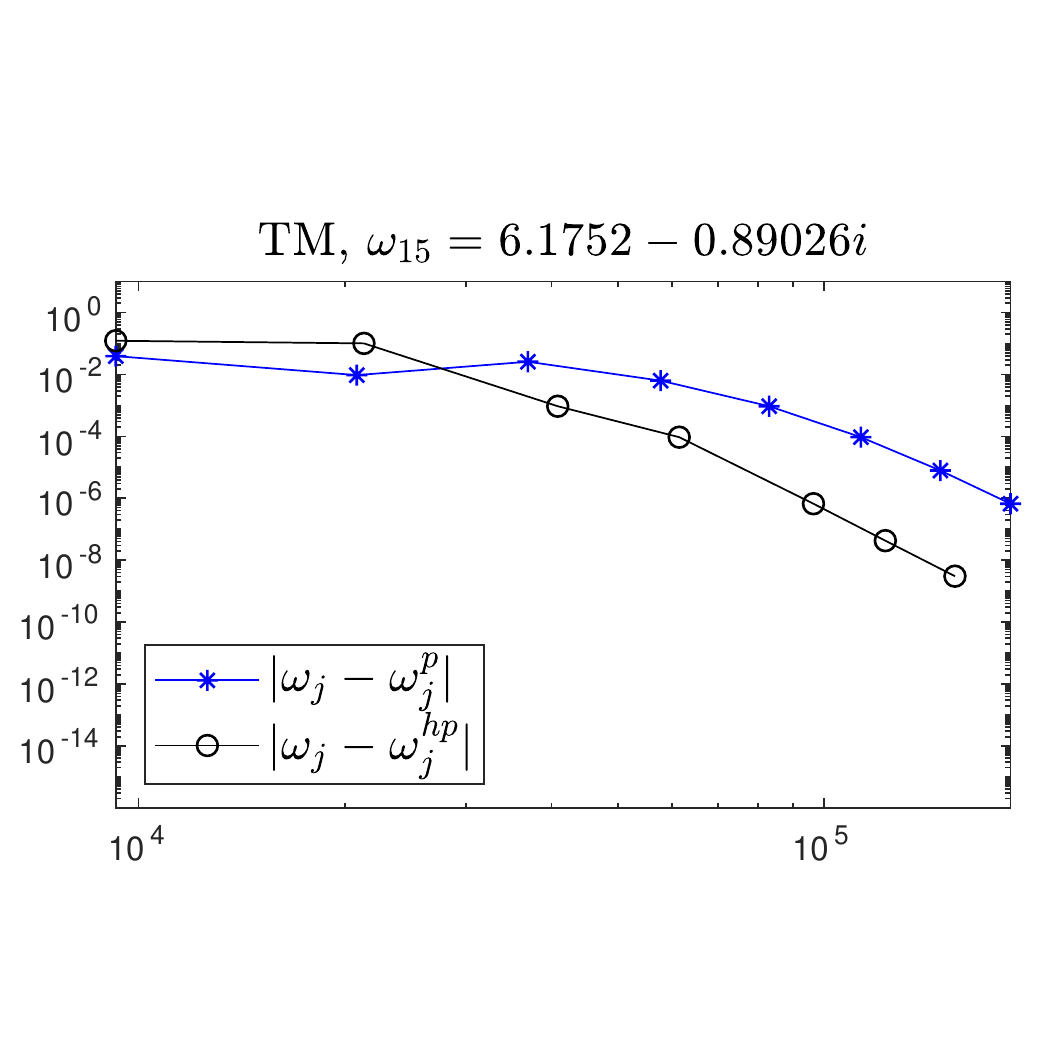}  };
		
		\draw(  0.00, 3.5) node {\includegraphics[scale=0.53]{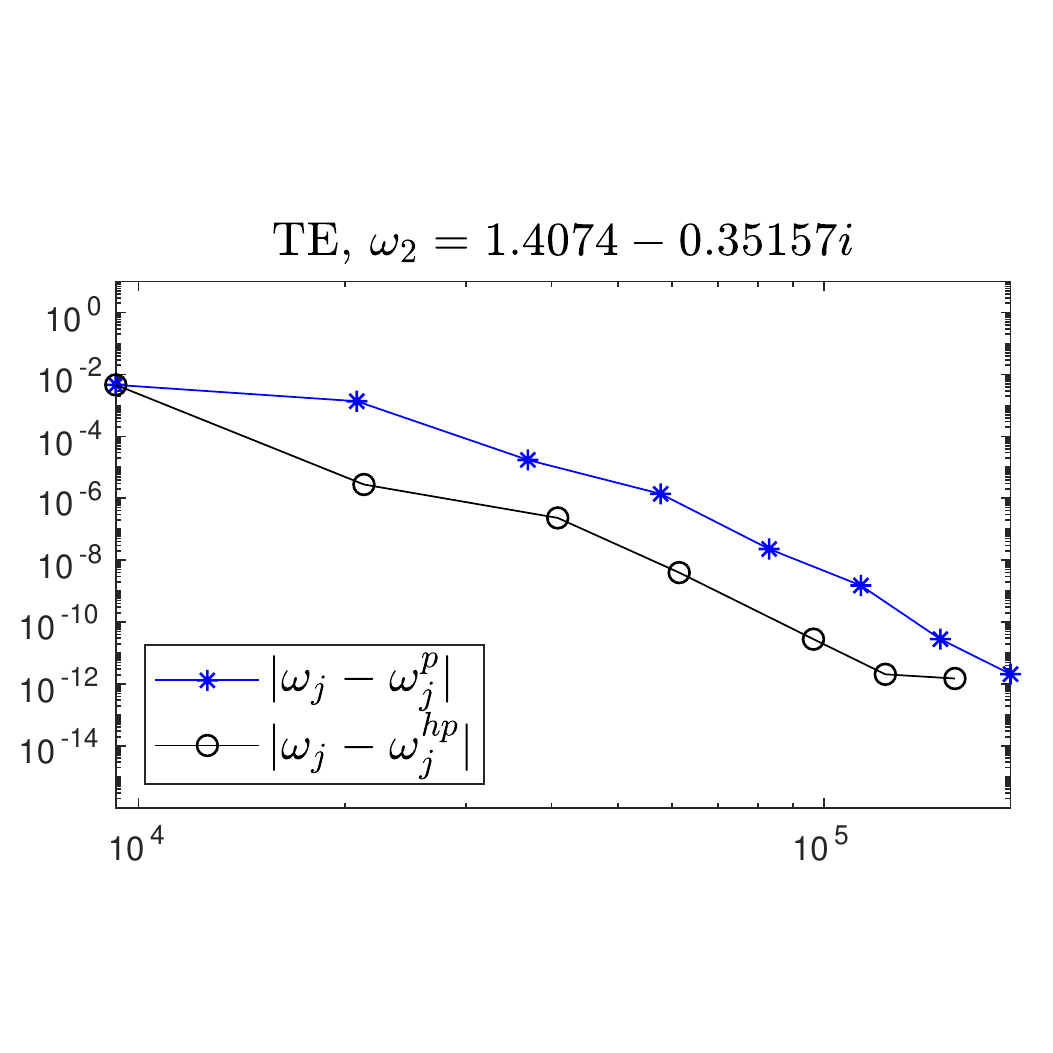}  };
		\draw(  5.50, 3.5) node {\includegraphics[scale=0.53]{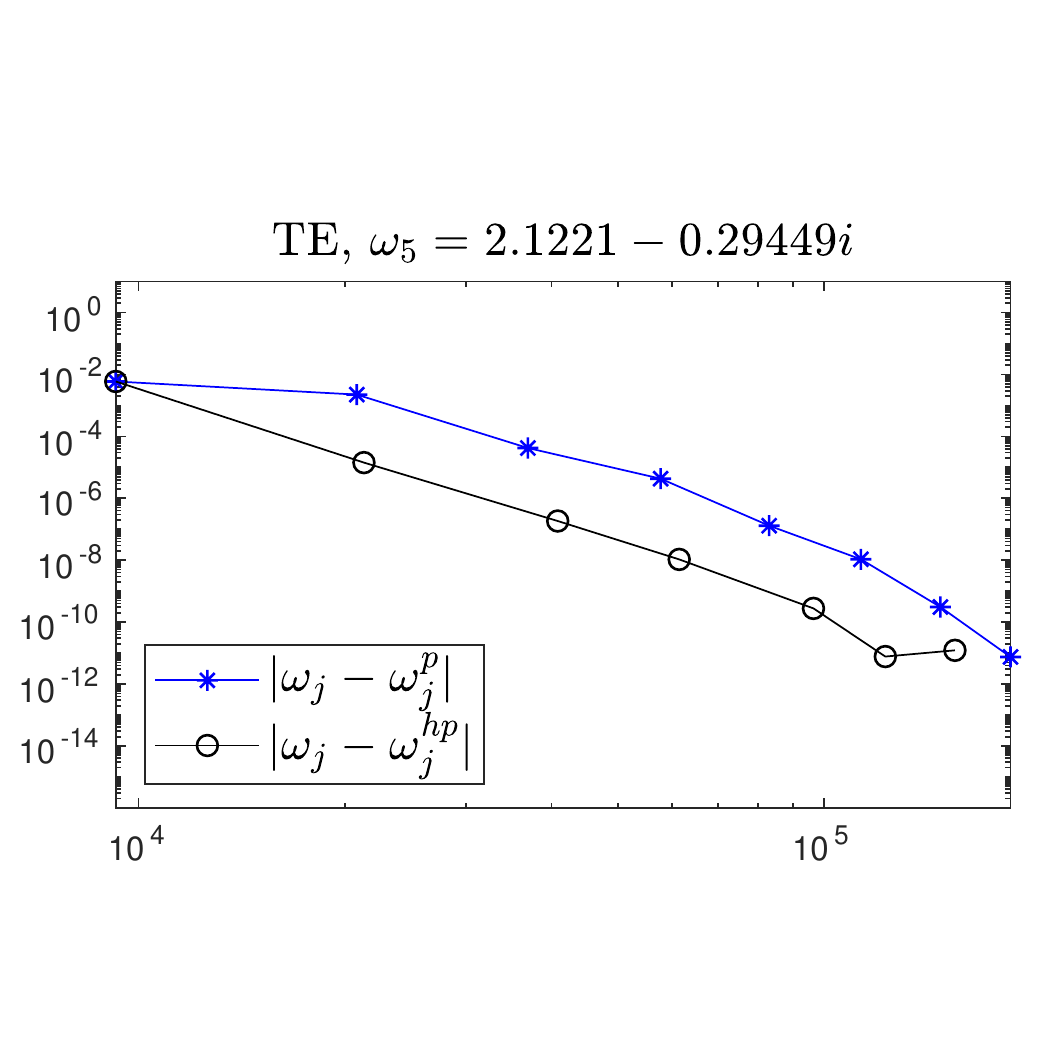}  };
		\draw( 11.00, 3.5) node {\includegraphics[scale=0.53]{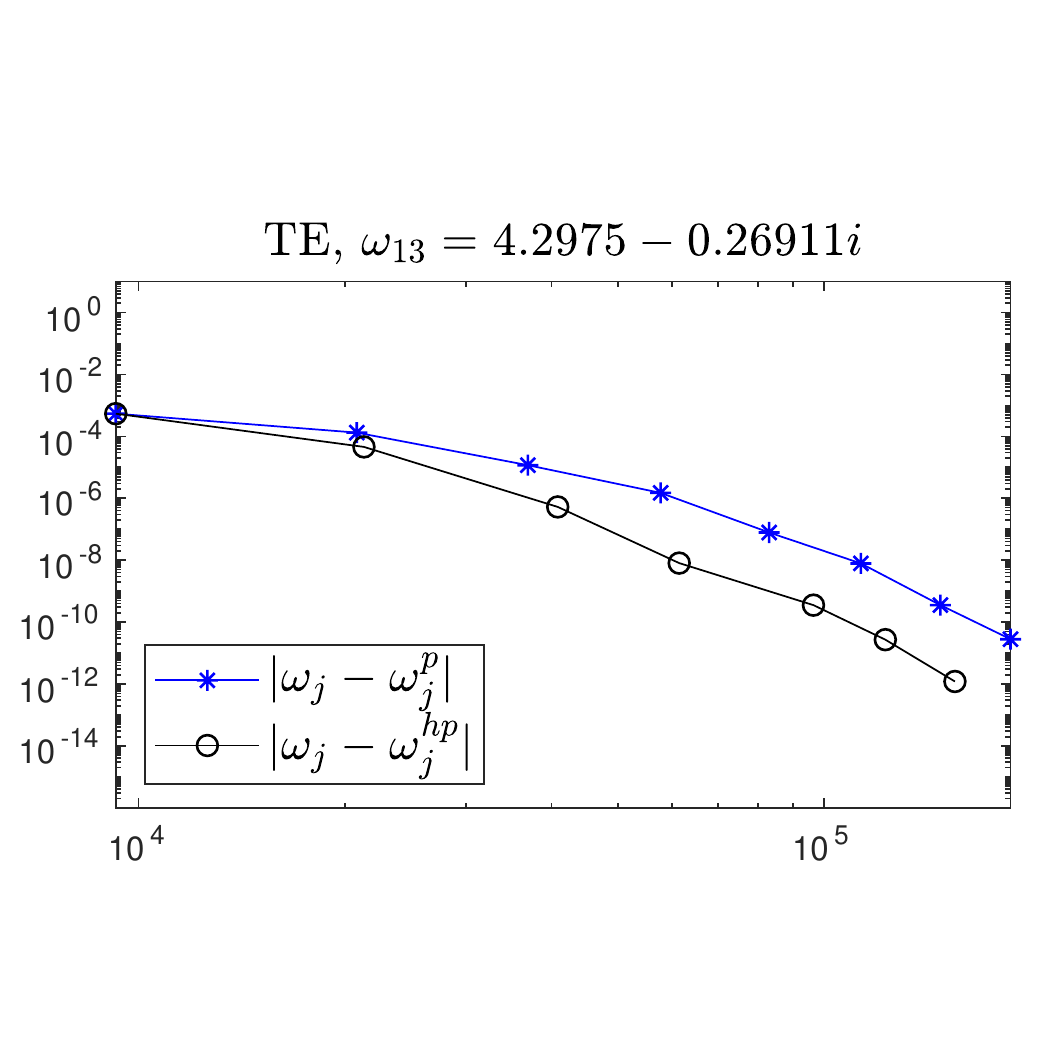}  };
		
		\draw(  0.00, 0.0) node {\includegraphics[scale=0.53]{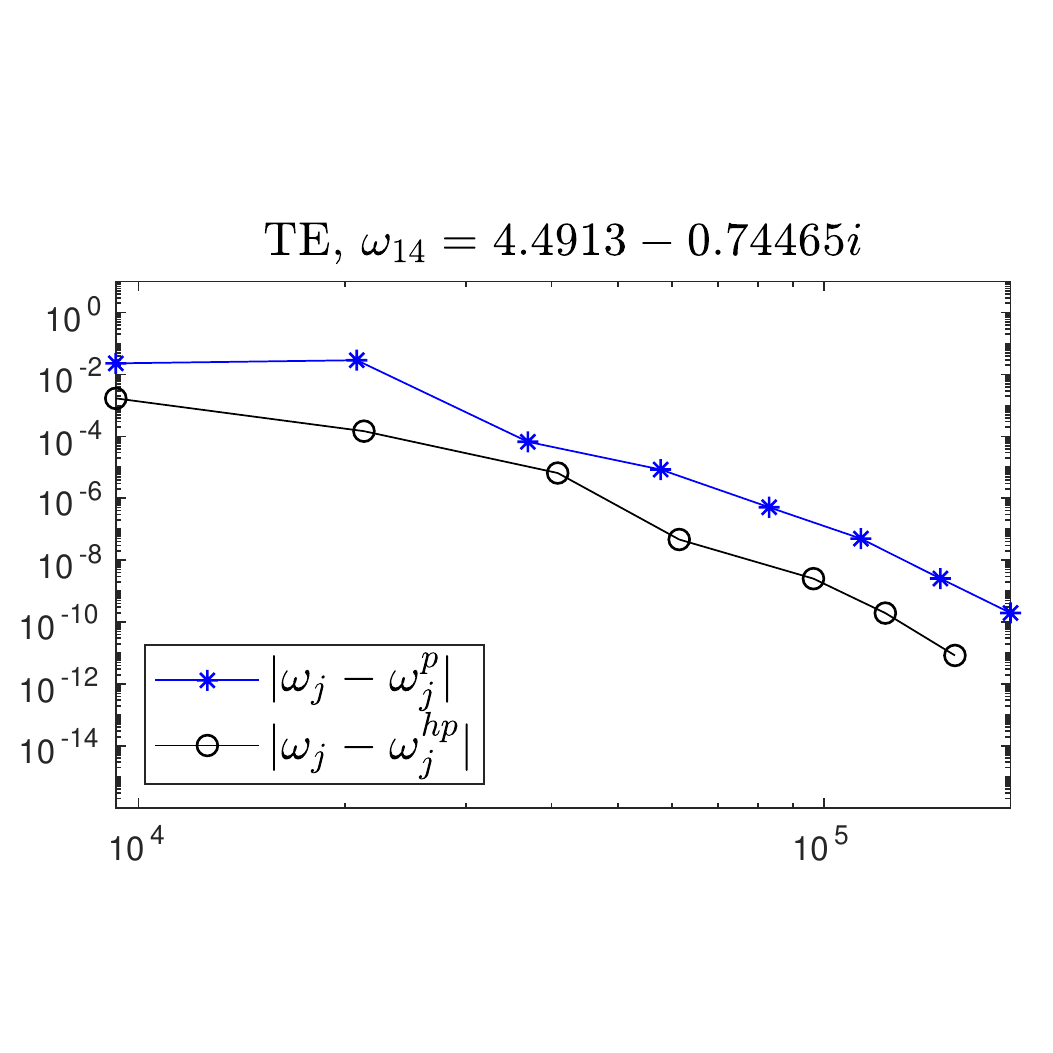}  };
		\draw(  5.50, 0.0) node {\includegraphics[scale=0.53]{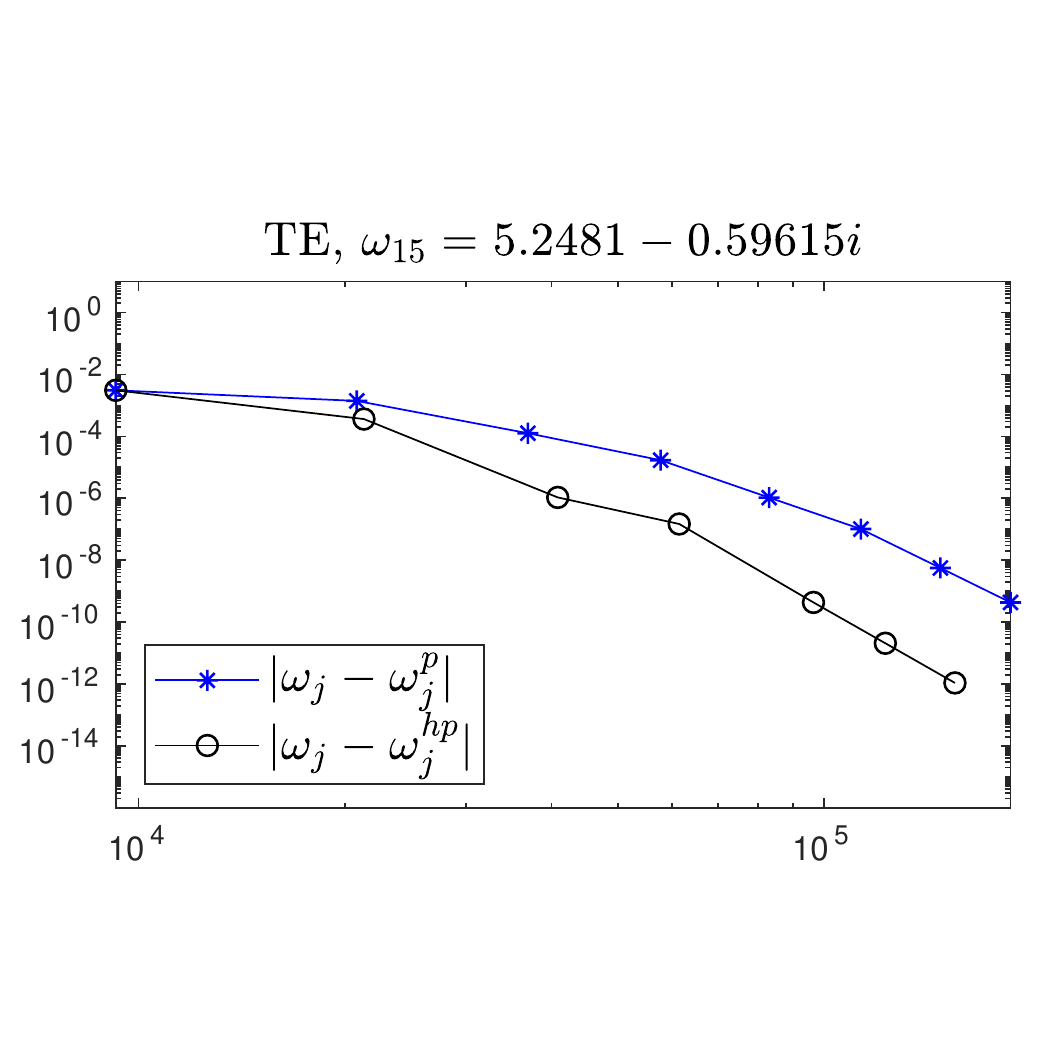}  };
		\draw( 11.00, 0.0) node {\includegraphics[scale=0.53]{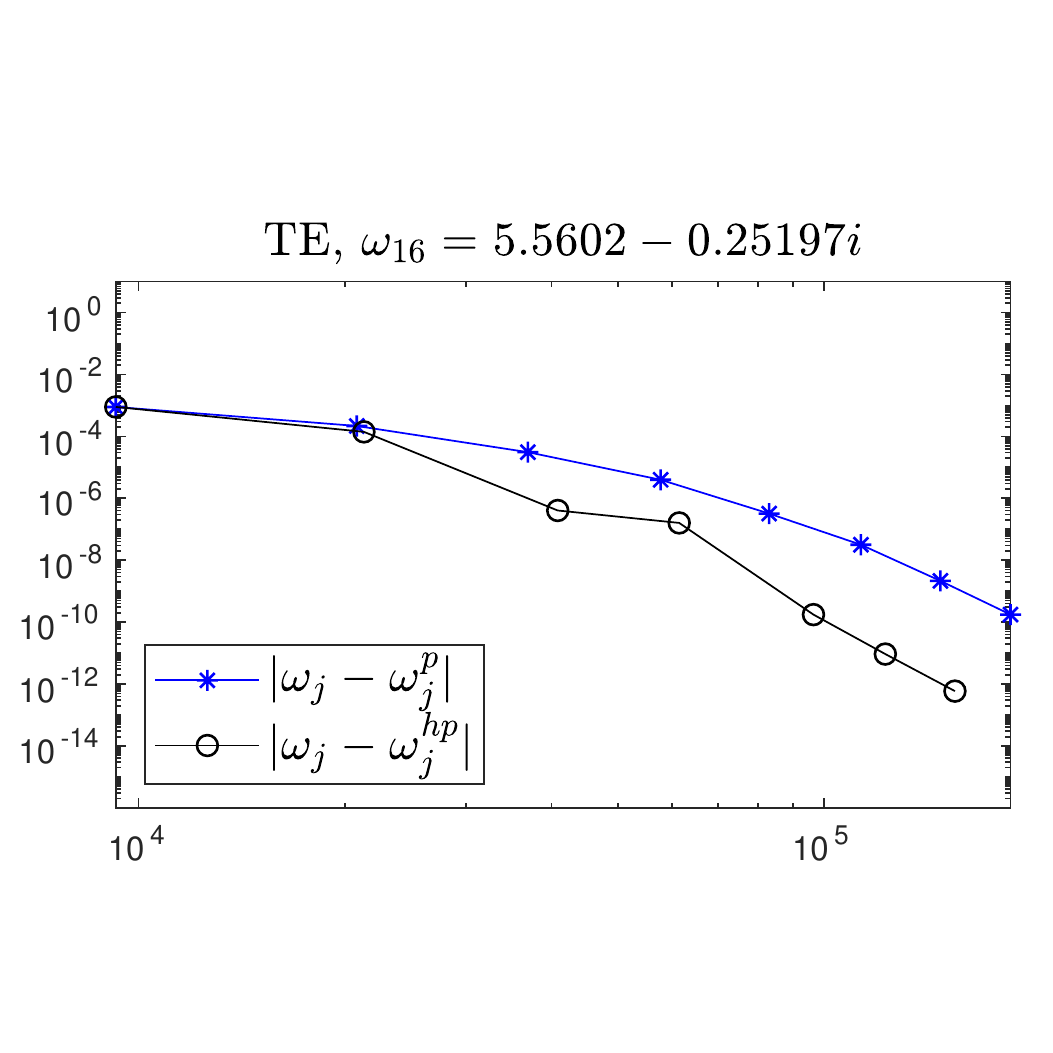}  };
		
	\end{tikzpicture}
	\vspace*{-3mm}
	\caption{\emph{Convergence $p$-FE plots (Relative errors vs. N) for TM and TE polarizations of the model problem presented in Sec. \ref{sec:Cdimer}. We mark with circles the a-priori $p$-strategy, and with stars classical $p$-FE refinements.}}
	\label{fig:convergence_cdimer}
\end{figure}

\section{Conclusions}
We have proposed an $hp$-refinement strategy for approximation of complex scattering resonances in optics. Numerical computations in demanding 1D and 2D cases indicate that the a-priori $hp$-FEM strategy results in a significant reduction of the pre-asymptotic phase in both $h$-FE and $p$-FE. The resulting non-linear matrix eigenvalue problem is solved by SLEPc’s state-of-the-art implementation of the nonlinear eigenvalue solver NLEIGS. This results in fast and highly accurate computations of resonances for metal-dielectric resonators.

\section*{Acknowledgments}
Juan C. Ara\'ujo and Christian Engström gratefully acknowledge the support of the Swedish Research Council under Grant No. 621-2012-3863. Carmen Campos and Jose E.~Roman were supported by the Spanish Agencia Estatal de Investigaci{\'o}n (AEI) under project SLEPc-HS (TIN2016-75985-P), which includes European Commission ERDF funds. The supercomputer Tirant 3 used in some of the computational experiments belongs to Universitat de Val\`encia. Juan C. Ara\'ujo acknowledges Andr\'ee Falgin Hultgren, for his contributions on the meshing routine.

\bibliographystyle{unsrt} 

\begin{thebibliography}{10}

\bibitem{Schuller2010193}
J.A. Schuller, E.S. Barnard, W.~Cai, Y.C. Jun, J.S. White, and M.L. Brongersma.
\newblock Plasmonics for extreme light concentration and manipulation.
\newblock {\em Nature Materials}, 9(3):193--204, 2010.
\newblock cited By 2459.

\bibitem{Jorgensen2016}
J.T. J{\o}rgensen, K.~Norregaard, P.~Tian, P.M. Bendix, A.~Kjaer, and L.B.
  Oddershede.
\newblock Single particle and pet-based platform for identifying optimal
  plasmonic nano-heaters for photothermal cancer therapy.
\newblock {\em Scientific Reports}, 6, 2016.

\bibitem{Cessenat1996}
M.~Cessenat.
\newblock {\em Mathematical Methods in Electromagnetism}.
\newblock Series on Advances in Mathematics for Applied Sciences --- Vol. 41.
  World Scientific Publisher, Singapore, 1996.

\bibitem{Engstrom2018442}
C.~Engstr{\"o}m and A.~Torshage.
\newblock Accumulation of complex eigenvalues of a class of analytic operator
  functions.
\newblock {\em Journal of Functional Analysis}, 275(2):442--477, 2018.

\bibitem{Lesina201510481}
A.C. Lesina, A.~Vaccari, P.~Berini, and L.~Ramunno.
\newblock On the convergence and accuracy of the {FDTD} method for
  nanoplasmonics.
\newblock {\em Optics Express}, 23(8):10481--10497, 2015.

\bibitem{Hoffmann2009}
J.~Hoffmann, C.~Hafnera, P.~Leidenbergera, J.~Hesselbartha, and S.~Burgerb.
\newblock Comparison of electromagnetic field solvers for the 3d analysis of
  plasmonic nano antennas.
\newblock volume 7390, 2009.

\bibitem{lecture+zworski}
M.~Zworski and Semyon D.
\newblock Mathematical theory of scattering resonances, version $0.9$.
\newblock 2018.

\bibitem{MR1350074}
R.~B. Melrose.
\newblock {\em Geometric scattering theory}.
\newblock Stanford Lectures. Cambridge University Press, Cambridge, 1995.

\bibitem{lenoir92}
M.~Lenoir, M.~Vullierme-Ledard, and C.~Hazard.
\newblock Variational formulations for the determination of resonant states in
  scattering problems.
\newblock {\em SIAM J. Math. Anal.}, 23(3):579--608, 1992.

\bibitem{araujo+engstrom+jarlebring+2017}
J.~C.~Araujo C., C.~Engström, and E.~Jarlebring.
\newblock Efficient resonance computations for {Helmholtz} problems based on a
  {Dirichlet-to-Neumann} map.
\newblock {\em Journal of Computational and Applied Mathematics}, 330:177 --
  192, 2018.

\bibitem{kim09}
S.~Kim and J.~E. Pasciak.
\newblock The computation of resonances in open systems using a perfectly
  matched layer.
\newblock {\em Math. Comp.}, 78(267):1375--1398, 2009.

\bibitem{gopal08}
J.~Gopalakrishnan, S.~Moskow, and F.~Santosa.
\newblock Asymptotic and numerical techniques for resonances of thin photonic
  structures.
\newblock {\em SIAM Journal of Applied Mathematics}, 69(1):37--63, 2008.

\bibitem{Berenger1994}
J.~P. Berenger.
\newblock A perfectly matched layer for the absorption of electromagnetic
  waves.
\newblock {\em Journal of Computational Physics}, 114:185--200, 1994.

\bibitem{MR1361167}
P.~D. Hislop and I.~M. Sigal.
\newblock {\em Introduction to spectral theory}, volume 113 of {\em Applied
  Mathematical Sciences}.
\newblock Springer-Verlag, New York, 1996.
\newblock With applications to Schr\"{o}dinger operators.

\bibitem{araujo+engstrom+2017}
J.~C.~Araujo C. and C.~Engström.
\newblock On spurious solutions in finite element approximations of resonances
  in open systems.
\newblock {\em Computers \& Mathematics with Applications}, 74(10):2385 --
  2402, 2017.

\bibitem{Kressner2009}
D.~Kressner.
\newblock A block {N}ewton method for nonlinear eigenvalue problems.
\newblock {\em Numer. Math.}, 114(2):355--372, 2009.

\bibitem{Jarl2012}
E.~Jarlebring, W.~Michiels, and K.~Meerbergen.
\newblock A linear eigenvalue algorithm for the nonlinear eigenvalue problem.
\newblock {\em Numerische Mathematik}, 122(1):169--195, 2012.

\bibitem{guttel17}
S.~G{\"u}ttel and F.~Tisseur.
\newblock The nonlinear eigenvalue problem.
\newblock {\em Acta Numerica}, 26:1--94, 2017.

\bibitem{slepc05+roman}
V.~Hernandez, J.~E. Roman, and V.~Vidal.
\newblock {SLEPc}: A scalable and flexible toolkit for the solution of
  eigenvalue problems.
\newblock {\em {ACM} Trans. Math. Software}, 31(3):351--362, 2005.

\bibitem{schenk2011optimization}
F.~Schenk.
\newblock {\em Optimization of Resonances for Multilayer X-ray Resonators}.
\newblock G{\"o}ttingen series in x-ray physics. Univ.-Verlag G{\"o}ttingen,
  2011.

\bibitem{lassas1998b}
M.~Lassas and E.~Somersalo.
\newblock On the existence and convergence of the solution of {PML} equations.
\newblock {\em Computing}, 60(3):229--241, Sep 1998.

\bibitem{ihl98}
F.~Ihlenburg.
\newblock {\em Finite element analysis of acoustic scattering}.
\newblock Applied mathematical sciences. Springer, New York, 1998.

\bibitem{thompson94}
L.~Thompson and P.~M. Pinsky.
\newblock Complex wavenumber {Fourier} analysis of the p-version finite element
  method.
\newblock {\em Computational Mechanics}, 13(4):255--275, Jul 1994.

\bibitem{Ainsworth04}
M.~Ainsworth.
\newblock Discrete dispersion relation for hp-version finite element
  approximation at high wave number.
\newblock {\em SIAM Journal on Numerical Analysis}, 42(2):553--575, 2005.

\bibitem{MR3094621}
W.~D\"{o}rfler and S.~Sauter.
\newblock A posteriori error estimation for highly indefinite {H}elmholtz
  problems.
\newblock {\em Comput. Methods Appl. Math.}, 13(3):333--347, 2013.

\bibitem{Sauter10}
S.~Sauter.
\newblock $hp$-finite elements for elliptic eigenvalue problems: Error
  estimates which are explicit with respect to $\lambda$, $h$, and $p$.
\newblock 48(1):95--108, 2010.

\bibitem{Giani2016}
S.~Giani, L.~Grubi{{s}}i{\'{c}}, A.~Mi{e}dlar, and J.~S. Ovall.
\newblock Robust error estimates for approximations of non-self-adjoint
  eigenvalue problems.
\newblock {\em Numerische Mathematik}, 133(3):471--495, Jul 2016.

\bibitem{MR3529053}
C.~Engstr\"{o}m, S.~Giani, and L.~Grubi\v{s}i\'{c}.
\newblock Efficient and reliable hp-{FEM} estimates for quadratic eigenvalue
  problems and photonic crystal applications.
\newblock {\em Comput. Math. Appl.}, 72(4):952--973, 2016.

\bibitem{chau15}
T.~Frelet.
\newblock {\em {Finite element approximation of Helmholtz problems with
  application to seismic wave propagation}}.
\newblock Theses, {INSA de Rouen}, December 2015.

\bibitem{olver_DLMF}
{\it NIST Digital Library of Mathematical Functions}.
\newblock http://dlmf.nist.gov/, Release 1.0.20 of 2018-09-15.
\newblock F.~W.~J. Olver, A.~B. {Olde Daalhuis}, D.~W. Lozier, B.~I. Schneider,
  R.~F. Boisvert, C.~W. Clark, B.~R. Miller and B.~V. Saunders, eds.

\bibitem{olver54}
F.~W.~J. Olver and Bullard~E. C.
\newblock The asymptotic expansion of {Bessel} functions of large order.
\newblock {\em Philosophical Transactions of the Royal Society of London A:
  Mathematical, Physical and Engineering Sciences}, 247(930):328--368, 1954.

\bibitem{olver_new}
F.~W.~J. Olver.
\newblock Some new asymptotic expansions for {B}essel functions of large
  orders.
\newblock {\em Proc. Cambridge Philos. Soc.}, 48(3):414–427, 1952.

\bibitem{grad07}
I.~S. Gradshteyn and I.~M. Ryzhik.
\newblock {\em Table of integrals, series, and products}.
\newblock Elsevier/Academic Press, Amsterdam, seventh edition, 2007.
\newblock Translated from the Russian, Translation edited and with a preface by
  Alan Jeffrey and Daniel Zwillinger, With one CD-ROM (Windows, Macintosh and
  UNIX).

\bibitem{abramowitz+stegun}
M.~Abramowitz and I.~A. Stegun.
\newblock {\em Handbook of Mathematical Functions with Formulas, Graphs, and
  Mathematical Tables}.
\newblock Dover, New York, ninth dover printing, tenth gpo printing edition,
  1964.

\bibitem{AinsworthMaxwell04}
M.~Ainsworth.
\newblock {Dispersive properties of high-order N\'ed\'elec/edge element
  approximation of the time-harmonic Maxwell equations}.
\newblock {\em Philos. Trans. R. Soc. A-Math. Phys. Eng. Sci.}, 362(1816),
  2004.

\bibitem{Babuska92}
I.~Babu\v{s}ka and B.~Q. Guo.
\newblock The h, p and h-p version of the finite element method: Basis theory
  and applications.
\newblock {\em Adv. Eng. Softw.}, 15(3-4):159--174, November 1992.

\bibitem{Schwab1998}
C.~Schwab.
\newblock {\em p- and hp- Finite Element Methods: Theory and Applications in
  Solid and Fluid Mechanics}.
\newblock Oxford University Press, 1998.

\bibitem{bangerth09}
W.~Bangerth and O.~Kayser-Herold.
\newblock Data structures and requirements for hp finite element software.
\newblock {\em ACM Trans. Math. Softw.}, 36(1):4:1--4:31, March 2009.

\bibitem{dealII82}
W.~Bangerth, T.~Heister, L.~Heltai, G.~Kanschat, M.~Kronbichler, M.~Maier,
  B.~Turcksin, and T.~D. Young.
\newblock The \texttt{deal.II} library, version 8.2.
\newblock {\em Archive of Numerical Software}, 3, 2015.

\bibitem{solin04}
P.~Solin, K.~Segeth, and I.~Dolezel.
\newblock {\em Higher-order finite element methods}.
\newblock Studies in advanced mathematics. Chapman \&Hall/CRC, Boca Raton,
  London, 2004.

\bibitem{petsc-efficient}
S.~Balay, W.~D. Gropp, L.~C. McInnes, and B.~F. Smith.
\newblock Efficient management of parallelism in object oriented numerical
  software libraries.
\newblock In E.~Arge, A.~M. Bruaset, and H.~P. Langtangen, editors, {\em Modern
  Software Tools in Scientific Computing}, pages 163--202. Birkh{\"{a}}user
  Press, 1997.

\bibitem{campos18}
C.~Campos and J.~E. Roman.
\newblock {NEP}: a module for the parallel solution of nonlinear eigenvalue
  problems in {SLEPc}.
\newblock Submitted, 2019.

\bibitem{guttel14}
S.~G{\"u}ttel, R.~van Beeumen, K.~Meerbergen, and W.~Michiels.
\newblock {NLEIGS}: A class of fully rational {Krylov} methods for nonlinear
  eigenvalue problems.
\newblock {\em SIAM Journal on Scientific Computing}, 36(6):A2842--A2864, 2014.

\bibitem{stewart01}
G.~W. Stewart.
\newblock A {Krylov--Schur} algorithm for large eigenproblems.
\newblock {\em SIAM Journal on Matrix Analysis and Applications},
  23(3):601--614, 2001.

\bibitem{campos16}
C.~Campos and J.~E. Roman.
\newblock Parallel {Krylov} solvers for the polynomial eigenvalue problem in
  {SLEPc}.
\newblock {\em SIAM Journal on Scientific Computing}, 38(5):S385--S411, 2016.

\bibitem{Rakic98}
A.~D. Raki\'{c}, A.~B. Djuri\v{s}i\'{c}, J.~M. Elazar, and M.~L. Majewski.
\newblock Optical properties of metallic films for vertical-cavity
  optoelectronic devices.
\newblock {\em Appl. Opt.}, 37(22):5271--5283, Aug 1998.

\bibitem{Yau98}
A.~Ben-Israel L.~Yau.
\newblock The {Newton} and {Halley} methods for complex roots.
\newblock {\em The American Mathematical Monthly}, 105(9):806--818, 1998.

\bibitem{johnson93}
B.~R. Johnson.
\newblock Theory of morphology-dependent resonances: shape resonances and width
  formulas.
\newblock {\em J. Opt. Soc. Am. A}, 10(2):343--352, Feb 1993.

\end{thebibliography}

\end{document}